\definecolor{webgreen}{rgb}{0,.5,0}
\definecolor{webbrown}{rgb}{.6,0,0}
\newcommand{\seqnum}[1]{\href{http://oeis.org/#1}{\underline{#1}}}
\theoremstyle{plain}
\newtheorem{theorem}{Theorem}
\newtheorem{lemma}[theorem]{Lemma}
\newtheorem{proposition}[theorem]{Proposition}
\theoremstyle{remark}
\newcommand{\ress}[2]{{\rm{Res}}\big(#1,#2\big)}
\newcommand{\dis}[1]{{\rm{Disc}}\big(#1\big)}
\newcommand{\Ft}[1]{\mathcal{F}_{#1}}
\newcommand{\Lt}[1]{\mathcal{L}_{#1}}
\newcommand{\lc}{{\rm lc}}
\begin{document}
	
\title[The resultant, the discriminant, and the derivative of generalized Fibonacci polynomials]
       {The resultant, the discriminant, and the derivative of generalized Fibonacci polynomials}

\author[Rigoberto Fl\'orez]{Rigoberto Fl\'orez }
\address{Department of Mathematical Sciences\\ The Citadel\\ Charleston, SC \\U.S.A}
\email{rigo.florez@citadel.edu}
\thanks{}

\author[Robinson Higuita]{Robinson Higuita }
\address{Instituto de Matem\'aticas\\ Universidad de Antioquia\\ Medell\'in\\ Colombia}
\email{robinson.higuita@udea.edu.co}
\thanks{}

\author[Alexander  Ram\'irez]{Alexander  Ram\'irez}
\address{Instituto de Matem\'aticas\\ Universidad de Antioquia\\ Medell\'in\\ Colombia}
\email{jalexander.ramirez@udea.edu.co}

\begin{abstract}
A second order polynomial sequence is of \emph{Fibonacci-type} (\emph{Lucas-type}) if its Binet formula
has a structure similar to that for Fibonacci (Lucas) numbers. Known examples of these type of sequences are:
Fibonacci polynomials, Pell polynomials, Fermat polynomials, Chebyshev polynomials, Morgan-Voyce polynomials, 
Lucas polynomials, Pell-Lucas polynomials, Fermat-Lucas polynomials, Chebyshev polynomials.

The \emph{resultant} of two polynomials is the determinant of the Sylvester matrix and
the \emph{discriminant} of a polynomial $p$ is the resultant of  $p$ and its derivative. We study the resultant, 
 the discriminant, and the derivatives of Fibonacci-type polynomials and Lucas-type polynomials as well combinations of those two types. 
 As a corollary we give explicit formulas for the resultant,  the discriminant, and the derivative for the known polynomials mentioned above. 
\end{abstract}

\keywords{Resultant,  Discriminant, Derivative, Fibonacci polynomials, Lucas polynomials, polynomial sequences.}

\maketitle

\section{Introduction}
	
A second order polynomial sequence is of \emph{Fibonacci-type} (\emph{Lucas-type}) if its Binet formula
has a structure similar to that for Fibonacci (Lucas) numbers. Those are known as \emph{generalized Fibonacci polynomials}  GFP
(see for example \cite{Richard, florezHiguitaMuk2018, florezHiguitaMuk:StarD, hoggatt, HoggattLong}).
Some known examples are: Pell polynomials, Fermat polynomials, Chebyshev polynomials,
Morgan-Voyce polynomials, Lucas polynomials, Pell-Lucas polynomials, Fermat-Lucas polynomials, Chebyshev polynomials, Vieta and Vieta-Lucas polynomials.

The \emph{resultant}, $\rm{Res}(.,.)$, of two polynomials is the determinant of the Sylvester matrix (see \eqref{Syl},
\cite{Akritas, Basu,Gelfand:Discriminants, Serge:algebra} or \cite[p. 426]{Sylvester}).
It is very often in mathematics that we ask the question whether or not two polynomials share a root. In particular, if $p$ and $q$ are two GFPs, we ask the question
whether or not $p$ and $q$ have a common root.  Since the resultant of $p$ and $q$ is also the product of $p$ evaluated
at each root of $q$, the resultant of two GFPs can be used to answer the question. The question can be also answered
finding the greatest common divisor, $\gcd$, of $p$ and $q$ (recall that $\gcd(p, q)=1\iff \rm{Res}(p,q)\ne 0)$ .  However, in the procedure of finding the $\gcd(p, q)$ using the  Euclidean algorithm,
the coefficients of the remainders grow very fast. Therefore, the resultant can be used to reduce computation time of
finding the $\gcd$ \cite{Akritas}.

Several authors have been interested in the resultant. The first formula for the resultant of two  cyclotomic polynomials
was given by Apostol  \cite{Apostol}. Some other papers have been dedicated to the study of resultant of
Chebyshev Polynomials \cite{Dilcher, Jacobs, Louboutin,Yamagishi}.  In this paper we deduce simple closed formulas for the
resultants of a big family of GFPs.  We find the resultants for Fibonacci-type polynomials, the resultants for Lucas-type and the resultant
of combinations of those two types. In particular we find the resultant for both kinds of Chebyshev Polynomials.

The \emph{discriminant} is the resultant of a polynomial and its derivative. If $p$ is a GFP, we ask the question
whether or not $p$ has a repeated root. The discriminant helps to  answer this question. In this paper we find simple
closed formulas for the discriminant of both types of GFP. In addition we generalize the derivative given by 
Falc\'on and Plaza \cite{Falcon} for Fibonacci polynomials to GFP.

Note that the resultant has been used to  the solve systems of polynomial equations (it encapsulates the solutions)  \cite{Basu, Kauers, Lewis,  stiller}.
The resultant can also be used in combination with elimination theory to answer other different types of questions about the multivariable polynomials.
However, in this paper we are not are interested in these types of questions.

 \section{Basic definitions}

In this section we summarize some concepts given by the authors in earlier articles. For example, the authors \cite{florezHiguitaMuk2018}
have studied the polynomial sequences given here. Throughout the paper we consider polynomials in $\mathbb{Q}[x]$.  The polynomials
in this subsection are presented in a formal way.    However, for brevity and if there is no ambiguity after this subsection we avoid these formalities. Thus, we present the polynomials without explicit use of ``$x$".

\subsection{Second order polynomial sequences}
In this section we introduce the generalized Fibonacci polynomial sequences. This definition gives rise to some known
polynomial sequences (see for example, Table \ref{familiarfibonacci} or \cite{florezHiguitaMuk2018,florezHiguitaMuk:StarD,HoggattLong, koshy}).

For the remaining part of this section we reproduce the definitions by Fl\'orez et. al.
\cite{florezHiguitaMuk2018, florezHiguitaMuk:StarD} for generalized Fibonacci polynomials.
We now give the two second order polynomial recurrence relations in which we divide the generalized Fibonacci polynomials (GFP).
\begin{equation}\label{Fibonacci;general:FT}
\Ft{0} (x)=0, \; \Ft{1}(x)= 1,\;  \text{and} \;  \Ft{n}(x)= d(x) \Ft{n - 1}(x) + g(x) \Ft{n - 2}(x) \text{ for } n\ge 2
\end{equation}
where $d(x)$, and $g(x)$ are fixed non-zero polynomials in $\mathbb{Q}[x]$.

We say a polynomial recurrence relation is of \emph{Fibonacci-type} if it satisfies the relation given in
\eqref{Fibonacci;general:FT}, and of \emph{ Lucas-type} if:
\begin{equation}\label{Fibonacci;general:LT}
\Lt{0}(x)=p_{0}, \; \Lt{1}(x)= p_{1}(x),\;  \text{and} \;  \Lt{n}(x)= d(x) \Lt{n - 1}(x) + g(x) \Lt{n - 2}(x) \text{ for } n\ge 2
\end{equation}
where $|p_{0}|=1$ or $2$ and $p_{1}(x)$, $d(x)=\alpha p_{1}(x)$, and $g(x)$ are fixed non-zero  polynomials in $\mathbb{Q}[x]$ with
$\alpha$ an integer of the form $2/p_{0}$.

To use similar notation for \eqref{Fibonacci;general:FT} and \eqref{Fibonacci;general:LT} on certain occasions we write
$p_{0}=0$, $p_{1}(x)=1$ to indicate the initial conditions of Fibonacci-type polynomials. Some known examples
of Fibonacci-type polynomials and  Lucas-type polynomials are in Table \ref{familiarfibonacci} or in
\cite{florezHiguitaMuk2018,HoggattLong,Pell, Fermat,  koshy}.

If $G_{n}$ is either $\Ft{n}$ or $\Lt{n}$ for all $n\ge 0$ and $d^2(x)+4g(x)> 0$ then the explicit formula for the recurrence relations in
 \eqref{Fibonacci;general:FT} and \eqref{Fibonacci;general:LT}  is given by
\begin{equation*}
 G_{n}(x) = t_1 a^{n}(x) + t_2 b^{n}(x)
\end{equation*}
where $a(x)$ and $b(x)$ are the solutions of the quadratic equation associated with the second order
recurrence relation $G_{n}(x)$. That is,  $a(x)$ and $b(x)$ are the solutions of $z^2-d(x)z-g(x)=0$.
If $\alpha=2/p_{0}$, then the Binet formula for Fibonacci-type polynomials is stated in  \eqref{bineformulauno} and the Binet formula
for  Lucas-type polynomials is stated in \eqref{bineformulados}
(for details on the construction of the two Binet formulas see \cite{florezHiguitaMuk2018})
\begin{equation}\label{bineformulauno}
\Ft{n}(x) = \dfrac{a^{n}(x)-b^{n}(x)}{a(x)-b(x)}
\end{equation}
and
\begin{equation}\label{bineformulados}
\Lt{n}(x)=\dfrac{a^{n}(x)+b^{n}(x)}{\alpha}.
\end{equation}

Note that for both types of sequence: $$a(x)+b(x)=d(x), \quad a(x)b(x)= -g(x), \quad \text{ and } \quad a(x)-b(x)=\sqrt{d^2(x)+4g(x)}$$
where $d(x)$ and $g(x)$ are the polynomials defined in \eqref{Fibonacci;general:FT} and \eqref{Fibonacci;general:LT}.

A sequence of  Lucas-type (Fibonacci-type) is \emph{equivalent} or \emph{conjugate} to a sequence of Fibonacci-type (Lucas-type),
if their recursive sequences are determined by the same polynomials $d(x)$ and $g(x)$. Notice that two equivalent polynomials
have the same $a(x)$ and $b(x)$ in their Binet representations. In \cite{florezHiguitaMuk2018, florezHiguitaMuk:StarD}
there are examples of some known equivalent polynomials with their Binet formulas.  The  polynomials in Table \ref{familiarfibonacci} are
organized by pairs of equivalent polynomials.  For instance,  Fibonacci and Lucas, Pell and Pell-Lucas, and so on.

We use $\deg(P)$ and $\lc(P)$ to mean the degree of $P$ and the leading  coefficient of a polynomial $P$.
Most of the following conditions were required in the papers that we are citing. Therefore, we requiere here that
$\gcd(d(x), g(x))=1$ and $\deg(g(x))< \deg(d(x))$ for both type of sequences
 and  that conditions in \eqref{extra:condition} also hold for Lucas types polynomials;
 \begin{equation}\label{extra:condition}
 \gcd(p_{0}, p_{1}(x))=1,  \gcd(p_{0}, d(x))=1, \gcd(p_{0}, g(x))=1,  \text{ and  that degree of } \Lt{1} \ge1.
 \end{equation}

Notice that in the definition of Pell-Lucas we have
$Q_{0}(x)=2$ and $Q_{1}(x)=2x$. Thus, the $\gcd(2,2x)=2\ne 1$.
Therefore, Pell-Lucas does not satisfy the extra conditions that we imposed in \eqref{extra:condition}. So,
to resolve this inconsistency we use $Q^{\prime}_{n}(x)=Q_{n}(x)/2$ instead of $Q_{n}(x)$.
	
	\begin{table}[!ht]
		\begin{center}\scalebox{0.8}{
				\begin{tabular}{|l|l|l|l|} \hline
					Polynomial                      & Initial value     & Initial value	& Recursive Formula 						       \\	
					&$G_0(x)=p_0(x)$  	      &$G_1(x)=p_1(x)$	&$G_{n}(x)= d(x) G_{n - 1}(x) + g(x) G_{n - 2}(x)$ 	   \\  \hline   \hline
					Fibonacci             	      & $0$	     &$1$      	&$F_{n}(x) = x F_{n - 1}(x) + F_{n - 2}(x)$	 	       \\
					Lucas 	             	      &$2$	     & $x$ 	 	&$D_n(x)= x D_{n - 1}(x) + D_{n - 2}(x)$                \\ 						
					Pell			    	      &$0$	     & $1$           &$P_n(x)= 2x P_{n - 1}(x) + P_{n - 2}(x)$               \\
					Pell-Lucas 	    	      &$2$	     &$2x$          &$Q_n(x)= 2x Q_{n - 1}(x) + Q_{n - 2}(x)$               \\
					Pell-Lucas-prime 	      &$1$	     &$x$       	&$Q_n^{\prime}(x)= 2x Q_{n - 1}^{\prime}(x) + Q_{n - 2}^{\prime}(x)$               \\
					Fermat  	                       &$0$	     & $1$      	&$\Phi_n(x)= 3x\Phi_{n-1}(x)-2\Phi_{n-2}(x) $           \\
					Fermat-Lucas	              &$2$	     &$3x$  	&$\vartheta_n(x)=3x\vartheta_{n-1}(x)-2\vartheta_{n-2}(x)$\\
					Chebyshev second kind &$0$	     &$1$       	&$U_n(x)= 2x U_{n-1}(x)-U_{n-2}(x)$  	 	              \\
					Chebyshev first kind       &$1$	     &$x$       	&$T_n(x)= 2x T_{n-1}(x)-T_{n-2}(x)$  \\	 	
					Morgan-Voyce	              &$0$	     &$1$      	&$B_n(x)= (x+2) B_{n-1}(x)-B_{n-2}(x) $  	 	         \\
					Morgan-Voyce 	              &$2$	     &$x+2$       &$C_n(x)= (x+2) C_{n-1}(x)-C_{n-2}(x)$  	 	         \\
					Vieta 		              &$0$ 	     &$1$	        &$V_n(x)=x V_{n-1}(x)-V_{n-2}(x)$ 	    \\
					Vieta-Lucas 		      &$2$ 	     &$x$	        &$v_n(x)=x v_{n-1}(x)-v_{n-2}(x)$      \\
					\hline
			\end{tabular}}
		\end{center}
		\caption{Recurrence relation of some GFP.} \label{familiarfibonacci}
	\end{table}
The familiar examples in Table \ref{familiarfibonacci} satisfy that $\deg\left(d \right)>\deg\left(g \right)$. Therefore, for the rest
of the paper we suppose that $\deg\left(d \right)>\deg\left(g \right)$.

 \subsection{The resultant and the discriminant }
 In this section we use the Sylvester  matrix to define the discriminant of two polynomials.
Let $P$ and $Q$ be non-zero polynomials of degree $n$ and $m$ in $\mathbb{Q}[x]$ with
$$P=a_{n}x^n+ a_{n-1}x^{n-1}+ \dots +a_{1}x+a_{0}  \text{ and  } Q=b_{m}x^m+ b_{m-1}x^{m-1}+ \dots +b_{1}x+b_{0}.$$
The square matrix of  size  $n+m$ given in \eqref{Syl} is called the Sylvester matrix  associated to $P$ and $Q$.  
It is denoted by $\rm{Syl}(P,Q)$.  The resultant of  two polynomials is defined using the Sylvester determinant 
(see for example \cite{Akritas,Basu,Jacobs,Sylvester}). Thus, 
the \emph{resultant} of $P$ and $Q$ denoted by $\ress{P}{Q}$ is the determinant of $\rm{Syl}(P,Q)$.

\begin{equation} \label{Syl}
\rm{Syl}(P,Q) =  \left[ {\begin{array}{cccccccc}
   a_{n}   & a_{n-1}&\cdots  &\cdots &a_{0}   &0	         & \cdots  & 0       \\
   0         & a_{n}   &a_{n-1} &\cdots &\cdots  &a_{0}   & \cdots  & 0       \\
   \vdots & \ddots  &\dots    &\dots    &\dots   &\ddots  & \ddots  & \vdots\\
   \vdots & \ddots  &0          &a_{n}&\cdots  &		 & \cdots  & a_{0} \\
   b_{m} & b_{m-1}&\cdots  &\cdots  &b_{0}   &0	         & \cdots  & 0       \\
   0	    & b_{m}   &b_{m-1}&\cdots  &\cdots  &b_{0}   & \cdots  & 0       \\
 \vdots  & \ddots  &\dots     &\dots    &\dots    &\ddots & \ddots  & \vdots\\
   0 	    & \ddots  &0           &b_{m}  &b_{m-1}&\cdots & \cdots  & b_{0}\\
  \end{array} } \right].
\end{equation}

The resultant can be also expressed in terms of the roots of the two polynomials.
If $\{x_i\}_{i=1}^n$ and $\{y_i\}_{i=1}^m$ are the roots of $P$ and $Q$ in $\mathbb{C}$, respectively, then
the resultant of $P $ and $Q$ is given by
	$
	\ress{P}{Q}=a_n ^m b_m ^n \prod_{i=1}^n \prod_{j=1}^m(x_{i}-y_{j})=a_{n}^m \prod_{i=1}^n Q(x_i)=b_{m}^n \prod_{j=1}^m P(y_j).
	$
The \emph{discriminant} of  $P$,  \dis{P}, is defined by $(-1)^{n(n-1)/2}a_{n}^{2n-2}\Pi_{i\ne j} (x_i-x_j)$. 
The discriminant can also be written as a Vandermonde determinant or as a resultant (see \cite{Serge:algebra}). In this paper we use the following expression. 
If $a_{n}=\lc(P)$, $n=\deg(P)$ and $P^{\prime}$ is the derivative of $P$, then
$$\dis{P}=(-1)^{\frac{n(n-1)}{2}}a_{n}^{-1}\ress{P}{P^{\prime}}.$$

We now introduce some notation used throughout the paper. Let
\begin{equation}\label{Degree:Lead:Def}
	\beta=\lc(d), \quad \lambda=\lc(g),  \quad \eta = \deg(d ), \quad \omega =\deg(g ), \quad \text{ and } \quad \rho=\ress{g }{d }.
\end{equation}
	
 \section{Main Results }\label{Main:Results}

We recall that for brevity throughout the paper we present  the polynomials without the ``$x$".
For example, instead of $\Ft{n}(x)$ and $\Lt{n}(x)$ we use $\Ft{n}$ and $\Lt{n}$. This section
presents the main results in this paper. However, their proofs are not given in this section but they will be given
throughout the paper. We give simple expressions for the resultant of two polynomials of Fibonacci-type,
$\ress{\Ft{n}}{\Ft{m}}$, the resultant of two polynomials of Lucas-type, $\ress{\Lt{n}}{\Lt{m}}$, and the
resultant of two equivalent polynomials (Lucas-type and Fibonacci-type),
$\ress{\Lt{n}}{\Ft{m}}$.  As an application of those theorems we give the discriminants of GFPs.
Finally we construct several tables with the resultant and discriminant of
the known polynomials as corollaries of the main results.

We use the notation $E_{2}(n)$ \label{adic:order}  to represent the \emph{integer exponent base two} of a
positive integer $n$  which is defined to be the largest integer $k$ such that $2^{k}\mid n$
(this concept is also known as the \emph{2}-\emph{adic order} or \emph{2}-\emph{adic valuation} of \emph{n}).

For proof of Theorem \ref{Resultants:Ft} see Subsection \ref{Proof:Resultants:Main:FT} on page \pageref{Proof:Resultants:Main:FT}.
For proof of Theorem \ref{main:2:lucas} see Subsection \ref{Proof:Resultants:Main:LT} on page \pageref{Proof:Resultants:Main:LT}.
For proof of Theorem \ref{Main:3:thm} see Subsection \ref{Proof:Resultants:LFT} on page \pageref{Proof:Resultants:LFT}.
For proof of Theorems \ref{Disc:Fibonacci:type} and  \ref{Disc:Lucas:type} see Subsection \ref{Proof:Disc:FT} on page \pageref{Proof:Disc:FT}.

\begin{theorem}\label{Resultants:Ft}
Let $T_{\Ft{}}= \left((-1)^{\eta\omega}\beta^{2\eta-\omega}\rho\right)^{\frac{(n-1)(m-1)}{2}}$
where  $n, m\in \mathbb{Z}_{>0}$. Then
		\[ \ress{\Ft{n}}{\Ft{m}}= 
		\begin{cases} 0 & \mbox{if }\;   \gcd(m,n)>1;\\
		T_{\Ft{}} & \text{otherwise.}
		\end{cases}
		\]
\end{theorem}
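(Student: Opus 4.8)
The plan is to split into two cases according to whether $\gcd(m,n)>1$ or $\gcd(m,n)=1$; the first case is soft, and for the second I would run an induction on $n+m$ modeled on the Euclidean algorithm applied to the indices. Two preliminary facts are needed. First, since $\omega=\deg g<\deg d=\eta$, an induction on the recurrence \eqref{Fibonacci;general:FT} shows $\deg\Ft{n}=(n-1)\eta$ and $\lc(\Ft{n})=\beta^{\,n-1}$; moreover, evaluating the recurrence at a root $\xi$ of $g$ gives $\Ft{n}(\xi)=d(\xi)^{\,n-1}$. Second, the Binet formula \eqref{bineformulauno} together with $ab=-g$ yields the addition law $\Ft{M+n}=\Ft{M+1}\Ft{n}+g\,\Ft{M}\,\Ft{n-1}$; iterating it gives the polynomial congruence $\Ft{qn+r}\equiv (g\,\Ft{n-1})^{q}\,\Ft{r}\pmod{\Ft{n}}$ for every $q\ge0$ and $0\le r<n$.

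For $\gcd(m,n)=\delta>1$: the congruence with $r=0$ (recall $\Ft{0}=0$) gives $\Ft{n}\mid\Ft{qn}$, hence $\Ft{\delta}\mid\Ft{n}$ and $\Ft{\delta}\mid\Ft{m}$. Since $\delta\ge2$, the polynomial $\Ft{\delta}$ is non-constant ($\deg\Ft{\delta}=(\delta-1)\eta\ge1$), so $\Ft{n}$ and $\Ft{m}$ share a non-constant common factor and therefore $\ress{\Ft{n}}{\Ft{m}}=0$, as claimed.

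For $\gcd(m,n)=1$ I would induct on $n+m$. The base case $\min(n,m)=1$ is immediate, since $\ress{1}{\Ft{m}}=1$ and the exponent $(n-1)(m-1)/2$ in $T_{\Ft{}}$ vanishes. For the inductive step, $n$ and $m$ being coprime and $\ge2$ are distinct, so assume $m>n\ge2$ and write $m=qn+r$ with $1\le r\le n-1$; then $\gcd(n,r)=\gcd(n,m)=1$, $\gcd(n,n-1)=1$, and the pairs $(n,n-1)$ and $(n,r)$ have index-sum $<n+m$. Using the root--product form of the resultant, its multiplicativity, and the congruence above to substitute $\Ft{m}(\xi)=(g(\xi)\Ft{n-1}(\xi))^{q}\Ft{r}(\xi)$ at every root $\xi$ of $\Ft{n}$, one obtains
\[
\ress{\Ft{n}}{\Ft{m}}=\lc(\Ft{n})^{E}\;\ress{\Ft{n}}{g}^{\,q}\;\ress{\Ft{n}}{\Ft{n-1}}^{\,q}\;\ress{\Ft{n}}{\Ft{r}},\qquad E=\deg\Ft{m}-q\deg g-q\deg\Ft{n-1}-\deg\Ft{r}.
\]
Here $\ress{\Ft{n}}{g}$ is evaluated directly: from $\Ft{n}(\xi)=d(\xi)^{\,n-1}$ at the roots of $g$ one gets $\ress{g}{\Ft{n}}=\rho^{\,n-1}$, hence $\ress{\Ft{n}}{g}=(-1)^{(n-1)\eta\omega}\rho^{\,n-1}$; and $\ress{\Ft{n}}{\Ft{n-1}}$, $\ress{\Ft{n}}{\Ft{r}}$ are replaced using the inductive hypothesis by $\big((-1)^{\eta\omega}\beta^{2\eta-\omega}\rho\big)^{(n-1)(n-2)/2}$ and $\big((-1)^{\eta\omega}\beta^{2\eta-\omega}\rho\big)^{(n-1)(r-1)/2}$.

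The step I expect to be the main obstacle is the bookkeeping that follows: substitute these values into the displayed identity and collect, separately, the exponents of $-1$, of $\beta$, and of $\rho$, checking that each equals $(n-1)(m-1)/2$ times the respective base. The collapse rests on the arithmetic identity $2q+(n-2)q+(r-1)=nq+r-1=m-1$ (and an analogous computation that produces the factor $2\eta-\omega$ in the exponent of $\beta$); for example the exponent of $\rho$ is $(n-1)q+\tfrac{(n-1)(n-2)q}{2}+\tfrac{(n-1)(r-1)}{2}=\tfrac{(n-1)(m-1)}{2}$, and the sign works out to $(-1)^{\eta\omega(n-1)(m-1)/2}$ --- here it is essential to carry the extra sign in $\ress{\Ft{n}}{g}=(-1)^{(n-1)\eta\omega}\rho^{\,n-1}$ rather than the cleaner $\ress{g}{\Ft{n}}=\rho^{\,n-1}$, and to use that $(n-1)(m-1)$ is even whenever $\gcd(m,n)=1$ so that the exponent is a genuine integer. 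This gives $\ress{\Ft{n}}{\Ft{m}}=\big((-1)^{\eta\omega}\beta^{2\eta-\omega}\rho\big)^{(n-1)(m-1)/2}=T_{\Ft{}}$, completing the induction. (An alternative I would keep in reserve factors $\Ft{n}$ over $\mathbb C$ as a product of the quadratics $d^{2}+4g\cos^{2}(k\pi/n)$ --- a Chebyshev-type factorization --- and computes the resultant factor by factor, at the cost of then proving the identity $\prod_{k,l}\big(4\cos^{2}(l\pi/m)-4\cos^{2}(k\pi/n)\big)=\pm1$ for coprime $m,n$.)
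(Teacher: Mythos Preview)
Your argument is correct and follows the same Euclidean-reduction strategy as the paper, but your choice of reduction identity is a genuine simplification. The paper reduces via Proposition~\ref{main:lemma}, writing $\Ft{m}=\Ft{n}T+g\,\Ft{nq-1}\Ft{r}$ when $m=nq+r$; this introduces the factor $\ress{\Ft{n}}{\Ft{nq-1}}$, which the paper must evaluate by a separate induction (Lemma~\ref{lemma:Resultant:FibonaciT}\,\eqref{resultant:m:mq-1}), and then organizes the whole proof as a minimal-counterexample argument. Your iterated addition law gives instead $\Ft{m}\equiv (g\,\Ft{n-1})^{q}\Ft{r}\pmod{\Ft{n}}$, so the only auxiliary resultants that appear are $\ress{\Ft{n}}{g}$, $\ress{\Ft{n}}{\Ft{n-1}}$, and $\ress{\Ft{n}}{\Ft{r}}$, and the latter two already fall under the single induction on $n+m$. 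This collapses Lemma~\ref{lemma:Resultant:FibonaciT}\,\eqref{resultant:m:mq-1} entirely and also lets you handle the case $\gcd(m,n)>1$ directly (via $\Ft{\delta}\mid\Ft{n}$) rather than invoking the external $\gcd$ characterization in Proposition~\ref{second:main:thm}. The bookkeeping you flag is indeed the only delicate point, and your sample check on the $\rho$-exponent, together with the observation that $(n-1)(m-1)$ is even when $\gcd(m,n)=1$, is exactly what is needed; the $\beta$- and sign-exponents collapse by the same identity $2q+(n-2)q+(r-1)=m-1$.
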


\begin{theorem}\label{main:2:lucas}
Let $T_{\Lt{}}=\alpha^{-\eta(n+m)} 2^{\eta \gcd(m,n)}\big((-1)^{\eta\omega}\beta^{2\eta-\omega}\rho\big)^{nm/2}$
where $m, n\in\mathbb{Z}_{>0}$. Then	
	\[ \ress{\Lt{m}}{\Lt{n}}= 
		\begin{cases} 0 &  \mbox{if }E_{2}(n)=E_{2}(m);\\
			   T_{\Lt{}} & \mbox{if }E_{2}(n)\not=E_{2}(m).
		\end{cases}
	\]	
\end{theorem}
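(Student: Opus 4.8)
The plan is to derive a reduction identity for $\ress{\Lt{m}}{\Lt{n}}$ that imitates the Euclidean algorithm on the index pair $(m,n)$, and then induct on $m+n$. Put $K=(-1)^{\eta\omega}\beta^{2\eta-\omega}\rho$, so the claimed value is $T_{\Lt{}}=\alpha^{-(m+n)\eta}2^{\eta\gcd(m,n)}K^{mn/2}$. First I would record the ingredients, all immediate from the Binet formula \eqref{bineformulados}: $\deg\Lt{k}=k\eta$ and $\lc(\Lt{k})=\beta^{k}/\alpha$; the addition formula $\Lt{r+s}=\alpha\Lt{r}\Lt{s}-(-g)^{s}\Lt{r-s}$ for $r\ge s\ge 0$, coming from $a^{r+s}+b^{r+s}=(a^{r}+b^{r})(a^{s}+b^{s})-(ab)^{s}(a^{r-s}+b^{r-s})$ with $a=a(x)$, $b=b(x)$ the roots; the base value $\ress{\Lt{k}}{\Lt{0}}=\ress{\Lt{k}}{p_{0}}=p_{0}^{k\eta}$; and $\ress{\Lt{k}}{g}=(-1)^{k\eta\omega}\alpha^{-\omega}\rho^{k}$, which follows from $\alpha\Lt{k}\equiv d^{k}\pmod g$ (run the recursion modulo $g$), whence $\ress{g}{\Lt{k}}=\ress{g}{\alpha^{-1}d^{k}}=\alpha^{-\omega}\rho^{k}$ using $\rho=\ress{g}{d}\neq0$, followed by $\ress{P}{Q}=(-1)^{\deg P\deg Q}\ress{Q}{P}$.

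For the reduction, use symmetry of the resultant (valid whenever the value is nonzero, vacuous otherwise) to assume $m\le n$; if $m=n$ then $\ress{\Lt{m}}{\Lt{n}}=0$, so take $m<n$. Applying the addition formula with $(r,s)=(n-m,m)$ when $n\ge 2m$, and with $(r,s)=(m,n-m)$ when $m<n<2m$, gives in both cases $\Lt{n}\equiv -(-g)^{t}\Lt{n'}\pmod{\Lt{m}}$, where $(t,n')=(m,n-2m)$ or $(n-m,2m-n)$ and $n'\ge 0$. Since $\ress{\Lt{m}}{\cdot}$ only sees the residue modulo $\Lt{m}$ up to a power of $\lc(\Lt{m})$, multiplicativity of the resultant together with the recorded values collapses all the $\beta$'s, $\rho$'s and $\alpha$'s into
\[
\ress{\Lt{m}}{\Lt{n}}=K^{mt}\,\alpha^{-2t\eta}\,\ress{\Lt{m}}{\Lt{n'}},
\]
with total sign $+1$: the only potentially nontrivial sign is $(-1)^{(t+1)m\eta}$ in the second case, equal to $-1$ only if $m,\eta$ are odd and $t$ is even, which forces $m,n$ both odd, i.e. $E_{2}(m)=E_{2}(n)=0$, excluded in the non-vanishing regime. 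Two invariants accompany the step: $\gcd(m,n')=\gcd(m,n)$, and the sign of $E_{2}(n')-E_{2}(m)$ equals that of $E_{2}(n)-E_{2}(m)$ (a one-line $2$-adic computation from $v_{2}(2m)=E_{2}(m)+1$ and $n'=|n-2m|$).

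Now I would induct on $m+n$. The step strictly decreases $n$, hence $m+n$, and it preserves the greatest common divisor and the comparison above, so iteration terminates at a pair whose greatest common divisor is still $g_{0}:=\gcd(m,n)$: either the two indices coincide---which, since equal indices force equal $2$-adic valuations, happens precisely when $E_{2}(m)=E_{2}(n)$---giving $\ress{\Lt{g_{0}}}{\Lt{g_{0}}}=0$; or one index becomes $0$---which, since $n'=0$ forces unequal valuations, happens precisely when $E_{2}(m)\neq E_{2}(n)$---giving $\ress{\Lt{g_{0}}}{\Lt{0}}=p_{0}^{g_{0}\eta}$. This settles the vanishing clause. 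In the other case, substituting the inductive value $\ress{\Lt{m}}{\Lt{n'}}=\alpha^{-(m+n')\eta}2^{\eta g_{0}}K^{mn'/2}$ (legitimate since the $\gcd$ and the valuation comparison carry over, and $mn'/2\in\mathbb{Z}$ as the two indices are not both odd) into the reduction, the case-by-case trivial identities $2t+n'=n$ and $mt+\tfrac12 mn'=\tfrac12 mn$ give $\ress{\Lt{m}}{\Lt{n}}=2^{\eta g_{0}}K^{mn/2}\alpha^{-2t\eta-(m+n')\eta}=2^{\eta g_{0}}K^{mn/2}\alpha^{-(m+n)\eta}=T_{\Lt{}}$; and at the base, $p_{0}^{g_{0}\eta}=(2/\alpha)^{g_{0}\eta}$ combined with the single factor $K^{mt}\alpha^{-2t\eta}$ reproduces the same formula because $\alpha p_{0}=2$.

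The hard part will be the bookkeeping in the reduction step and the base evaluations: checking that every exponent of $\beta$ and $\rho$ assembles into a power of $K=(-1)^{\eta\omega}\beta^{2\eta-\omega}\rho$, that the $\alpha$-exponents and the factor $p_{0}^{g_{0}\eta}$ combine (via $\alpha p_{0}=2$) to $\alpha^{-(m+n)\eta}2^{\eta\gcd(m,n)}$, and---most delicately---that every residual sign is $+1$ throughout the regime $E_{2}(m)\neq E_{2}(n)$. All of the $E_{2}$-behaviour, and hence the dichotomy in the statement, rests on the small lemma that $n\mapsto|n-2m|$ preserves the comparison between the $2$-adic valuations of the two indices.
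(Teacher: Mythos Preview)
Your argument is correct, and its overall shape---a Euclidean-style reduction on the index pair followed by induction---matches the paper's. The implementations differ in three respects. First, the paper reduces $\Lt{mq+r}$ modulo $\Lt{m}$ in one shot via Proposition~\ref{Dic2} (which splits on the parity of the quotient $q$ and lands on $\Lt{r}$ or $\Lt{m-r}$), whereas you use the single-step addition formula $\Lt{r+s}=\alpha\Lt{r}\Lt{s}-(-g)^{s}\Lt{r-s}$ to pass from $(m,n)$ to $(m,|n-2m|)$; your identity is a one-line consequence of the Binet formula and bypasses the separate proof of Proposition~\ref{Dic2}. Second, the paper argues by minimal counterexample on the set of bad indices, while you induct on $m+n$; these are equivalent logically, but your invariant---that $n\mapsto|n-2m|$ preserves both $\gcd(m,n)$ and the comparison $E_{2}(m)\lessgtr E_{2}(n)$---makes the dichotomy in the statement emerge transparently from the terminal cases (equal indices versus one index equal to $0$). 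Third, the paper first proves the case $p_{0}=2$ (so $\alpha=1$) and then deduces the case $p_{0}=1$ by passing to $\overline{\Lt{n}}=2\Lt{n}$, while you carry $\alpha$ through the whole computation and close the base case via $p_{0}=2/\alpha$. Your route is a little more elementary and uniform; the paper's division-based reduction is a coarser step but leans on a proposition that has to be established separately.
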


\begin{theorem} \label{Main:3:thm}
Let $T_{\Lt{}\Ft{}}=2^{\eta \gcd(m,n)-\eta}\alpha^{\eta(1-m)}\left((-1)^{\eta\omega}\beta^{2\eta-\omega}\rho\right)^{(n(m-1))/2}$
where  $n, m \in \mathbb{Z}_{>0}$. Then
\[
\ress{\Lt{n}}{\Ft{m}}=
		\begin{cases}  0 			      & \mbox{if } E_{2}(n)<E_{2}(m);\\
				      T_{\Lt{}\Ft{}}	      & \mbox{if }  E_{2}(n)\ge E_{2}(m).
		\end{cases}
\]
\end{theorem}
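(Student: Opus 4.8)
The plan is to build on the Binet identity
\[
\Ft{2n}=\alpha\,\Lt{n}\Ft{n},
\]
which is immediate from \eqref{bineformulauno} and \eqref{bineformulados} because $a^{2n}-b^{2n}=(a^{n}-b^{n})(a^{n}+b^{n})$, and on the fact that $\gcd(\Ft{n},\Lt{n})=1$ (a common root would make $(a/b)^{n}$ equal both $1$ and $-1$). Using that the resultant is multiplicative in its first argument and that $\ress{c\,P}{Q}=c^{\deg Q}\ress{P}{Q}$ for a constant $c$, these give the organizing identity
\[
\ress{\Ft{2n}}{\Ft{m}}=\alpha^{(m-1)\eta}\,\ress{\Lt{n}}{\Ft{m}}\,\ress{\Ft{n}}{\Ft{m}},
\]
where I also use $\deg\Ft{m}=(m-1)\eta$ and $\deg\Lt{n}=n\eta$ (checked by induction from the recurrences, using $\eta>\omega$). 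First I would determine when the resultant vanishes: since the roots of $\Ft{2n}$ split, without overlap, into roots of $\Ft{n}$ and roots of $\Lt{n}$, the polynomials $\Lt{n}$ and $\Ft{m}$ have a common root if and only if $\gcd(\Ft{2n},\Ft{m})$ properly contains $\gcd(\Ft{n},\Ft{m})$; by $\gcd(\Ft{a},\Ft{b})=\Ft{\gcd(a,b)}$ (up to a unit) and the degree formula this happens precisely when $\gcd(2n,m)>\gcd(n,m)$, i.e.\ precisely when $E_{2}(m)>E_{2}(n)$. This is the branch $\ress{\Lt{n}}{\Ft{m}}=0$ for $E_{2}(n)<E_{2}(m)$.

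When $E_{2}(n)\ge E_{2}(m)$ and $\gcd(m,n)=1$ the value drops out directly: here $m$ must be odd, so $\gcd(2n,m)=1$, hence $\ress{\Ft{2n}}{\Ft{m}}$ and $\ress{\Ft{n}}{\Ft{m}}$ are the nonzero quantities furnished by Theorem~\ref{Resultants:Ft}, and solving the organizing identity gives
\[
\ress{\Lt{n}}{\Ft{m}}=\alpha^{\eta(1-m)}\big((-1)^{\eta\omega}\beta^{2\eta-\omega}\rho\big)^{n(m-1)/2},
\]
after simplifying $\tfrac{(2n-1)(m-1)}{2}-\tfrac{(n-1)(m-1)}{2}=\tfrac{n(m-1)}{2}$; this is $T_{\Lt{}\Ft{}}$ since $2^{\eta\gcd(m,n)-\eta}=1$ in this case. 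For $c:=\gcd(m,n)>1$ the factor $\ress{\Ft{n}}{\Ft{m}}$ vanishes, so the organizing identity alone is not enough; instead I would factor $\Ft{m}=\Ft{c}\cdot(\Ft{m}/\Ft{c})$ (valid since $c\mid m$) and use $\ress{\Lt{n}}{\Ft{m}}=\ress{\Lt{n}}{\Ft{c}}\cdot\ress{\Lt{n}}{\Ft{m}/\Ft{c}}$. Because $E_{2}(n)\ge E_{2}(m)$ forces $\gcd(2n,m)=c$, the polynomial $\Ft{m}/\Ft{c}$ is coprime to $\Ft{n}$ and to $\Ft{2n}$; its contribution I would compute by descending to the primitive-order factors of $\Ft{m}/\Ft{c}$, $\Ft{n}$ and $\Lt{n}$ that already appear in the proofs of Theorems~\ref{Resultants:Ft} and \ref{main:2:lucas}, turning $\ress{\Lt{n}}{\Ft{m}/\Ft{c}}$ into a telescoping product of known pairwise resultants of such factors (no factor of $\Lt{n}$ coincides with one of $\Ft{m}$, consistent with non-vanishing). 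For the overlap factor, every root $\zeta$ of $\Ft{c}$ satisfies $a^{c}(\zeta)=b^{c}(\zeta)$, hence $a^{n}(\zeta)=b^{n}(\zeta)$ since $c\mid n$, so by \eqref{bineformulados} one has $\Lt{n}(\zeta)=2a^{n}(\zeta)/\alpha$; the product over the $(c-1)\eta=\deg\Ft{c}$ roots of $\Ft{c}$ then produces exactly the factor $2^{(c-1)\eta}=2^{\eta\gcd(m,n)-\eta}$, while $\prod_{\zeta}a^{n}(\zeta)$ is pinned down from $\prod_{\zeta}a(\zeta)b(\zeta)=\prod_{\zeta}\big(-g(\zeta)\big)$ together with $a^{n}(\zeta)=b^{n}(\zeta)$. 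Assembling the overlap and coprime parts and simplifying the exponents should give $T_{\Lt{}\Ft{}}$.

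The main obstacle is the case $c=\gcd(m,n)>1$: carrying through the bookkeeping of the constants — the powers of $\beta$ and $\alpha$, the signs $(-1)^{\eta\omega}$, and above all the power of $2$ contributed as the roots of $\Ft{c}$ are peeled off — and verifying that, after the $\Ft{c}$-resultants are divided out, the remaining ``coprime part'' really does reassemble into the single clean power $\big((-1)^{\eta\omega}\beta^{2\eta-\omega}\rho\big)^{n(m-1)/2}$ asserted in the statement. A minor but necessary point throughout is that the relevant roots are simple and that $g(\zeta)\ne 0$ and $a(\zeta)\ne b(\zeta)$ there, which is exactly where the standing hypotheses $\gcd(d,g)=1$, $\deg g<\deg d$ and \eqref{extra:condition} enter.
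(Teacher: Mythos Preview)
Your organizing identity $\ress{\Ft{2n}}{\Ft{m}}=\alpha^{(m-1)\eta}\ress{\Lt{n}}{\Ft{m}}\ress{\Ft{n}}{\Ft{m}}$, obtained from $\Ft{2n}=\alpha\Lt{n}\Ft{n}$, is correct and gives a very clean treatment of two parts of the theorem. The vanishing criterion is right (and is essentially the content of the paper's Proposition~\ref{cor3}), and when $\gcd(m,n)=1$ with $E_2(n)\ge E_2(m)$ your computation is complete and correct: $m$ is then odd, both $\gcd(n,m)$ and $\gcd(2n,m)$ equal $1$, Theorem~\ref{Resultants:Ft} supplies the two Fibonacci resultants, and the quotient gives $T_{\Lt{}\Ft{}}$ on the nose. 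This is genuinely different from, and more transparent than, the paper's argument, which never uses the factorization $\Ft{2n}=\alpha\Lt{n}\Ft{n}$ and instead runs a minimal-counterexample Euclidean descent based on Proposition~\ref{Fibonacci:Lucas:Identities}.

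The case $c=\gcd(m,n)>1$, however, has a real gap. First, the ``primitive-order factors of $\Ft{m}/\Ft{c}$, $\Ft{n}$ and $\Lt{n}$ that already appear in the proofs of Theorems~\ref{Resultants:Ft} and~\ref{main:2:lucas}'' do not in fact appear there: those proofs proceed by Euclidean-type reductions (Propositions~\ref{main:lemma} and~\ref{Dic2}), not by any cyclotomic-style factorization, so there is no off-the-shelf table of pairwise resultants of such factors to telescope. You would have to develop that decomposition and compute all the cross-resultants from scratch, and $\ress{\Lt{n}}{\Ft{m}/\Ft{c}}$ is not a resultant of two GFPs, so none of the three main theorems applies to it directly. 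Second, your root-product computation of $\ress{\Lt{n}}{\Ft{c}}$ is not pinned down: from $a^{c}(\zeta)=b^{c}(\zeta)$ and $c\mid n$ you correctly get $\Lt{n}(\zeta)=2a^{n}(\zeta)/\alpha$, but then you only know $\prod_\zeta a(\zeta)^{2n}=\prod_\zeta(-g(\zeta))^{n}$; extracting $\prod_\zeta a(\zeta)^{n}$ leaves a sign that your sketch does not resolve, and this sign is exactly what must match the $(-1)^{\eta\omega}$ in $T_{\Lt{}\Ft{}}$. The paper sidesteps both issues by never splitting $\Ft{m}$: it uses the identities in Proposition~\ref{Fibonacci:Lucas:Identities} to write $\Ft{nq+r}$ (or $\alpha\Lt{nq+r}$) as $\Lt{n}\cdot(\text{something})$ plus a tail involving $\Ft{}$ or $\Lt{}$ of strictly smaller index, and then applies Lemma~\ref{Properties:res}\eqref{Rbasic-3} to reduce. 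If you want to keep your factorization idea, the cleanest fix is probably to treat the case $m\mid n$ directly (that is what $\ress{\Lt{n}}{\Ft{c}}$ is, since $c\mid n$), and for that the paper's identity $\Lt{n}=d\Ft{n}+2g\Ft{n-1}$ together with Lemma~\ref{lemma:Resultant:FibonaciT} is more reliable than the root product; the remaining factor $\ress{\Lt{n}}{\Ft{m}/\Ft{c}}$ would still need an independent argument.
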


\begin{theorem}\label{Disc:Fibonacci:type}  If $\deg(d)=1$, $g$ is a constant  and $d^{\prime}$ is the derivative of $d$, then
$$ \dis{\Ft{n}}= (-\rho)^{(n-2)(n-1)/2} (2 d^{\prime})^{n-1} n^{n-3}\beta^{(n-1)(n-3)}.$$
\end{theorem}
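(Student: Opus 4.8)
The plan is to reduce $\dis{\Ft{n}}$ to a product of the values of $\Ft{n}'$ over the roots of $\Ft{n}$, to rewrite each such value through $\Ft{n+1}$ by means of a derivative identity coming from the Binet formula \eqref{bineformulauno}, and then to evaluate the resulting products with Theorem \ref{Resultants:Ft} together with one direct resultant computation involving $\Delta:=d^2+4g$. To set up, note that $\deg(d)=1$ gives $\eta=1$ and, $g$ being a non-zero constant, $\omega=0$; hence $\beta=\lc(d)=d'$ is a constant, $\rho=\ress{g}{d}=g$, $\deg(\Ft{n})=n-1$, $\lc(\Ft{n})=\beta^{n-1}$ and $\deg(\Ft{n}')=n-2$. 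Writing $x_1,\dots,x_{n-1}$ for the roots of $\Ft{n}$ in $\mathbb{C}$, the definition of the discriminant and the root form of the resultant give
$$\dis{\Ft{n}}=(-1)^{\frac{(n-1)(n-2)}{2}}\beta^{-(n-1)}\ress{\Ft{n}}{\Ft{n}'},\qquad \ress{\Ft{n}}{\Ft{n}'}=\beta^{(n-1)(n-2)}\prod_{i=1}^{n-1}\Ft{n}'(x_i),$$
so the whole problem reduces to computing $\prod_{i}\Ft{n}'(x_i)$.

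The next step is the derivative identity. Differentiating \eqref{bineformulauno} and using that $g$ is constant (so $\Delta'=2dd'$) yields $a'=d'a/\sqrt{\Delta}$ and $b'=-d'b/\sqrt{\Delta}$, whence $(a^n-b^n)'=nd'(a^n+b^n)/\sqrt{\Delta}$; dividing by $a-b=\sqrt{\Delta}$ and simplifying gives the polynomial identity $\Delta\,\Ft{n}'=d'\big(n(a^n+b^n)-d\,\Ft{n}\big)$. Since $a^n+b^n=2\Ft{n+1}-d\,\Ft{n}$ (immediate from the Binet formulas), this becomes $\Delta\,\Ft{n}'=d'\big(2n\Ft{n+1}-(n+1)d\,\Ft{n}\big)$. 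Evaluating at a root $x_i$ of $\Ft{n}$, and observing that $\Delta(x_i)\neq0$ because a common root of $\Ft{n}$ and $\Delta$ would force $a(x_i)=b(x_i)=0$, contradicting $a(x_i)b(x_i)=-g\neq0$, we obtain $\Ft{n}'(x_i)=2n\beta\,\Ft{n+1}(x_i)/\Delta(x_i)$, hence
$$\prod_{i=1}^{n-1}\Ft{n}'(x_i)=(2n\beta)^{n-1}\frac{\prod_{i}\Ft{n+1}(x_i)}{\prod_{i}\Delta(x_i)}.$$

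Now I would evaluate the two products and assemble. Since $\gcd(n,n+1)=1$, Theorem \ref{Resultants:Ft} gives $\ress{\Ft{n}}{\Ft{n+1}}=(\beta^{2}\rho)^{n(n-1)/2}$, and dividing by $\lc(\Ft{n})^{\deg\Ft{n+1}}=\beta^{n(n-1)}$ yields $\prod_{i}\Ft{n+1}(x_i)=\rho^{n(n-1)/2}$. For the other product I would use $\prod_{i}\Delta(x_i)=\ress{\Ft{n}}{\Delta}/\lc(\Ft{n})^{2}$ and compute $\ress{\Ft{n}}{\Delta}=\ress{\Delta}{\Ft{n}}=\lc(\Delta)^{n-1}\prod_{y}\Ft{n}(y)$, where $y$ runs over the two roots of $\Delta$; at such a $y$ one has $a(y)=b(y)=d(y)/2$, so $\Ft{n}(y)=\sum_{k=0}^{n-1}a(y)^{k}b(y)^{n-1-k}=n(d(y)/2)^{n-1}$, while $\prod_{y}d(y)=4g$ follows from $\ress{d}{\Delta}=\lc(d)^{2}\Delta(z_0)=4g\beta^{2}$ with $z_0$ the root of $d$. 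This gives $\ress{\Ft{n}}{\Delta}=n^{2}\beta^{2(n-1)}g^{n-1}$ and $\prod_{i}\Delta(x_i)=n^{2}\rho^{n-1}$. Substituting, $\prod_{i}\Ft{n}'(x_i)=2^{n-1}n^{n-3}\beta^{n-1}\rho^{(n-1)(n-2)/2}$, so $\ress{\Ft{n}}{\Ft{n}'}=2^{n-1}n^{n-3}\beta^{(n-1)^2}\rho^{(n-1)(n-2)/2}$, and inserting this into the first display and collecting powers of $\beta$ (using $(n-1)^2-(n-1)=(n-1)(n-2)=(n-1)+(n-1)(n-3)$ and $\beta=d'$) gives exactly $\dis{\Ft{n}}=(-\rho)^{(n-2)(n-1)/2}(2d')^{n-1}n^{n-3}\beta^{(n-1)(n-3)}$.

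The point needing the most care is the derivative identity: the computations with $\sqrt{\Delta}$ are formal (carried out in $\mathbb{Q}(x)[\sqrt{\Delta}]$, or over the locus $\Delta>0$ and then extended by the polynomial identity theorem), and one must verify $a^n+b^n=2\Ft{n+1}-d\,\Ft{n}$ cleanly. The second delicate step is the resultant $\ress{\Ft{n}}{\Delta}$ --- locating the two roots of $\Delta$, noting they are simple because $g\neq0$, and evaluating $\Ft{n}$ at them; once these are secured, what remains is routine exponent bookkeeping.
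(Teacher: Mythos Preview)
Your proof is correct. The core structure is the same as the paper's --- both multiply $\Ft{n}'$ through by $\Delta=(a-b)^2$, compute the resultant of $\Ft{n}$ with each side, and divide out $\ress{\Ft{n}}{\Delta}$ --- but the two executions diverge in two meaningful places.

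First, you write $a^n+b^n=2\Ft{n+1}-d\,\Ft{n}$ and therefore only need Theorem~\ref{Resultants:Ft} (the Fibonacci--Fibonacci resultant) to evaluate $\prod_i\Ft{n+1}(x_i)$. The paper instead writes $a^n+b^n=\alpha\Lt{n}$ (this is exactly Theorem~\ref{Gen:Falcon:Plaza}), and so must invoke the mixed resultant $\ress{\Lt{n}}{\Ft{n}}$ from Theorem~\ref{Main:3:thm}. Your route is logically lighter: it makes the discriminant of $\Ft{n}$ independent of the Lucas-type theory. Second, you compute $\ress{\Ft{n}}{\Delta}$ directly, by evaluating $\Ft{n}$ at the two roots of $\Delta$ (where $a=b=d/2$, so $\Ft{n}=n(d/2)^{n-1}$) and then identifying $\prod_y d(y)=4g$ via $\ress{d}{\Delta}$. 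The paper obtains the same quantity through Lemmas~\ref{lemma1:Disc:Mod:d24g}--\ref{Disc:Lemma18}, an inductive calculation of $\Ft{n}\bmod(d^2+4g)$ valid for arbitrary $\eta,\omega$. Your computation is shorter and perfectly adequate under the hypotheses $\eta=1$, $\omega=0$ of the theorem; the paper's lemma is stated in a generality it does not actually need here.

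One small point worth tightening in your write-up: the claim ``$\Delta(x_i)\neq0$ because a common root would force $a(x_i)=b(x_i)=0$'' deserves the one extra line you sketch at the end --- at a common root one has $a=b$, hence $\Ft{n}=na^{n-1}=0$, hence $a=b=0$, contradicting $ab=-g\neq0$.
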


\begin{theorem} \label{Disc:Lucas:type} If $\deg(d)=1$, $g$ is a constant  and $d^{\prime}$ is the derivative of $d$, then
	$$\dis{\Lt{n}}= \left(-\rho\right)^{n(n-1)/2} 2^{n-1}\left(nd^{\prime}\right)^{ n}    \alpha^{2-2n}  \beta^{n(n-2)}.$$
\end{theorem}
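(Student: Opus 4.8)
The strategy is to express $\dis{\Lt{n}}$ through the resultant $\ress{\Lt{n}}{\Lt{n}'}$, to identify $\Lt{n}'$ as a constant multiple of the equivalent Fibonacci-type polynomial $\Ft{n}$, and then to invoke Theorem~\ref{Main:3:thm} with $m=n$; this runs in the same spirit as the proof of Theorem~\ref{Disc:Fibonacci:type}, but it uses the cross-resultant rather than the Fibonacci-type resultant. First I would record two elementary facts. Under the hypotheses $\eta=\deg(d)=1$ and $\omega=\deg(g)=0$, the recurrence \eqref{Fibonacci;general:LT} has $\deg\bigl(d\,\Lt{n-1}\bigr)=n>n-2=\deg\bigl(g\,\Lt{n-2}\bigr)$, and since $\Lt{1}=p_{1}=d/\alpha$ has degree $1$ while $\Lt{0}=p_{0}$ is a nonzero constant, an induction yields $\deg(\Lt{n})=n$ and $\lc(\Lt{n})=\beta^{n}/\alpha$ for $n\ge 1$.

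Second, I would establish the derivative identity $\Lt{n}'=(n\,d'/\alpha)\,\Ft{n}$, a Lucas-type analogue of the Falc\'on--Plaza formula \cite{Falcon}. Because $g$ is constant, differentiating the relations $a+b=d$ and $ab=-g$ gives $a'=d'a/(a-b)$ and $b'=-d'b/(a-b)$; moreover $d'=\beta$ is a nonzero constant precisely because $\deg(d)=1$. Differentiating the Binet formula \eqref{bineformulados} then yields
\[
\Lt{n}'=\frac{n\,a^{n-1}a'+n\,b^{n-1}b'}{\alpha}=\frac{n\,d'}{\alpha}\cdot\frac{a^{n}-b^{n}}{a-b}=\frac{n\,d'}{\alpha}\,\Ft{n},
\]
the last step being \eqref{bineformulauno}. (If a derivative identity for $\Lt{n}$ has already been proved in the paper, it may simply be quoted here.)

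Now I would assemble these ingredients. From $\dis{\Lt{n}}=(-1)^{n(n-1)/2}\lc(\Lt{n})^{-1}\ress{\Lt{n}}{\Lt{n}'}$ with $\deg(\Lt{n})=n$, and from the root-product expression for the resultant, which gives $\ress{P}{cQ}=c^{\deg P}\ress{P}{Q}$ for a nonzero constant $c$ (applicable here since $n\,d'/\alpha\ne 0$ and $\deg\Lt{n}'=n-1=\deg\Ft{n}$), we obtain
\[
\ress{\Lt{n}}{\Lt{n}'}=\Bigl(\frac{n\,d'}{\alpha}\Bigr)^{\!n}\ress{\Lt{n}}{\Ft{n}}.
\]
Applying Theorem~\ref{Main:3:thm} with $m=n$: since $E_{2}(n)=E_{2}(n)$ we are in the branch $E_{2}(n)\ge E_{2}(m)$, so $\ress{\Lt{n}}{\Ft{n}}=T_{\Lt{}\Ft{}}$, and substituting $\eta=1$, $\omega=0$, $\gcd(n,n)=n$ this becomes $\ress{\Lt{n}}{\Ft{n}}=2^{\,n-1}\alpha^{1-n}\bigl(\beta^{2}\rho\bigr)^{n(n-1)/2}$.

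Finally, substituting $\lc(\Lt{n})^{-1}=\alpha\beta^{-n}$,
\[
\dis{\Lt{n}}=(-1)^{\frac{n(n-1)}{2}}\,\alpha\beta^{-n}\Bigl(\frac{n\,d'}{\alpha}\Bigr)^{\!n}2^{\,n-1}\alpha^{1-n}\beta^{n(n-1)}\rho^{\frac{n(n-1)}{2}},
\]
and collecting exponents finishes the proof: the $\alpha$-exponent is $1+(-n)+(1-n)=2-2n$, the $\beta$-exponent (from $\beta^{-n}$ and $\beta^{n(n-1)}$) is $n(n-2)$, the derivative contributes $(n\,d')^{n}$, and $(-1)^{n(n-1)/2}\rho^{n(n-1)/2}=(-\rho)^{n(n-1)/2}$, giving $\dis{\Lt{n}}=(-\rho)^{n(n-1)/2}2^{\,n-1}(n\,d')^{n}\alpha^{2-2n}\beta^{n(n-2)}$. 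The only genuinely substantive step is the derivative identity: one must check that ``$g$ constant'' is exactly what kills the term $a'b+ab'=-g'$, and that ``$\deg(d)=1$'' is exactly what makes $d'$ a constant so that the resultant scaling applies; everything afterwards is exponent bookkeeping, which I would sanity-check against $n=1$ (where $\dis{\Lt{1}}=1$) and $n=2$ (the ordinary quadratic discriminant).
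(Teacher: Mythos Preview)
Your proof is correct and follows essentially the same route as the paper's: compute $\Lt{n}'=(n\,d'/\alpha)\Ft{n}$ (which the paper has already recorded as Theorem~\ref{Gen:Falcon:Plaza}\eqref{Deriv:Lucas}), pull the constant out of $\ress{\Lt{n}}{\Lt{n}'}$, and invoke Theorem~\ref{Main:3:thm} with $m=n$. The degree and leading-coefficient facts you re-derive are Lemma~\ref{Propierties:FT}\eqref{Propierties:LT} in the paper, so you may simply quote them.
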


\subsection{Corollaries. Resultants and discriminant of some known GFP sequences}

In this section we present corollaries of the main results.
Table \ref{corollary_Fibonacci} presents the resultants for some Fibonacci-type polynomials.
Table \ref{corollary_lucas} presents the resultants of some Lucas-type polynomials.
Table \ref{corollary_lucas_fibo} presents the resultants of two equivalent polynomials
(Lucas-type and its equivalent polynomial of Fibonacci-type).
Table \ref{Disc_Fibo_Lucas} gives the discriminants of familiar polynomials discussed
in this paper. In the first half of Table \ref{Disc_Fibo_Lucas} are the the Fibonacci-type polynomials
and in the second half of the table are the Lucas-type polynomials. 
Note that the derivatives for GFPs is given  in Table \eqref{Deriv:Gen:Falcon:Plaza} on page \pageref{Deriv:Gen:Falcon:Plaza}.

Note that the  following property can be used to find the discriminant  of a product of GFPs  (see \cite{Childs}). If $P$ and $Q$ are polynomials in $\mathbb{Q}[x]$, then  $\dis{PQ}=\dis{P}\dis{Q}\ress{P}{Q}$. 

\begin{table} [!ht]
\begin{center}\scalebox{1}{
\begin{tabular}{|l|l|l|l|} \hline
 		Polynomial      	 	 &  $\gcd(m,n)=1$ 							&  $\gcd(m,n)>1$                  \\ \hline \hline
 		Fibonacci            	 & $\ress{F_{m}}{ F_{n}}=1$ 					& $\ress{F_{m}}{ F_{n}}=0$              \\
 		Pell		         	 & $\ress{P_{m}}{ P_{n}}=2^{(m-1) (n-1)}$	  		& $\ress{P_{m}}{ P_{n}}=0$        \\
 		Fermat		         & $\ress{\Phi_{m}}{ \Phi_{n}}= (-18)^{(m-1) (n-1)/2}$ & $\ress{\Phi_{m}}{ \Phi_{n}}=0 $   \\
 		Chebyshev 2nd kind & $\ress{U_{m}}{ U_{n}}\,=(-4)^{(m-1) (n-1)/2}$  	& $\ress{U_{m}}{ U_{n}}\,=0  $        \\
  		Morgan-Voyce         &$\ress{B_{m}}{ B_{n}}=(-1)^{(m-1) (n-1)/2}$   		&$\ress{B_{m}}{ B_{n}}=0$              \\
 \hline
\end{tabular}}
\end{center}
\caption{Resultants of Fibonacci-type polynomials using Theorem \ref{Resultants:Ft}.} \label{corollary_Fibonacci}
\end{table}

\begin{table} [!ht]
	\begin{center}\scalebox{1}{
			\begin{tabular}{|l|l|l|} \hline
				Polynomial      	       & $E_{2}(m) \ne  E_{2}(n)$, $\delta= {\gcd (m,n)}$  					& $E_{2}(m)= E_{2}(n)$               \\ \hline \hline
				Lucas               	       & $\ress{D_{m}}{ D_{n}}=2^{\delta}$                         					& $\ress{D_{m}}{ D_{n}}= 0$                     \\
				Pell-Lucas-prime     & $\ress{Q_{m}^{\prime}}{ Q_{n}^{\prime}}= 2^{(m-1) (n-1)-1} 2^{\delta}$ 	& $\ress{Q_{m}^{\prime}}{ Q_{n}^{\prime}}=0$             \\
				Fermat-Lucas          & $\ress{\vartheta_{m}}{ \vartheta_{n}}=(-1)^{m n/2}18^{m n/2}\; 2^{\delta}$ & $\ress{\vartheta_{m}}{ \vartheta_{n}}\,\,=0$ \\
				Chebyshev 1st kind& $\ress{T_{m}}{ T_{n}}=(-1)^{\frac{m n}{2}} 2^{(m-1) (n-1)-1} 2^{\delta}$	& $\ress{T_{m}}{ T_{n}}\,\,=0$                 \\
				Morgan-Voyce        & $\ress{C_{m}}{ C_{n}}=(-1)^{\frac{m n}{2}}\; 2^{\delta} $				& $\ress{C_{m}}{ C_{n}}\,=0$                   \\
				\hline
		\end{tabular}}
	\end{center}
	\caption{Resultants of Lucas-type polynomials using Theorem \ref{main:2:lucas}} \label{corollary_lucas}
\end{table}

\begin{table} [!ht]
	\begin{center}\scalebox{.97}{
			\begin{tabular}{|l|l|l|} \hline
				Polynomials     	 	  		&  $E_{2}(n)\ge  E_{2}(m)$, $\delta= {\gcd (n,m)}$     			& $E_{2}(n)<  E_{2}(m)$             \\ \hline \hline
				Lucas, Fibonacci          		& $\ress{D_{n}}{F_{m}}=2^{\delta-1}$                 				& $\ress{D_{n}}{F_{m}}=0$           \\
				Pell-Lucas-prime, Pell      		& $\ress{Q_{n}^{\prime}}{P_{m}}=2^{(m-1) (n-1)}\; 2^{\delta-1}$		& $\ress{Q_{n}^{\prime}}{P_{m}}=0$       \\
				Fermat-Lucas, Fermat       	& $\ress{\vartheta_{n}}{\Phi_{m}}=(-18)^{ n(m-1)/2}\; 2^{\delta-1}$  	& $\ress{\vartheta_{n}}{\Phi_{m}}=0$\\
				Chebyshev both kinds 		& $\ress{T_{n}}{U_{m}}=(-1)^{n(m-1)/2} 2^{(m-1) (n-1)} 2^{\delta-1}$	&  $\ress{T_{n}}{U_{m}}=0$           \\
				Morgan-Voyce both types    	& $\ress{C_{n}}{B_{m}}=(-1)^{n(m-1)/2} \;2^{\delta-1}$			& $\ress{C_{n}}{B_{m}}=0$           \\
				\hline
		\end{tabular}}
	\end{center}	
\caption{Resultants of two equivalent  polynomials using Theorem \ref{Main:3:thm}.} \label{corollary_lucas_fibo}
\end{table}

\begin{table} [!ht]
\begin{center}\scalebox{1}{
\begin{tabular}{|l|l|l} \hline
 			Polynomial      	 	 &  Discriminants of GFP    						\\ \hline \hline
			 Fibonacci            	 & $\dis{F_{n}}=(-1)^{ (n-2) (n-1)/2} 2^{n-1} n^{n-3}$          \\
 			Pell		         	 & $\dis{P_{n}}=  (-1)^{(n-2) (n-1)/2} 2^{(n-1)^2} n^{n-3} $  \\
 			Fermat		          & $\dis{\Phi_{n}}= 2^{ n (n-1)/2} 3^{(n-1) (n-2)} n^{n-3} $  \\
 			Chebyshev 2nd kind  & $\dis{U_{n}}=  2^{(n - 1)^2} n^{n - 3}  $        			\\
 			Morgan-Voyce            &$\dis{B_{n}} = 2^{n-1} n^{n-3}   $            			\\ \hline  \hline
			Lucas               		 & $\dis{D_{n}}=(-1)^{n (n-1)/2} 2^{n-1} n^n$               	\\
			Pell-Lucas-prime        & $\dis{Q_{n}^{\prime}}=(-1)^{n (n-1)/2} 2^{(n-1)^2} n^n$  \\
			Fermat-Lucas         	 & $\dis{\vartheta_{n}}=2^{ (n-1) (n+2)/2} 3^{n (n-1)} n^n$  \\
			Chebyshev 1st kind   & $\dis{T_{n}}=2^{(n-1)^2} n^n$               				\\
			Morgan-Voyce           & $\dis{C_{n}}= 2^{n-1} n^n$                 				 \\
			\hline
\end{tabular}}
\end{center}
\caption{Discriminants  of GFP using Theorems \ref{Disc:Fibonacci:type} and \ref{Disc:Lucas:type}.} \label{Disc_Fibo_Lucas}
\end{table}

\section{Proofs of main results about the resultant of two GFP}
In this section we give the proofs of three main results presented in Section \ref{Main:Results}.

\subsection{Background and some known results}
Most of the results in this subsection are in \cite{florezHiguitaMuk2018}. Proposition \ref{main:lemma}
is a result that is in the proof of \cite[Proposition 6]{florezHiguitaMuk2018} therefore its proof is omitted. 

Recall that we use $\deg(P)$ and $\lc(P)$ to mean degree and leading  coefficient of a polynomial $P$.
In this paper we use   $\mathbb{Z}_{\ge 0}$ and  $\mathbb{Z}_{> 0}$
to mean the set of non-negative integers and  positive integers, respectively.
Recall that  $\beta$, $\lambda$, $\eta$, $\omega$, and $\rho$ are defined in \eqref{Degree:Lead:Def}
on page \pageref{Degree:Lead:Def} and that $d$, $g$ and $\alpha$ are defined on page \pageref{Fibonacci;general:LT}.
	
\begin{proposition} \label{main:lemma} Let $m$, $n$, $r$, and $q$ be positive integers. 
If $n=mq+r$, then there is  a polynomial $T $ such that $\Ft{n}=\Ft{m}T +g \Ft{mq-1}\Ft{r}$.

\end{proposition}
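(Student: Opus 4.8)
The plan is to reduce the statement to two elementary facts about the polynomials $\Ft{n}$: an index‑splitting (addition) formula, and the divisibility $\Ft{m}\mid\Ft{mq}$ in $\mathbb{Q}[x]$. Once both are available the proposition follows from a single substitution, so I do not expect a genuine obstacle, only the bookkeeping of small‑index edge cases.

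\emph{Step 1: the addition formula.} I would first establish that
\[
\Ft{i+j}=\Ft{i+1}\Ft{j}+g\,\Ft{i}\Ft{j-1}\qquad(i\ge 0,\ j\ge 1).
\]
The quickest route uses the Binet formula \eqref{bineformulauno}: writing $\Ft{k}=(a^{k}-b^{k})/(a-b)$ and using $ab=-g$, the right‑hand side equals
\[
\frac{(a^{i+1}-b^{i+1})(a^{j}-b^{j})-ab\,(a^{i}-b^{i})(a^{j-1}-b^{j-1})}{(a-b)^{2}},
\]
and upon expanding the numerator all the mixed terms cancel, leaving $\big(a^{i+j}-b^{i+j}\big)(a-b)$; dividing by $(a-b)^{2}$ gives $\Ft{i+j}$. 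If one prefers not to invoke the hypothesis $d^{2}+4g>0$ that is used to pass to $a,b$, the same identity follows by a two‑step induction on $j$ directly from \eqref{Fibonacci;general:FT}: it is immediate for $j=1$ and $j=2$ (using $\Ft{0}=0$, $\Ft{1}=1$, $\Ft{2}=d$), and $\Ft{i+j+1}=d\,\Ft{i+j}+g\,\Ft{i+j-1}$ combined with the inductive hypotheses for $j$ and $j-1$ yields the case $j+1$.

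\emph{Step 2: divisibility.} Next I would show $\Ft{m}\mid\Ft{mq}$ in $\mathbb{Q}[x]$ for every $q\ge 1$, by induction on $q$; the case $q=1$ is trivial. For the inductive step, Step 1 with indices $m$ and $mq$ gives
\[
\Ft{m(q+1)}=\Ft{m+1}\Ft{mq}+g\,\Ft{m}\Ft{mq-1},
\]
where the first summand is divisible by $\Ft{m}$ by the inductive hypothesis and the second is obviously divisible by $\Ft{m}$; hence $\Ft{m}\mid\Ft{m(q+1)}$. Consequently there is $S\in\mathbb{Q}[x]$ with $\Ft{mq}=\Ft{m}\,S$.

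\emph{Step 3: conclusion.} Since $n=mq+r$ with $m,q,r\in\mathbb{Z}_{>0}$, applying Step 1 with indices $i=mq-1$ and $j=r+1$ (the degenerate case $mq=1$ being absorbed by $\Ft{0}=0$, $\Ft{1}=1$) gives
\[
\Ft{n}=\Ft{mq+r}=\Ft{mq}\Ft{r+1}+g\,\Ft{mq-1}\Ft{r},
\]
and substituting $\Ft{mq}=\Ft{m}\,S$ from Step 2 yields $\Ft{n}=\Ft{m}\big(S\,\Ft{r+1}\big)+g\,\Ft{mq-1}\Ft{r}$. Thus $T:=S\,\Ft{r+1}\in\mathbb{Q}[x]$ is the required polynomial. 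The only part carrying any content is the divisibility $\Ft{m}\mid\Ft{mq}$, and even that is a one‑line induction once the addition formula is in hand; everything else is routine.
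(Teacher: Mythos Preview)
Your argument is correct. The paper itself omits the proof of this proposition entirely, pointing instead to the proof of \cite[Proposition~6]{florezHiguitaMuk2018}; the argument there is precisely the one you give---first the index-splitting identity $\Ft{i+j}=\Ft{i+1}\Ft{j}+g\,\Ft{i}\Ft{j-1}$, then the divisibility $\Ft{m}\mid\Ft{mq}$, and finally the substitution. So your proposal matches both in spirit and in detail what the paper outsources, and supplies the missing steps cleanly.
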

\begin{proposition} \label{Dic2} If $m$, $n$, $r$, and $q$ are positive integers, then 
	 \label{Dic2:Part2} if $r<m$, then there is a polynomial $T $ such that for $t=\left\lceil \frac{q}{2} \right\rceil$ this holds
			\[ \Lt{mq+r}=
			\left\{
			\begin{array}{ll}
			\Lt{m}T +(-1)^{m(t-1)+t+r}(g )^{(t-1)m+r} \Lt{m-r}, & \hbox{ if } $q$ \hbox{ is odd;} \\
			\Lt{m}T +(-1)^{(m+1)t}(g )^{mt} \Lt{r},             & \hbox{ if } $q$ \hbox{ is even}.
			\end{array}
			\right.
			\]		

\end{proposition}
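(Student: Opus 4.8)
The plan is to reduce everything to one multiplication identity for Lucas-type polynomials and then induct on $q$, working modulo the principal ideal $(\Lt{m})$ in $\mathbb{Q}[x]$.

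First I would record the basic product formula. Adjoining $\sqrt{d^2+4g}$ to $\mathbb{Q}(x)$ so that $a=a(x)$ and $b=b(x)$ exist and \eqref{bineformulados} applies formally (both sides below are polynomials, so the identity is valid regardless of the sign of $d^2+4g$), from $(a^j+b^j)(a^k+b^k)=a^{j+k}+b^{j+k}+(ab)^k(a^{j-k}+b^{j-k})$ and $ab=-g$ one gets, for all integers $j\ge k\ge 0$,
\[ \alpha\,\Lt{j}\Lt{k}=\Lt{j+k}+(-g)^k\,\Lt{j-k}. \]
Two instances are used. Taking $k=m$ and replacing $j$ by $m(q-1)+r$ gives the two-step recurrence
\[ \Lt{mq+r}=\alpha\,\Lt{m}\,\Lt{m(q-1)+r}-(-g)^m\,\Lt{m(q-2)+r}\qquad(q\ge 2); \]
taking $j=m$, $k=r$ (legitimate since $r<m$) gives
\[ \Lt{m+r}=\alpha\,\Lt{m}\,\Lt{r}-(-g)^r\,\Lt{m-r}. \]

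Next, set $S_q:=\Lt{mq+r}$ and pass to $\mathbb{Q}[x]/(\Lt{m})$. The recurrence collapses to $S_q\equiv -(-g)^m S_{q-2}$, the second identity gives $S_1\equiv -(-g)^r\Lt{m-r}$, and trivially $S_0=\Lt{r}$. I would then prove by induction on $q$, stepping by $2$ and treating the even and odd residues separately, that with $t=\lceil q/2\rceil$,
\[ S_q\equiv
\begin{cases}
(-1)^{m(t-1)+t+r}\,g^{(t-1)m+r}\,\Lt{m-r}, & q\text{ odd},\\[2pt]
(-1)^{(m+1)t}\,g^{mt}\,\Lt{r}, & q\text{ even},
\end{cases}
\pmod{\Lt{m}}.
\]
The base cases $q=1$ and $q=2$ are exactly the two identities above, after rewriting $-(-g)^r=(-1)^{1+r}g^r$ and $-(-g)^m=(-1)^{m+1}g^m$ and noting $\lceil 1/2\rceil=\lceil 2/2\rceil=1$. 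In the inductive step one multiplies the statement for $q-2$ by $-(-g)^m=(-1)^{m+1}g^m$ and uses $\lceil q/2\rceil=\lceil(q-2)/2\rceil+1$. Finally, since a congruence modulo $(\Lt{m})$ in $\mathbb{Q}[x]$ says precisely that the difference equals $\Lt{m}$ times a polynomial, this yields the polynomial $T$ and the two claimed formulas.

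The only genuinely delicate point is the bookkeeping of the $\pm1$ and $g$-exponents in the induction: one must check that when $t\mapsto t+1$, multiplying by $(-1)^{m+1}g^m$ sends $m(t-1)+t+r$ to the next such exponent and $(t-1)m+r$ to $(t-1+1)m+r$ (odd case), and sends $(m+1)t$ to $(m+1)(t+1)$ and $mt$ to $m(t+1)$ (even case) — a routine but sign-sensitive verification. Everything else (the product formula, the recurrence, and reduction mod $\Lt{m}$) is mechanical; note also that the first displayed identity is the Lucas-type analogue of Proposition~\ref{main:lemma}, the same computation carried out with $a^j-b^j$ in place of $a^j+b^j$.
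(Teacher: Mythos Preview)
Your argument is correct. The product identity $\alpha\,\Lt{j}\Lt{k}=\Lt{j+k}+(-g)^{k}\Lt{j-k}$ follows exactly as you say from the Binet formula \eqref{bineformulados} and $ab=-g$, the two instances you extract are the right ones, and the induction modulo $(\Lt{m})$ goes through: the sign and $g$-exponent bookkeeping checks out in both parities (I verified that multiplying by $(-1)^{m+1}g^{m}$ sends the exponent $m(t-2)+(t-1)+r$ to $m(t-1)+t+r$ and $(t-2)m+r$ to $(t-1)m+r$ in the odd case, and $(m+1)(t-1)$ to $(m+1)t$, $m(t-1)$ to $mt$ in the even case).

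As for comparison with the paper: the paper does not actually supply a proof of this proposition. It is listed in the background subsection, introduced with the remark that most results there come from \cite{florezHiguitaMuk2018}, and is stated without argument. So there is nothing to compare against; your proof stands on its own. The device of reducing modulo the principal ideal $(\Lt{m})$ is a clean way to package the induction and is in the same spirit as the paper's later manipulations (e.g.\ in the proof of Theorem~\ref{main:2:lucas}, where the identity of this proposition is invoked precisely to reduce $\Lt{mq+r}$ modulo $\Lt{m}$ up to a controlled factor).
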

	
\begin{proposition} \label{Fibonacci:Lucas:Identities}
If  $n$, $q$, and $r$ are nonnegative integers with $q>0$, then
\begin{enumerate}[(i)]
    \item \label{Fibonacci:Identities1}
		\[ \Ft{nq+r}=\left\{
		\begin{array}{ll} \alpha\Lt{n}\Ft{n(q-1)+r}-(-g)^n \Ft{n(q-2)+r}, & \mbox{if } q>1;\\
				      \alpha\Lt{n}\Ft{r}+(-g)^r\Ft{n-r}               & \mbox{if } q=1.
		\end{array}\right.
		\]
    \item \label{Lucas:Identities2}
		\[ 
		\alpha\Lt{nq+r}=\left\{
		\begin{array}{ll} 	(a-b)^2\Ft{n}\Ft{n(q-1)+r}+\alpha(-g)^n \Lt{n(q-2)+r} & \mbox{if } q>1;\\
				      		 	 (a-b)^2\Ft{n}\Ft{r}+\alpha(-g)^r \Lt{n-r}             & \mbox{if } q=1.
		\end{array} \right.
		\]
\end{enumerate}
\end{proposition}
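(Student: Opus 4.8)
The plan is to derive all four identities directly from the Binet formulas \eqref{bineformulauno} and \eqref{bineformulados}. I would work inside the ring $\mathbb{Q}[x][a]$, where $a=a(x)$ is a root of $z^{2}-d(x)z-g(x)=0$ and $b=b(x)$ is its conjugate, so that $a+b=d$, $ab=-g$, $(a-b)^{2}=d^{2}+4g$, and the representations $\Ft{k}=(a^{k}-b^{k})/(a-b)$ and $\Lt{k}=(a^{k}+b^{k})/\alpha$ hold there as identities (this is exactly the construction recalled before \eqref{bineformulauno}). Since $ab=-g$, we have $(-g)^{k}=(ab)^{k}=a^{k}b^{k}$, so every term occurring in the proposition is a symmetric (Laurent) expression in $a$ and $b$, hence an element of $\mathbb{Q}[x]$; thus it suffices to check each identity inside $\mathbb{Q}[x][a]$, with no analytic hypothesis on the sign of $d^{2}+4g$ needed.

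For part (i) with $q>1$, substitute the Binet formula for every $\Ft{}$ together with $\alpha\Lt{n}=a^{n}+b^{n}$; the factor $1/(a-b)$ is then common to both sides, and it remains to verify
\[
a^{nq+r}-b^{nq+r}=(a^{n}+b^{n})\bigl(a^{n(q-1)+r}-b^{n(q-1)+r}\bigr)-a^{n}b^{n}\bigl(a^{n(q-2)+r}-b^{n(q-2)+r}\bigr).
\]
Expanding the right-hand side yields, besides $a^{nq+r}-b^{nq+r}$, the four mixed monomials $-a^{n}b^{n(q-1)+r}$, $+a^{n(q-1)+r}b^{n}$, $-a^{n(q-1)+r}b^{n}$, $+a^{n}b^{n(q-1)+r}$, which cancel in two pairs. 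Part (ii) with $q>1$ is the same in spirit: after cancelling $\alpha$ it reduces to $a^{nq+r}+b^{nq+r}=(a^{n}-b^{n})\bigl(a^{n(q-1)+r}-b^{n(q-1)+r}\bigr)+a^{n}b^{n}\bigl(a^{n(q-2)+r}+b^{n(q-2)+r}\bigr)$, and again the cross terms cancel.

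For the cases $q=1$ the exponent $n(q-2)+r$ would become $r-n$, which is why these are stated separately. The cleanest rigorous route is to repeat the two-line Binet expansion above with $q=1$: part (i) reduces to $a^{n+r}-b^{n+r}=(a^{n}+b^{n})(a^{r}-b^{r})+a^{r}b^{r}(a^{n-r}-b^{n-r})$ and part (ii) to $a^{n+r}+b^{n+r}=(a^{n}-b^{n})(a^{r}-b^{r})+a^{r}b^{r}(a^{n-r}+b^{n-r})$, where I have used $(-g)^{r}=(ab)^{r}$; in each case the cross terms cancel in a single pair. (Informally these are the $q\to 1$ specializations of the $q>1$ formulas under the Binet-forced conventions $\Ft{-k}=-(-g)^{-k}\Ft{k}$ and $\Lt{-k}=(-g)^{-k}\Lt{k}$, but since those involve $(-g)^{-k}\notin\mathbb{Q}[x]$ I would not base the proof on them.)

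I do not anticipate a genuine obstacle: the whole statement is a bounded, mechanical Binet computation. The only points deserving care are (a) stating explicitly that the computation is carried out in the extension ring $\mathbb{Q}[x][a]$, so that the Binet identities are legitimately available and the conclusions are honest identities in $\mathbb{Q}[x]$ regardless of whether $d^{2}+4g>0$; and (b) the index bookkeeping in the boundary case $q=1$ — in particular that $\Ft{n-r}$ and $\Lt{n-r}$, as written, presuppose $r\le n$, which is the regime in which the proposition is subsequently applied. A Binet-free alternative by induction on $q$ using the recurrences \eqref{Fibonacci;general:FT} and \eqref{Fibonacci;general:LT} is possible but longer and more error-prone, so I would not take it.
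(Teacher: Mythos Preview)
Your proposal is correct and takes essentially the same approach as the paper: both substitute the Binet formulas \eqref{bineformulauno} and \eqref{bineformulados}, use $ab=-g$, and observe that the cross terms cancel. The paper is terser---it defers the $q=1$ case of part~\eqref{Fibonacci:Identities1} to \cite[Proposition~3]{florezHiguitaMuk2018} and omits part~\eqref{Lucas:Identities2} as similar---whereas you spell out all four cases and add the (sound) remarks about working in $\mathbb{Q}[x][a]$ and about the implicit hypothesis $r\le n$ in the $q=1$ formulas.
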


\begin{proof}  We prove Part \eqref{Fibonacci:Identities1}, the proof of Part \eqref{Lucas:Identities2} is similar and it is omitted.
If $q=1$, then the proof follows from \cite[Proposition 3]{florezHiguitaMuk2018}. We now prove the case in which $q>1$.
Using Binet formulas \eqref{bineformulauno} and  \eqref{bineformulados} we obtain
$$\alpha\Lt{n}\Ft{n(q-1)+r}=\alpha\frac{(a^n+b^n)}{\alpha}\frac{(a^{n(q-1)+r}-b^{n(q-1)+r})}{a-b}.$$
Expanding and simplifying we have
\[\alpha\Lt{n}\Ft{n(q-1)+r}=\Ft{nq+r}+(ab)^{n}\frac{a^{n(q-2)+r}-b^{n(q-2)+r}}{a-b}=\Ft{nq+r}+(-g)^n\Ft{n(q-2)+r}.\]

Solving this equation  for $\Ft{nq+r}$ we have  $\Ft{nq+r}=\alpha\Lt{n}\Ft{n(q-1)+r}-(-g)^n\Ft{n(q-2)+r}$. This completes the proof.
\end{proof}

\begin{lemma}\label{Propierties:FT} Let $\beta=\lc(d)$ and $\eta = \deg(d)$. Then
		\begin{enumerate}[(i)]
			\item\label{degft} $\deg \left(\Ft{k} \right)=\eta(k-1)$ and  $\lc\left(\Ft{k}\right)=\beta^{k-1}$.
			\item \label {Propierties:LT}  $\deg\left(\Lt{n}\right)=\eta n$ and   $\lc(\Lt{n})=\beta^{n}/\alpha$.
		\end{enumerate}
	\end{lemma}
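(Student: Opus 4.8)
The plan is to prove both parts by a two-step (second-order) induction on the index, using only the defining recurrences \eqref{Fibonacci;general:FT} and \eqref{Fibonacci;general:LT} together with the standing hypothesis $\omega=\deg(g)<\deg(d)=\eta$ (which also forces $\eta\ge 1$). Throughout I would take $k$ and $n$ to be positive integers; the leading-coefficient formulas are not intended for index $0$, where $\Ft{0}=0$ and $\Lt{0}=p_{0}$.

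For part \eqref{degft}, I would first dispatch the base cases: $\Ft{1}=1$ has degree $0=\eta(1-1)$ and leading coefficient $1=\beta^{0}$, and $\Ft{2}=d\,\Ft{1}+g\,\Ft{0}=d$ has degree $\eta=\eta(2-1)$ and leading coefficient $\beta=\beta^{2-1}$. For the inductive step I would fix $k\ge 3$, assume the statement for $k-1$ and $k-2$, and read off from $\Ft{k}=d\,\Ft{k-1}+g\,\Ft{k-2}$ that $\deg(d\,\Ft{k-1})=\eta+\eta(k-2)=\eta(k-1)$ while $\deg(g\,\Ft{k-2})=\omega+\eta(k-3)$. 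The decisive inequality is $\omega+\eta(k-3)<\eta(k-1)$, which holds because $\omega<\eta\le 2\eta$; hence $d\,\Ft{k-1}$ strictly dominates, no top-degree cancellation occurs, and $\deg(\Ft{k})=\eta(k-1)$ with $\lc(\Ft{k})=\lc(d)\,\lc(\Ft{k-1})=\beta\cdot\beta^{k-2}=\beta^{k-1}$.

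For part \eqref{Propierties:LT}, the inductive mechanism is identical; only the base cases change. Since $d=\alpha p_{1}$ with $\alpha$ a nonzero integer constant, $\Lt{1}=p_{1}=d/\alpha$ has degree $\eta=\eta\cdot 1$ and leading coefficient $\beta/\alpha$, and $\Lt{2}=d\,\Lt{1}+g\,\Lt{0}=d^{2}/\alpha+g\,p_{0}$ has degree $2\eta$ (because $\deg(g\,p_{0})=\omega<\eta<2\eta$) and leading coefficient $\beta^{2}/\alpha$. For $n\ge 3$ I would apply $\Lt{n}=d\,\Lt{n-1}+g\,\Lt{n-2}$ and the same degree comparison, namely $\deg(d\,\Lt{n-1})=\eta n$ against $\deg(g\,\Lt{n-2})=\omega+\eta(n-2)<\eta n$, to conclude $\deg(\Lt{n})=\eta n$ and $\lc(\Lt{n})=\beta\cdot(\beta^{n-1}/\alpha)=\beta^{n}/\alpha$.

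I do not anticipate a real obstacle here; the one point deserving care is verifying that the ``error term'' $g\,\Ft{k-2}$ (resp. $g\,\Lt{n-2}$) always has strictly smaller degree than the main term, so that leading coefficients never cancel — and that is precisely what the hypothesis $\deg(g)<\deg(d)$, in force for all GFP considered in the paper, guarantees. An alternative would be to extract both formulas from the Binet representations $\Ft{n}=(a^{n}-b^{n})/(a-b)$ and $\Lt{n}=(a^{n}+b^{n})/\alpha$, but that would require tracking the degrees and leading terms of $a$, $b$, and $a-b$ over the splitting field, which is messier than the direct induction above, so I would not take that route.
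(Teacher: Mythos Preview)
Your proposal is correct and follows essentially the same approach as the paper: induction on the index using the recurrence and the standing hypothesis $\deg(g)<\deg(d)$ to rule out top-degree cancellation. The only cosmetic difference is that you run a clean two-step induction with explicit base cases $k=1,2$ (resp.\ $n=1,2$), whereas the paper runs a one-step induction and invokes the monotonicity $\deg(\Ft{n})\ge\deg(\Ft{n-1})$ without separate justification; your version is arguably tidier.
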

	
	\begin{proof} We use mathematical induction to prove all parts. We prove Part \eqref{degft}. Let $P(k)$ be the statement:
    $$\deg\left(\Ft{k}\right)=\eta(k-1) \text{ for every } k\ge 1.$$
	The basis step, $P(1)$, is clear, so we suppose that $P(k)$ is true for $k=t$, where  $t>1$. Thus, we suppose that
	$\deg\left(\Ft{t}\right)=\eta(t-1)$ and we prove $P(t+1)$.
	We know that $\deg(\Ft{n})\ge \deg(\Ft{n-1})$ for $n\ge1$. This, $\deg\left(d \right)>\deg\left(g \right)$, and \eqref {Fibonacci;general:FT} imply
	 $$\deg\left(\Ft{t+1}\right)=\deg\left(d \Ft{t}\right)= \deg(d )+\deg\left(\Ft{t}\right)=\eta+\eta(t-1)=\eta t.$$
		
	We  now prove the second half of Part \eqref{degft}. Let $Q(k)$ be the statement: $$\lc(\Ft{k})= \beta^{k-1} \text{ for every } k\ge 1.$$
	The basis step, $Q(1)$, is clear, so we suppose that $Q(k)$ is true for $k=t$, where  $t>1$. Thus, we suppose that
	$\lc \left(\Ft{t}\right)= \beta^{t-1}$ and we prove $Q(t+1)$. We know that $\deg(\Ft{n})\ge \deg(\Ft{n-1})$ for $n\ge1$. This,
	$\deg\left(d \right)>\deg\left(g \right)$, and \eqref {Fibonacci;general:FT} imply
    $\lc(\Ft{t+1})=\lc(d)\lc(\Ft{t})=\lc(d) \beta^{t-1}= \beta\beta^{t-1}=\beta^{t}$.
	
	We prove Part \eqref{Propierties:LT}. Let $H(n)$ be the statement:  $\deg\left(\Lt{n}\right)=\eta n$ for every $n>0$. 
	It is easy to see that  $H(1)$ is true.  Suppose that $H(n)$ is true for some $n=k>1$. Thus, suppose that $\deg\left(\Lt{k}\right)=\eta k$ and 
	we prove $H(k+1)$. Since $\Lt{k+1}=d\Lt{k}+g\Lt{k-1}$ and $\deg\left(d\right)>\deg\left(g\right)$,
	we have $\deg\left(\Lt{k+1}\right)=\deg \left(d\right)+\deg\left(\Lt{k}\right)=\eta+\eta k=\eta(k+1)$. This proves the first half of Part \eqref{Propierties:LT}.
		
	We  now prove the second half of Part \eqref{Propierties:LT}.  Let $N(n)$ be the statement:  $\lc(\Lt{n})=\beta^{n}$ for every $n>0$. 
	(for simplicity we suppose that $\alpha =1$).  It is easy to verify that $\lc(\Lt{1})=\beta^{1}$. Suppose that $N(n)$ is true for some $n=k>1$. Thus, suppose that 
		$\lc(\Lt{k})=\beta^{k}$.  Since  $\Lt{k+1}=d\Lt{k}+g\Lt{k-1}$ and  $\deg\left(d\right)>\deg\left(g\right)$, we have
		$\lc(\Lt{k+1})=\lc(d)\lc(\Lt{k})$. This and the inductive hypothesis imply that   $\lc(\Lt{k+1})=\beta \beta^{k}=\beta^{k+1}$.
	\end{proof}
	
Proposition \ref{second:main:thm} plays an important role in this paper. This in connection with Lemma \ref{Properties:res} Part \eqref{Fundamental:propierty:resultant} gives criterions to determine whether or not the resultant of two GFPs is equal to zero (see 
Propositions \ref{Cor:1},  \ref{Cor:2}, and \ref{cor3}).
	
Recall that definition of  $E_{2}(n)$ was given in Section \ref{Main:Results} on page \pageref{adic:order}.

\begin{proposition}[\cite{florezHiguitaMuk2018}] \label{second:main:thm}
		If $m, n\in\mathbb{Z}_{>0}$ and $\delta=\gcd(m,n)$, then these hold
\begin{enumerate}[(i)]
\item  \label{gcd:Two:Fib}  $\gcd(\Ft{m},\Ft{n})=1$ if and only if $\delta=1$. 
\item \label{gcd:two:lucas}
		\[ \gcd \left(\Lt{m},\Lt{n} \right)=
		\begin{cases} \Lt{\delta}  & \mbox{if }\;   E_{2}(m)= E_{2}(n);\\
				      \gcd \left(\Lt{\delta},\Lt{0} \right)   & \mbox{otherwise}.
		\end{cases}
		\]
	
\item \label{gcd:LnFm}
		\[ \gcd(\Lt{n},\Ft{m})=
		\begin{cases} \Lt{\delta} & \mbox{if }E_2(m)>E_{2}(n);\\
				     1 & \mbox{otherwise.}
		\end{cases}
		\]	
\end{enumerate}
\end{proposition}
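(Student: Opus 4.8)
The plan is to imitate the Euclidean algorithm at the level of indices: use the ``division with remainder'' identities of Propositions~\ref{main:lemma} and~\ref{Dic2} to reduce the larger index modulo the smaller one, use the mixed identities of Proposition~\ref{Fibonacci:Lucas:Identities} to pass between the two families, and then read off the surviving common factor from the degree formulas of Lemma~\ref{Propierties:FT}.

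For Part~\eqref{gcd:Two:Fib} I would first prove, by a short simultaneous induction on $k$ based on the recurrence~\eqref{Fibonacci;general:FT} and on $\gcd(d,g)=1$ (with base cases $\Ft{1}=1$ and $\Ft{2}=d$), that $\gcd(\Ft{k},\Ft{k-1})=1$ and $\gcd(\Ft{k},g)=1$ for every $k\ge 1$. Since $\Ft{m}\mid\Ft{mq}$ (immediate from the Binet formula~\eqref{bineformulauno}), this gives $\gcd(\Ft{m},\Ft{mq-1})\mid\gcd(\Ft{mq},\Ft{mq-1})=1$. Now if $n=mq+r$, Proposition~\ref{main:lemma} gives $\Ft{n}=\Ft{m}T+g\,\Ft{mq-1}\Ft{r}$; hence any common divisor of $\Ft{m}$ and $\Ft{n}$ divides $g\,\Ft{mq-1}\Ft{r}$ and, being coprime to $g$ and to $\Ft{mq-1}$, divides $\Ft{r}$, while conversely any common divisor of $\Ft{m}$ and $\Ft{r}$ divides $\Ft{n}$. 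Thus $\gcd(\Ft{m},\Ft{n})=\gcd(\Ft{m},\Ft{r})$, and iterating this step tracks the ordinary Euclidean algorithm on $(m,n)$, so $\gcd(\Ft{m},\Ft{n})=\Ft{\delta}$ up to a unit. When $\delta=1$ this is $\Ft{1}=1$; when $\delta\ge 2$, Lemma~\ref{Propierties:FT}, Part~\eqref{degft}, gives $\deg(\Ft{\delta})=\eta(\delta-1)\ge 1$, so the gcd is not a unit. This proves the equivalence.

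For Parts~\eqref{gcd:two:lucas} and~\eqref{gcd:LnFm} I would record first the divisibility facts that make $\Lt{\delta}$ a candidate common factor, all read off the Binet formulas~\eqref{bineformulauno} and~\eqref{bineformulados}: for $a\ge 1$, $\Lt{a}$ divides $\Lt{b}$ exactly when $b/a$ is an odd integer, and $\Lt{a}$ divides $\Ft{b}$ exactly when $b/a$ is an even integer. Writing $m=\delta m'$, $n=\delta n'$ with $\gcd(m',n')=1$, the condition $E_{2}(m)=E_{2}(n)$ is equivalent to ``$m'$ and $n'$ are both odd,'' in which case $\Lt{\delta}$ divides both $\Lt{m}$ and $\Lt{n}$; likewise $E_{2}(m)>E_{2}(n)$ is equivalent to ``$m/\delta$ even and $n/\delta$ odd,'' in which case $\Lt{\delta}$ divides $\Ft{m}$ and divides $\Lt{n}$. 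That these candidates are in fact the full gcds I would get by running the reductions of Proposition~\ref{Dic2} for~\eqref{gcd:two:lucas} and of Proposition~\ref{Fibonacci:Lucas:Identities} for~\eqref{gcd:LnFm}: writing the larger index as $mq+r$ with $r<m$, each step replaces the gcd by $\gcd(\Lt{m},(\text{power of }g)\,\Lt{m-r})$ or $\gcd(\Lt{m},(\text{power of }g)\,\Lt{r})$ according to the parity of $q$, and the power of $g$ drops out since $\gcd(\Lt{k},g)=1$ (same induction as above, now using $\gcd(p_{0},g)=1$ from~\eqref{extra:condition} and $d=\alpha p_{1}$). Organizing the whole computation as an induction on $\max(m,n)$, with the statement phrased in terms of $E_{2}(m)$ versus $E_{2}(n)$ and carrying the boundary term $\Lt{0}=p_{0}$ along, the indices collapse to $\delta$ and $0$, yielding $\Lt{\delta}$ in the balanced case and $\gcd(\Lt{\delta},\Lt{0})$ otherwise.

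The routine part is the inductive coprimality lemmas $\gcd(\Ft{k},g)=\gcd(\Lt{k},g)=1$ and $\gcd(\Ft{k},\Ft{k-1})=1$, together with the degree bookkeeping of Lemma~\ref{Propierties:FT}. The genuine obstacle is the $2$-adic parity bookkeeping in Parts~\eqref{gcd:two:lucas} and~\eqref{gcd:LnFm}: the Euclidean step $(m,n)\mapsto(m,\,n\bmod m)$ does not interact transparently with $E_{2}$, the reduction in Proposition~\ref{Dic2} branches on the parity of the quotient and in the odd branch replaces the residue $r$ by $m-r$, and the boundary value $\Lt{0}=p_{0}$ with $|p_{0}|\in\{1,2\}$ must be tracked all the way down. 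Showing that this bookkeeping collapses precisely to the dichotomy stated in terms of $E_{2}(m)$ and $E_{2}(n)$ is the delicate step, and it is also where the hypotheses~\eqref{extra:condition} on $p_{0}$ are used.
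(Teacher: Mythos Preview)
The paper does not prove Proposition~\ref{second:main:thm}: it is quoted verbatim from \cite{florezHiguitaMuk2018} and used as a black box, so there is no ``paper's own proof'' to compare against. Your sketch is therefore strictly more than what the paper provides here.

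That said, your outline is the standard strong-divisibility argument and is essentially what the cited paper \cite{florezHiguitaMuk2018} does. For Part~\eqref{gcd:Two:Fib} your reduction via Proposition~\ref{main:lemma} and the coprimality lemmas $\gcd(\Ft{k},g)=\gcd(\Ft{k},\Ft{k-1})=1$ is correct and complete. For Parts~\eqref{gcd:two:lucas} and~\eqref{gcd:LnFm} your plan is right in spirit, and you correctly identify the delicate point: the parity branching in Proposition~\ref{Dic2} (with the swap $r\mapsto m-r$ in the odd-quotient case) must be shown to preserve the $E_{2}$-dichotomy at every step of the descent. You have flagged this as the obstacle rather than carried it out, so what you have is a proof strategy rather than a proof; but since the present paper treats the whole proposition as an imported result, that is already more detail than the paper itself supplies.
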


\subsection{Properties of the resultant and some resultants of GFP of Fibonacci-type}
In this section we give some classic properties of the resultant and some results needed to prove Theorem \ref{Resultants:Ft}.

Let $f $ and $h$ be polynomials where $a_n=\lc(f )$, $b_m=\lc(h )$, $n=\deg(f )$ and $m=\deg(h)$. The resultant of two
polynomials is defined using the Sylvester determinant (See for example \cite{Akritas,Jacobs,Sylvester}).
The next properties are well known and may be found in \cite{Basu}. For a complete development of the theory of the
resultant of polynomials see \cite{Gelfand:Discriminants}.
Most of the parts Lemma \ref{Properties:res} can be found in \cite{Basu, Serge:algebra}. Note that if $k$ is a constant, then
$\ress{k}{f}=\ress{f}{k}=k^{\deg(f)}$.

\begin{lemma}[\cite{Jacobs}]\label{Properties:res}
Let $f $, $h$, $p$, and $q$ be polynomials in $\mathbb{Q}[x]$. If $n=\deg(f )$, $m=\deg(h)$, $a_n=\lc(f )$  and $b_m=\lc(h)$, then these hold
\begin{enumerate}[(i)]
\item \label{Rbasic-1} $\ress{f}{h}  = (-1)^{nm}\ress{h}{f}$,
\item \label{Rbasic-2} $\ress{f}{ph} =\ress{f}{p}\ress{f}{h}$,
\item \label{Rbasic-2add} $\ress{f}{p^n} =\ress{f}{p}^n$,
\item \label{Rbasic-3}  if $G =f q +h$ and $r=\deg(G)$, then $\ress{f}{G}  =a_{n}^{r-m}\ress{f}{h}$,
\item \label{Fundamental:propierty:resultant} The $\ress{f}{h}=0$ if and only if $f$ and $h$ have a common divisor of positive degree.
\end{enumerate}
\end{lemma}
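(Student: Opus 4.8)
The plan is to deduce all five statements from the factored form of the resultant recorded right after \eqref{Syl}: writing $\{x_i\}_{i=1}^n$ for the roots of $f$ and $\{y_j\}_{j=1}^m$ for the roots of $h$ in $\mathbb{C}$ (listed with multiplicity), one has
\[
\ress{f}{h}=a_n^m b_m^n\prod_{i=1}^n\prod_{j=1}^m (x_i-y_j)=a_n^m\prod_{i=1}^n h(x_i)=b_m^n\prod_{j=1}^m f(y_j),
\]
with the convention that an empty product is $1$, which also takes care of the degenerate cases $n=0$ or $m=0$ (consistent with $\ress{k}{f}=k^{\deg f}$ for a constant $k$, already noted above). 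Since $f$ and $h$ are nonzero, $a_n\neq 0$ and $b_m\neq 0$, so these expressions are genuine identities in $\mathbb{C}$, and every manipulation below is elementary algebra there.

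For \eqref{Rbasic-1} I would expand $\ress{h}{f}=b_m^n a_n^m\prod_{j,i}(y_j-x_i)$ and pull a sign out of each of the $nm$ factors, so that $\prod_{j,i}(y_j-x_i)=(-1)^{nm}\prod_{i,j}(x_i-y_j)$ and hence $\ress{h}{f}=(-1)^{nm}\ress{f}{h}$, which is \eqref{Rbasic-1} since $(-1)^{nm}$ is its own inverse. For \eqref{Rbasic-2} I would use $\ress{f}{ph}=a_n^{\deg(ph)}\prod_i(ph)(x_i)$; because $\mathbb{Q}[x]$ is an integral domain, $\deg(ph)=\deg p+\deg h$, and $\prod_i(ph)(x_i)=\big(\prod_i p(x_i)\big)\big(\prod_i h(x_i)\big)$, so splitting the power of $a_n$ accordingly gives $\ress{f}{ph}=\ress{f}{p}\ress{f}{h}$. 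Part \eqref{Rbasic-2add} then follows from \eqref{Rbasic-2} by an easy induction on the exponent, with base case $\ress{f}{p^0}=\ress{f}{1}=1$.

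For \eqref{Rbasic-3} the key observation is that $G(x_i)=f(x_i)q(x_i)+h(x_i)=h(x_i)$ at every root $x_i$ of $f$, so $\prod_i G(x_i)=\prod_i h(x_i)$ and
\[
\ress{f}{G}=a_n^{\,r}\prod_i G(x_i)=a_n^{\,r-m}\Big(a_n^{\,m}\prod_i h(x_i)\Big)=a_n^{\,r-m}\ress{f}{h},
\]
where the exponent $r-m$ is permitted to be negative, which does no harm because $a_n\in\mathbb{Q}\setminus\{0\}$. Finally, for \eqref{Fundamental:propierty:resultant}: from the factored form, and again using $a_n,b_m\neq 0$, $\ress{f}{h}=0$ holds exactly when $x_i=y_j$ for some $i,j$, i.e.\ when $f$ and $h$ share a root $\alpha\in\mathbb{C}$; and $f$ and $h$ share a root in $\mathbb{C}$ if and only if they have a common divisor of positive degree in $\mathbb{Q}[x]$ — one direction by taking the minimal polynomial of $\alpha$ over $\mathbb{Q}$, the other because any nonconstant common divisor has a root in $\mathbb{C}$.

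None of these steps is deep; the lemma is entirely standard. The only points needing attention are bookkeeping: tracking multiplicities of roots in the product formula, splitting the powers of the leading coefficients correctly (in particular allowing a negative exponent of $a_n$ in \eqref{Rbasic-3}), and, in \eqref{Fundamental:propierty:resultant}, the routine but essential translation between ``common root in $\mathbb{C}$'' and ``common divisor of positive degree in $\mathbb{Q}[x]$''. One could instead argue directly from the Sylvester determinant \eqref{Syl} — \eqref{Rbasic-1} by row interchanges, \eqref{Rbasic-3} by column operations, and \eqref{Fundamental:propierty:resultant} via the equivalence that $\ress{f}{h}=0$ iff there exist nonzero $u,v$ with $\deg u<m$, $\deg v<n$, and $uf+vh=0$ — but the route through the root formula is shorter and is the one I would write up.
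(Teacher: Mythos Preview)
Your proof is correct. The paper does not actually prove this lemma: it is stated with a citation to \cite{Jacobs} and the surrounding text notes that ``the next properties are well known and may be found in \cite{Basu}.'' So there is no in-paper argument to compare against; you have supplied a clean proof where the paper simply invokes the literature.
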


\begin{lemma} \label{lemma:g:factor} For $m$ and $n$ in $\mathbb{Z}_{\ge 0}$ these hold
\begin{enumerate} [(i)]
\item  \label{resultantdg} $\ress{g }{\Ft{n}}=\rho^{n-1}$,
\item  \label{gfactor:FIbo:Fibo} $\ress{\Ft{m}}{g\Ft{n}}=(-1)^{\omega\eta(m-1)}\rho^{m-1}\ress{\Ft{m}}{\Ft{n}}$,
\item  \label{gfactor:Lucas:Lucas} $\ress{\Lt{m}}{g\Lt{n}}=(-1)^{\omega\eta m}\rho^{m}\ress{\Lt{m}}{\Lt{n}}$.
\end{enumerate}
\end{lemma}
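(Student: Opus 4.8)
The plan is to prove part~\eqref{resultantdg} first and then obtain parts~\eqref{gfactor:FIbo:Fibo} and~\eqref{gfactor:Lucas:Lucas} as formal consequences of it, using only the standard resultant identities collected in Lemma~\ref{Properties:res}.

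For part~\eqref{resultantdg} I would induct on $n$, with Lemma~\ref{Properties:res}\eqref{Rbasic-3} doing the work (it is essentially tailor-made for recurrences of this shape). The base case $n=1$ is $\ress{g}{\Ft{1}}=\ress{g}{1}=1=\rho^{0}$. For $n\ge 2$, read the recurrence $\Ft{n}=g\,\Ft{n-2}+d\,\Ft{n-1}$ as ``$fq+h$'' with $f=g$, $q=\Ft{n-2}$, $h=d\,\Ft{n-1}$. By Lemma~\ref{Propierties:FT}\eqref{degft}, $\deg(d\,\Ft{n-1})=\eta+\eta(n-2)=\eta(n-1)=\deg(\Ft{n})$, so the exponent $r-m$ in Lemma~\ref{Properties:res}\eqref{Rbasic-3} vanishes and $\ress{g}{\Ft{n}}=\ress{g}{d\,\Ft{n-1}}$. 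Multiplicativity, Lemma~\ref{Properties:res}\eqref{Rbasic-2}, then gives $\ress{g}{d\,\Ft{n-1}}=\ress{g}{d}\,\ress{g}{\Ft{n-1}}=\rho\,\ress{g}{\Ft{n-1}}$, and the inductive hypothesis closes the argument. (The same computation is transparent from the product formula for the resultant: at a root $\xi$ of $g$ the recurrence collapses to $\Ft{k}(\xi)=d(\xi)\Ft{k-1}(\xi)$, hence $\Ft{n}(\xi)=d(\xi)^{\,n-1}$ and $\ress{g}{\Ft{n}}=\lambda^{\eta(n-1)}\prod_{\xi}d(\xi)^{\,n-1}=\big(\ress{g}{d}\big)^{\,n-1}$.)

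For part~\eqref{gfactor:FIbo:Fibo}, peel off the factor $g$ with Lemma~\ref{Properties:res}\eqref{Rbasic-2}: $\ress{\Ft{m}}{g\,\Ft{n}}=\ress{\Ft{m}}{g}\,\ress{\Ft{m}}{\Ft{n}}$. Swapping the arguments of $\ress{\Ft{m}}{g}$ by Lemma~\ref{Properties:res}\eqref{Rbasic-1} contributes the sign $(-1)^{\deg(\Ft{m})\deg(g)}=(-1)^{\omega\eta(m-1)}$ (using Lemma~\ref{Propierties:FT}\eqref{degft}), and part~\eqref{resultantdg} gives $\ress{g}{\Ft{m}}=\rho^{\,m-1}$; combining these yields the stated formula. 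Part~\eqref{gfactor:Lucas:Lucas} is entirely parallel: $\ress{\Lt{m}}{g\,\Lt{n}}=\ress{\Lt{m}}{g}\,\ress{\Lt{m}}{\Lt{n}}$, the argument swap now costs $(-1)^{\deg(\Lt{m})\deg(g)}=(-1)^{\omega\eta m}$ since $\deg(\Lt{m})=\eta m$ by Lemma~\ref{Propierties:FT}\eqref{Propierties:LT}, and in place of part~\eqref{resultantdg} one uses the Lucas analogue $\ress{g}{\Lt{m}}$, proved by the same induction (or product-formula) computation but now with $\lc(\Lt{k})=\beta^{k}/\alpha$, $\deg(\Lt{k})=\eta k$ from Lemma~\ref{Propierties:FT}\eqref{Propierties:LT} and base value $\Lt{1}=p_{1}$, $d=\alpha p_{1}$.

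None of this is deep. The one place that genuinely needs care is the degree bookkeeping in the inductive step of part~\eqref{resultantdg}: one must check that $\deg(d\,\Ft{n-1})$ equals $\deg(\Ft{n})$ (and, in the Lucas case, that $\deg(d\,\Lt{n-1})=\deg(\Lt{n})$), so that Lemma~\ref{Properties:res}\eqref{Rbasic-3} introduces no spurious power of $\lc(g)$; this is exactly where the standing hypothesis $\deg(g)<\deg(d)$ is used. The remaining work is tracking the signs coming from Lemma~\ref{Properties:res}\eqref{Rbasic-1} and the constants arising from the initial term of the Lucas sequence.
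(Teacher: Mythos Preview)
Your argument is correct and matches the paper's proof essentially step for step: induction on $n$ for part~\eqref{resultantdg} via Lemma~\ref{Properties:res}\eqref{Rbasic-3} and~\eqref{Rbasic-2} (with the same degree check that kills the power of $\lambda$), then multiplicativity plus the sign from Lemma~\ref{Properties:res}\eqref{Rbasic-1} for parts~\eqref{gfactor:FIbo:Fibo} and~\eqref{gfactor:Lucas:Lucas}. Your parenthetical product-formula derivation of part~\eqref{resultantdg} is a nice alternative viewpoint not in the paper, and your remark that part~\eqref{gfactor:Lucas:Lucas} rests on the Lucas analogue $\ress{g}{\Lt{m}}$ is exactly right --- the paper in fact records and proves that identity separately as Lemma~\ref{Lemmas:for:Lucas}\eqref{resultant:dgL} (for $\Lt{0}=2$), so be aware that the constant $\alpha$ enters when $\Lt{0}=1$.
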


\begin{proof}  We prove Part \eqref{resultantdg} using mathematical induction. Let $P(n)$ be the statement:
    $$\ress{g }{\Ft{n}}=\rho^{n-1} \text{ for every } n\ge 1.$$
	Since $\Ft{1}=1$ the basis step, $P(1)$, is clear. Suppose that $P(n)$ is true for $n=k$, where  $k>1$. Thus, suppose that
	$\ress{g }{\Ft{k}}=\rho^{k-1}$, and we prove $P(k+1)$. From \eqref {Fibonacci;general:FT} and Lemma \ref{Properties:res}
	Parts \eqref{Rbasic-2} and \eqref{Rbasic-3}  we have
		\[
			\ress{g}{\Ft{k+1}}=\ress{g}{d\Ft{k}+g\Ft{k-1}}=\lambda^{\eta k-(\eta+\eta(k-1))}\ress{g}{d}\ress{g}{\Ft{k}}.
		\]
		This and $P(k)$ imply
		$
			\ress{g}{\Ft{n}}=\ress{g}{d}\ress{g}{\Ft{n-1}}= \rho^{n-1}.
		$

 We prove Part \eqref{gfactor:FIbo:Fibo}, the proof of Part \eqref{gfactor:Lucas:Lucas} is similar and it is omitted.
From Lemma \ref{Properties:res} Part \eqref{Rbasic-1} and Part \eqref{Rbasic-2}, and Lemma \ref{Propierties:FT} Part \eqref{degft}, we have
\begin{eqnarray*}
\ress{\Ft{m}}{g\Ft{n}}&=&\ress{\Ft{m}}{g}\ress{\Ft{m}}{\Ft{n}}\\
					  &=&(-1)^{\omega\eta(m-1)}\ress{g}{\Ft{m}}\ress{\Ft{m}}{\Ft{n}}\\
                      &=&(-1)^{\omega\eta(m-1)}\rho^{m-1}\ress{\Ft{m}}{\Ft{n}}.
\end{eqnarray*}
This completes the proof.
\end{proof}

\begin{proposition}\label{Cor:1} Let $m, n\in \mathbb{Z}_{>0}$. Then  $\gcd \left(m,\,n \right)=1$  if and only if   $\ress{\Ft{m}}{\Ft{n}}\neq 0$.
	\end{proposition}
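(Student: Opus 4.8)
The plan is to reduce the statement to two facts already proved in the excerpt: the algebraic criterion for the vanishing of a resultant (Lemma \ref{Properties:res} Part \eqref{Fundamental:propierty:resultant}) and the arithmetic criterion for two Fibonacci-type polynomials to be coprime (Proposition \ref{second:main:thm} Part \eqref{gcd:Two:Fib}). So this proposition is essentially a packaging result, bridging the purely algebraic notion $\ress{\Ft{m}}{\Ft{n}}\ne 0$ to the purely numerical condition $\gcd(m,n)=1$; it is exactly what is needed to handle the ``$\gcd(m,n)>1$'' case of Theorem \ref{Resultants:Ft}.

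First I would observe that for $m,n\in\mathbb{Z}_{>0}$ the resultant $\ress{\Ft{m}}{\Ft{n}}$ is defined: by Lemma \ref{Propierties:FT} Part \eqref{degft} we have $\deg(\Ft{k})=\eta(k-1)\ge 0$ and $\lc(\Ft{k})=\beta^{k-1}\ne 0$, so $\Ft{m}$ and $\Ft{n}$ are nonzero polynomials with well-defined degrees. Then Lemma \ref{Properties:res} Part \eqref{Fundamental:propierty:resultant} gives $\ress{\Ft{m}}{\Ft{n}}=0$ if and only if $\Ft{m}$ and $\Ft{n}$ have a common divisor of positive degree in $\mathbb{Q}[x]$; equivalently, $\ress{\Ft{m}}{\Ft{n}}\ne 0$ if and only if $\gcd(\Ft{m},\Ft{n})=1$ (a unit). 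Next I would apply Proposition \ref{second:main:thm} Part \eqref{gcd:Two:Fib}, namely $\gcd(\Ft{m},\Ft{n})=1\iff\gcd(m,n)=1$. Chaining the two equivalences yields $\ress{\Ft{m}}{\Ft{n}}\ne 0\iff\gcd(\Ft{m},\Ft{n})=1\iff\gcd(m,n)=1$, which is the claim.

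Since both ingredients are already available, there is no genuine obstacle; the only point meriting a sentence of care is the boundary case $m=1$ or $n=1$, where $\Ft{1}=1$ is a unit and $\ress{\Ft{1}}{\Ft{n}}=1^{\deg(\Ft{n})}=1\ne 0$, in agreement with $\gcd(1,n)=1$. These cases are already covered by the cited statements, so no separate argument is required.
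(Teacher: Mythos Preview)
Your proposal is correct and follows essentially the same approach as the paper: both combine Lemma \ref{Properties:res} Part \eqref{Fundamental:propierty:resultant} with Proposition \ref{second:main:thm} Part \eqref{gcd:Two:Fib} to chain the equivalences $\ress{\Ft{m}}{\Ft{n}}\ne 0 \iff \gcd(\Ft{m},\Ft{n})=1 \iff \gcd(m,n)=1$. Your version is slightly more detailed (checking nonzero leading coefficients and the boundary cases $m=1$ or $n=1$), but the argument is the same.
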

		
\begin{proof}  Proposition \ref{second:main:thm} Part \eqref{gcd:Two:Fib}  and Lemma \ref{Properties:res} Part
	\eqref{Fundamental:propierty:resultant} imply that $\ress{\Ft{m}}{\Ft{n}}\neq 0$ if and only if $\gcd \left(m,\,n \right)=1$.
\end{proof} 
		
	\begin{lemma} \label{lemma:Resultant:FibonaciT} If  $m$, $n$ and $q$ are positive integers and $\Ft{t}\ne 0$ for every $t$, then these hold
	 	\begin{enumerate} [(i)]
	 		\item \label{resnn-1} $ \ress{\Ft{n}}{\Ft{n-1}}=\left((-1)^{\omega\eta}\beta^{2\eta-\omega}\rho\right)^{(n-2)(n-1)/2}$,
	 		\item \label{resultant:m:mq-1} $\ress{\Ft{m}}{\Ft{mq-1}}= \left((-1)^{\eta\omega}\beta^{2\eta-\omega}\rho \right)^{(m-1)(mq-2)/2}$.
		 \end{enumerate}
	\end{lemma}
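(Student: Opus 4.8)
The plan is to prove both parts by induction, using the recurrence $\Ft{n}=d\,\Ft{n-1}+g\,\Ft{n-2}$ as the engine and feeding it into Lemma~\ref{Properties:res}(iv) and Lemma~\ref{lemma:g:factor}. It is convenient to abbreviate $c:=(-1)^{\omega\eta}\beta^{2\eta-\omega}\rho$, so that the two claimed values read $c^{(n-2)(n-1)/2}$ and $c^{(m-1)(mq-2)/2}$. One preliminary remark: since $\deg(\Ft{k})=\eta(k-1)$ by Lemma~\ref{Propierties:FT}(i), the product $\deg(\Ft{n})\deg(\Ft{n-1})=\eta^{2}(n-1)(n-2)$ is even, so Lemma~\ref{Properties:res}(i) gives $\ress{\Ft{n}}{\Ft{n-1}}=\ress{\Ft{n-1}}{\Ft{n}}$ and I may reorder the two arguments freely.

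For Part~(i) I would induct on $n$, with base case $n=2$: $\ress{\Ft{2}}{\Ft{1}}=\ress{d}{1}=1=c^{0}$. For the inductive step, view $\Ft{n}=\Ft{n-1}\cdot d+g\,\Ft{n-2}$ as $G=fq+h$ with $f=\Ft{n-1}$ and $h=g\,\Ft{n-2}$. Since $\deg(d\,\Ft{n-1})=\eta(n-1)>\omega+\eta(n-3)=\deg(g\,\Ft{n-2})$ (here $\eta>\omega$ by the standing hypothesis $\deg(d)>\deg(g)$), no leading-term cancellation occurs, $\deg(\Ft{n})=\eta(n-1)$, and Lemma~\ref{Properties:res}(iv) gives $\ress{\Ft{n-1}}{\Ft{n}}=\beta^{(n-2)(2\eta-\omega)}\ress{\Ft{n-1}}{g\,\Ft{n-2}}$ with the nonnegative exponent $\deg(\Ft{n})-\deg(g\,\Ft{n-2})=2\eta-\omega$. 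Lemma~\ref{lemma:g:factor}(ii) then rewrites the right-hand resultant as $(-1)^{\omega\eta(n-2)}\rho^{\,n-2}\ress{\Ft{n-1}}{\Ft{n-2}}$, so in total $\ress{\Ft{n}}{\Ft{n-1}}=c^{\,n-2}\ress{\Ft{n-1}}{\Ft{n-2}}$; combining with the induction hypothesis and the identity $(n-2)+(n-3)(n-2)/2=(n-2)(n-1)/2$ completes Part~(i).

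For Part~(ii) I would fix $m$ (the case $m=1$ is trivial, $\Ft{1}=1$) and induct on $q$; the base case $q=1$ is exactly Part~(i) with $n=m$. For $q\ge 2$, write $mq-1=m(q-1)+(m-1)$ and apply Proposition~\ref{main:lemma} to obtain $\Ft{mq-1}=\Ft{m}T+g\,\Ft{m(q-1)-1}\Ft{m-1}$ for some polynomial $T$. The same degree count as above (the exponent again reducing to $2\eta-\omega$) together with Lemma~\ref{Properties:res}(iv) gives $\ress{\Ft{m}}{\Ft{mq-1}}=\beta^{(m-1)(2\eta-\omega)}\ress{\Ft{m}}{g\,\Ft{m(q-1)-1}\Ft{m-1}}$. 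Expanding the right-hand resultant multiplicatively by Lemma~\ref{Properties:res}(ii) into $\ress{\Ft{m}}{g}\cdot\ress{\Ft{m}}{\Ft{m(q-1)-1}}\cdot\ress{\Ft{m}}{\Ft{m-1}}$, the first factor is $(-1)^{\eta\omega(m-1)}\rho^{\,m-1}$ (by Lemma~\ref{Properties:res}(i) and Lemma~\ref{lemma:g:factor}(i)), the last is $c^{(m-2)(m-1)/2}$ by Part~(i), and the middle factor is $c^{(m-1)(m(q-1)-2)/2}$ by the inductive hypothesis. Since $\beta^{(m-1)(2\eta-\omega)}(-1)^{\eta\omega(m-1)}\rho^{\,m-1}=c^{\,m-1}$, collecting all exponents gives $(m-1)+(m-1)(m(q-1)-2)/2+(m-2)(m-1)/2=(m-1)(mq-2)/2$, which is the asserted value.

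The substantive part is really only the recurring degree bookkeeping: at each application of Lemma~\ref{Properties:res}(iv) one must certify that the polynomial playing the role of $G$ has its full expected degree, so that no leading-coefficient cancellation occurs and the exponent $\deg(G)-\deg(h)$ stays nonnegative; every such check reduces to the standing hypothesis $\deg(d)>\deg(g)$. Beyond that, one only needs the easy small cases $n=2$, $m=1$, and $q=1$, plus routine exponent arithmetic with the single symbol $c$.
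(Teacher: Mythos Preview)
Your proof is correct and follows essentially the same approach as the paper: both parts are proved by induction (on $n$ for Part~(i), on $q$ for Part~(ii)), in each inductive step the recurrence and Proposition~\ref{main:lemma} are fed into Lemma~\ref{Properties:res}\eqref{Rbasic-3} and Lemma~\ref{lemma:g:factor}, and the degree bookkeeping produces the same exponent $2\eta-\omega$ at every step. The only cosmetic difference is your abbreviation $c=(-1)^{\omega\eta}\beta^{2\eta-\omega}\rho$, which makes the exponent arithmetic cleaner than in the paper.
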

	
	\begin{proof} We prove all parts by mathematical induction.		
Proof of Part \eqref{resnn-1}. Let $Q(n)$ be the statement:
		 $$\ress{\Ft{n}}{\Ft{n-1}}=\left((-1)^{\omega\eta}\beta^{2\eta-\omega}\rho\right)^{(n-2)(n-1)/2} \text{ for every } n\ge 2.$$
		Since $\Ft{1}=1$, the basis step, $Q(2)$, is clear. Suppose that $Q(n)$ is true for $n=k-1$, where  $k> 2$.
		Thus, suppose that $\ress{\Ft{k-1}}{\Ft{k-2}}=\left((-1)^{\omega\eta}\beta^{2\eta-\omega}\rho\right)^{(k-3)(k-2)/2}$. We prove $Q(k)$.	
		Using Lemma \ref{Propierties:FT} Part \eqref{degft}, Lemma \ref{Properties:res} Part \eqref{Rbasic-1}, and \eqref{Rbasic-3}, we get
		\begin{eqnarray*}
			\ress{\Ft{n}}{\Ft{n-1}}
			&=&(-1)^{\eta^2(n-1)(n-2)}\ress{\Ft{n-1}}{\Ft{n}}\\&=&\ress{\Ft{n-1}}{d \Ft{n-1}+g\Ft{n-2}}.
		\end{eqnarray*}
		This, Lemma \ref{Propierties:FT} Part \eqref{degft} and Lemma \ref{lemma:g:factor} Part \eqref{gfactor:FIbo:Fibo} imply
		\begin{eqnarray*}
			\ress{\Ft{n}}{\Ft{n-1}}&=&(\beta^{n-2})^{\eta(n-1)-(\omega+\eta(n-3))}\ress{\Ft{n-1}}{g\Ft{n-2}}\\
			&=&(-1)^{\omega\eta(n-2)}\beta^{(n-2)(2\eta-\omega)}\rho^{n-2}\ress{\Ft{n-1}}{\Ft{n-2}}.
		\end{eqnarray*}
Simplifying we have
		\begin{equation*}
			\ress{\Ft{n}}{\Ft{n-1}}=\left((-1)^{\omega\eta}\beta^{2\eta-\omega}  \rho \right)^{n-2}\ress{\Ft{n-1}}{\Ft{n-2}}.
		\end{equation*}
		This and $Q(k-1)$ give
		\begin{eqnarray*}
			\ress{\Ft{n}}{\Ft{n-1}}&=& \left((-1)^{\omega\eta}\beta^{2\eta-\omega}  \rho \right)^{n-2} \left(\beta^{2\eta-\omega}(-1)^{\omega\eta}\rho \right)^\frac{(n-3)(n-2)}{2}\\
			&=& \left((-1)^{\omega\eta}\beta^{2\eta-\omega} \rho \right)^\frac{(n-2)(n-1)}{2}.
		\end{eqnarray*}
		
	Proof of Part \eqref{resultant:m:mq-1}.  Let $W(q)$ be the statement:  for a fixed integer $m$ this holds
		 $$\ress{\Ft{m}}{\Ft{mq-1}}= \left((-1)^{\eta\omega}\beta^{2\eta-\omega}\rho \right)^{(m-1)(mq-2)/2} \text{ for every } q\ge 1.$$
		 From Lemma \ref{lemma:Resultant:FibonaciT} Part \eqref{resnn-1} it follows that $W(1)$ is true.  Suppose that $W(q)$ is true for
		 $q=k$, where  $k> 1$. Thus, suppose that $\ress{\Ft{m}}{\Ft{mk-1}}= \left((-1)^{\eta\omega}\beta^{2\eta-\omega}\rho \right)^{(m-1)(mk-2)/2}$.
		 We prove $W(k+1)$. From Proposition \ref{main:lemma} we know that there is a polynomial $T $ such that
		 $\Ft{m(k+1)-1}=\Ft{m}T +g \Ft{km-1}\Ft{m-1}$.
		 This, Lemma \ref{Properties:res} Part \eqref{Rbasic-2} and Part \eqref{Rbasic-3} and Lemma \ref{lemma:g:factor} Part \eqref{gfactor:FIbo:Fibo} imply
		\begin{eqnarray*}
			\ress{\Ft{m}}{\Ft{m(k+1)-1}}&=&\ress{\Ft{m}}{\Ft{m}T +g \Ft{km-1}\Ft{m-1}}\\
			&=&(\beta^{m-1})^{\eta(mk+m-2)-(\omega+\eta(mk+m-4))}\ress{\Ft{m}}{g \Ft{km-1}\Ft{m-1}}\\
			&=&\beta^{(m-1)(2\eta -\omega)}\ress{\Ft{m}}{g \Ft{km-1}\Ft{m-1}}\\
			&=&(-1)^{\omega\eta(m-1)}\beta^{(m-1)(2\eta -\omega)}\rho^{m-1}\ress{\Ft{m}}{\Ft{m-1}}  \ress{\Ft{m}}{\Ft{km-1}}.
		\end{eqnarray*}			
	From this, Part \eqref{resnn-1}, Lemma \ref{lemma:g:factor} Part \eqref{resultantdg}, and $W(k)$ we conclude
		\begin{eqnarray*}
			\ress{\Ft{m}}{\Ft{mk+m-1}}&=& \left((\beta^{m-1})^{2\eta -\omega}\ress{\Ft{m}}{g } \right)^k \ress{\Ft{m}}{\Ft{m-1}} ^{k+1}\\
			&=& \left((-1)^{\omega\eta(m-1)}\rho^{m-1}(\beta^{m-1})^{2\eta -\omega} \right)^k  \left((-1)^{\omega\eta}\beta^{2\eta-\omega}\rho \right)^{\frac{(m-2)(m-1)(k+1)}{2}}\\
			&=& \left((-1)^{\omega\eta}\beta^{2\eta -\omega} \rho\right)^{k(m-1)}  \left((-1)^{\omega\eta}\beta^{2\eta-\omega}\rho \right)^{\frac{(m-2)(m-1)(k+1)}{2}}.
		\end{eqnarray*}
		Simplifying the  last expression, we have that  $$\ress{\Ft{m}}{\Ft{mk+m-1}}=
		 \left((-1)^{\eta\omega}\beta^{2\eta-\omega}\rho \right)^{\frac{(m-1)(mk+m-2)}{2}}.
		$$  This completes the proof.
	\end{proof}
\subsection{Proof of Theorem \ref{Resultants:Ft} }\label{Proof:Resultants:Main:FT}
We now prove the resultant of two Fibonacci-type polynomials.

\begin{proof}[Proof of Theorem \ref{Resultants:Ft}]  Let $A$ be the set of all $i \in \mathbb{Z}_{> 0}$  such that for  every $j \in \mathbb{Z}_{> 0}$ this holds
\begin{equation}\label{resulFnm}
\ress{\Ft{i}}{\Ft{j}}=
		\begin{cases} 0 & \mbox{ if }\;   \gcd(m,n)>1;\\
				 \left((-1)^{\eta\omega}\beta^{2\eta-\omega}\rho\right)^{\frac{(i-1)(j-1)}{2}}& \text{ otherwise.}
		\end{cases}
\end{equation}

Since $1 \in A$, we have that $A\ne \emptyset$. The following claim completes the proof of the Theorem.

\noindent  {\bf Claim.} $A=\mathbb{Z}_{> 0}$.

\noindent \emph{Proof of Claim.}
Suppose $B:= \mathbb{Z}_{> 0} \setminus A$ is a non-empty set. Let $n\ne 1$ be the least element of $B$. So, there is
$h\in \mathbb{Z}_{> 0}$ such that $\ress{\Ft{n}}{\Ft{h}}$ does not satisfy Property \eqref{resulFnm} (if $m=n$, then $\ress{\Ft{i}}{\Ft{j}}=0$).
Let $m$ be the least element of the non-empty set   $H=\{h \in \mathbb{Z}_{>0}\mid \ress{\Ft{n}}{\Ft{h}} \text{ does not satisfy } \eqref{resulFnm}\}$.
Note that Proposition \ref{Cor:1} and  \eqref{resulFnm} imply that  that  $\gcd(m,n)=1$. We now consider two cases.

 \textbf{Case $m<n$.} Since $n$ is the minimum element of $B$, $m \in A$. Either $m$ or $n$ is odd, because  $\gcd(m,n)=1$.
 We know, from Lemma \ref{Propierties:FT} Part \eqref{degft},  that $\deg(\Ft{m})=\eta(m-1)$. This implies that  $\ress{\Ft{n}}{\Ft{m}}=\ress{\Ft{m}}{\Ft{n}}$.
Since  $m\in A$, we have that \eqref{resulFnm} holds for $j\in \mathbb{Z}_{> 0}$, in particular  \eqref{resulFnm} holds when  $j=n$. That is a contradiction.

\textbf{Case $n<m$.}  The Euclidean algorithm and $\gcd(m,n)=1$ guarantee that there are  $q, r \in \mathbb{Z}$ such that $m=nq+r$ with  $0<r<n$.
We now can proceed analogously to the proof of Lemma \ref{lemma:Resultant:FibonaciT} Part \eqref{resultant:m:mq-1}. From the Euclidean algorithm, Proposition \ref{main:lemma} and
Lemma \ref{Properties:res} Part  \eqref{Rbasic-3} we have
		\begin{eqnarray*}
			\ress{\Ft{n}}{\Ft{m}}
			&=& \ress{\Ft{n}}{\Ft{nq+r}}\\
			&=& \ress{\Ft{n}}{\Ft{n}T+g\Ft{nq-1}\Ft{r}}\\
			&=& (\beta^{n-1})^{\eta(m-1)-(\omega+\eta(nq-2+r-1))}\ress{\Ft{n}}{g\Ft{nq-1}\Ft{r}}.
		\end{eqnarray*}
This, Lemma \ref{Properties:res} Part  \eqref{Rbasic-2} and Lemma  \ref{lemma:g:factor} Part \eqref{gfactor:FIbo:Fibo} imply 		
\begin{eqnarray}
			\ress{\Ft{n}}{\Ft{m}}	&=& \beta^{(n-1)(2\eta-\omega)}\ress{\Ft{n}}{g\Ft{r}}\ress{\Ft{n}}{\Ft{nq-1}}\nonumber\\
			&=&(-1)^{\omega\eta(n-1)}\beta^{(n-1)(2\eta-\omega)}\rho^{n-1} \big((-1)^{\eta\omega}\beta^{2\eta-\omega}\rho\big)^{\frac{(n-1)(nq-2)}{2}} \ress{\Ft{n}}{\Ft{r}}\nonumber\\
			&=& \big((-1)^{\eta\omega}\beta^{2\eta-\omega}\rho\big)^{\frac{(n-1)nq}{2}} \ress{\Ft{n}}{\Ft{r}}\label{resultFnm1}.
 \end{eqnarray}
Since  $\gcd(n,m)=\gcd(n,r)=1$, either $n$ or $r$ is odd. So, $\ress{\Ft{n}}{\Ft{r}}=\ress{\Ft{r}}{\Ft{n}}$. It is easy to verify that $r\in A$, because
$r<n$ and  $n$ is the minimum element of $B$. Set $j=n$, so
$\ress{\Ft{r}}{\Ft{n}}=\left((-1)^{\eta\omega}\beta^{2\eta-\omega}\rho\right)^{(n-1)(r-1)/2}$. This and \eqref{resultFnm1} imply that
 \begin{eqnarray*}
			\ress{\Ft{n}}{\Ft{m}}&=& \left((-1)^{\eta\omega}\beta^{2\eta-\omega}\rho\right)^{\frac{(m-1)(n-r)}{2}} \left((-1)^{\eta\omega}\beta^{2\eta-\omega}\rho\right)^{\frac{(r-1)(m-1)}{2}}\\
			&=& \left((-1)^{\eta\omega}\beta^{2\eta-\omega}\rho\right)^{\frac{(n-1)(m-1)}{2}}.
		\end{eqnarray*}
That is a contradiction. This implies that $A=\mathbb{Z}_{> 0}$.
\end{proof}

\subsection{Some resultants of GFP of Lucas-type}
Recall that a  GFP of  Lucas-type is a polynomial sequence such that $\Lt{0}\in\{1,2\}$, $\Lt{1}=2^{-1}\Lt{0}d$, and  $\Lt{n}=d\Lt{n-1}+g\Lt{n-2}$ for $n>1$.
	
Note that if we take the particular case of the Lucas-type sequence $\Lt{n}$ in which $\Lt{0}=1$ and  $\Lt{1}=2^{-1}d$, then using the initial conditions
we define a new Lucas-type sequence as follows:
Let $\overline{\Lt{0}}=2\Lt{0}$, $\overline{\Lt{1}}=2\Lt{1}=d$ and  $\overline{\Lt{n}}=d\overline{\Lt{n-1}}+g\overline{\Lt{n-2}}$ for  $n>1$.  It is easy to
verify that $\overline{\Lt{n}}=2\Lt{n}$ for $n\ge 0$. Therefore, to find the resultant of a polynomial of Lucas-type $\Lt{n}$, it is enough to find the
resultant for $\Lt{n}$ in which  $\Lt{0}=2$.
	
The following Proposition is actually a corollary of Proposition  \ref{second:main:thm} Part \eqref{gcd:two:lucas}.

\begin{proposition}\label{Cor:2} Let $m, n\in \mathbb{Z}_{>0}$. Then $E_{2}(n)= E_{2}(m)$ if and only if  $\ress{\Lt{m}}{\Lt{n}}= 0$.
\end{proposition}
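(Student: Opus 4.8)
The plan is to obtain this as an immediate consequence of Proposition \ref{second:main:thm} Part \eqref{gcd:two:lucas} combined with the fundamental property of the resultant, Lemma \ref{Properties:res} Part \eqref{Fundamental:propierty:resultant}, which asserts that $\ress{f}{h}=0$ if and only if $f$ and $h$ have a common divisor of positive degree. Thus the entire task reduces to reading off, from the explicit $\gcd$ formula, when $\gcd(\Lt{m},\Lt{n})$ has positive degree. Recall also the reduction made at the start of this subsection: it suffices to treat the case $\Lt{0}=2$.

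First I would set $\delta=\gcd(m,n)$ and apply Proposition \ref{second:main:thm} Part \eqref{gcd:two:lucas}. If $E_{2}(n)=E_{2}(m)$, that proposition gives $\gcd(\Lt{m},\Lt{n})=\Lt{\delta}$. By Lemma \ref{Propierties:FT} Part \eqref{Propierties:LT}, $\deg(\Lt{\delta})=\eta\delta$, and since the standing hypothesis $\deg(d)>\deg(g)$ forces $\eta=\deg(d)\ge 1$ while $\delta\ge 1$, this degree is positive. Hence $\Lt{m}$ and $\Lt{n}$ share a common divisor of positive degree, and Lemma \ref{Properties:res} Part \eqref{Fundamental:propierty:resultant} yields $\ress{\Lt{m}}{\Lt{n}}=0$.

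If instead $E_{2}(n)\ne E_{2}(m)$, the same proposition gives $\gcd(\Lt{m},\Lt{n})=\gcd(\Lt{\delta},\Lt{0})$. Since we may assume $\Lt{0}=2$, a nonzero constant and hence a unit of $\mathbb{Q}[x]$, the polynomial $\gcd(\Lt{\delta},\Lt{0})$ is a nonzero constant, so $\Lt{m}$ and $\Lt{n}$ have no common factor of positive degree. Both $\Lt{m}$ and $\Lt{n}$ are nonzero (their degrees $\eta m$ and $\eta n$ are positive), so Lemma \ref{Properties:res} Part \eqref{Fundamental:propierty:resultant} gives $\ress{\Lt{m}}{\Lt{n}}\ne 0$. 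Combining the two cases establishes the equivalence.

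I do not anticipate a genuine obstacle; the proof is essentially a translation step. The only points deserving a word of care are: (i) the reduction to $\Lt{0}=2$, so that $\gcd(\Lt{\delta},\Lt{0})$ is genuinely a unit — this is exactly where the extra hypotheses \eqref{extra:condition} on $p_{0}$ pay off, and for $\Lt{0}=1$ the conclusion is even more immediate; and (ii) checking $\eta\ge 1$, which is guaranteed by the blanket assumption $\deg(d)>\deg(g)$. With these in place the argument is a two-line case split.
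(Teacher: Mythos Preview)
Your proposal is correct and follows essentially the same route as the paper: both arguments invoke Proposition~\ref{second:main:thm} Part~\eqref{gcd:two:lucas} to identify $\gcd(\Lt{m},\Lt{n})$ in the two cases, and then apply Lemma~\ref{Properties:res} Part~\eqref{Fundamental:propierty:resultant} to translate the degree of that gcd into the (non)vanishing of the resultant. Your version is slightly more explicit in justifying $\deg(\Lt{\delta})=\eta\delta\ge 1$ via Lemma~\ref{Propierties:FT} and in handling $\Lt{0}$ as a unit, but these are refinements of presentation rather than a different approach.
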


\begin{proof}  Suppose that $E_{2}(m)=E_{2}(n)$. Therefore,  Proposition  \ref{second:main:thm} Part \eqref{gcd:two:lucas},  Lemma \ref{Properties:res} Part \eqref{Fundamental:propierty:resultant} and the fact that $\deg(\Lt{\gcd(m,n)})>1$, imply that  if $E_{2}(m)=E_{2}(n)$, then
$\ress{\Lt{m}}{\Lt{n}}=0$.

From Proposition  \ref{second:main:thm} Part \eqref{gcd:two:lucas} we have that if $E_{2}(m)\ne E_{2}(n)$, then $\gcd(\Lt{m},\Lt{n})=1 \text{ or } 2$.  For the other implication
we suppose that $E_{2}(n) \not=E_{2}(m)$ and that $\ress{\Lt{m}}{\Lt{n}}=0$. This and Lemma \ref{Properties:res} Part \eqref{Fundamental:propierty:resultant}
imply that  $\deg(\gcd(\Lt{n},\Lt{m}))\ge 1$. That is a contradiction.
\end{proof}

\begin{lemma}\label{Lemmas:for:Lucas}  If $n\in\mathbb{Z}_{>0}$ and $\Lt{0}=2$, then these hold
	
\begin{enumerate}[(i)]	
	\item \label{resultant:dgL}  $\ress{g}{\Lt{n}}=\rho^n$,
	\item \label{resultant:RdL}
		\[ \ress{\Lt{1}}{\Lt{n}}=
		\begin{cases} 0 & \mbox{if $n$  is odd;}\\
				2^{\eta}\left((-1)^{\eta \omega }\beta^{2\eta-\omega} \rho\right)^{\frac{n}{2}} & \mbox{if $n$ is even. }
		\end{cases}
		\]	
\end{enumerate}
\end{lemma}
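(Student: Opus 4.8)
The plan is to prove both parts by induction on $n$, driven entirely by the recurrence $\Lt{n}=d\Lt{n-1}+g\Lt{n-2}$ together with the ``Euclidean'' rule of Lemma~\ref{Properties:res} Part~\eqref{Rbasic-3}. The single fact that makes all exponents collapse nicely is that $\deg(d)>\deg(g)$ forces the leading behaviour of $\Lt{n}$ to come from the term $d\Lt{n-1}$; concretely, since $\Lt{0}=2$ we have $\alpha=1$, hence $\Lt{1}=d$, and by Lemma~\ref{Propierties:FT} Part~\eqref{Propierties:LT} we get $\deg(\Lt{n})=\eta n$ and $\lc(\Lt{n})=\beta^{n}$. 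I will use these facts throughout.

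For Part~\eqref{resultant:dgL} I would induct on $n$, mimicking the proof of Lemma~\ref{lemma:g:factor} Part~\eqref{resultantdg}. The base case $n=1$ is $\ress{g}{\Lt{1}}=\ress{g}{d}=\rho$, using $\Lt{1}=d$ and the definition of $\rho$. For the step with $n\ge 2$, write $\Lt{n}=g\Lt{n-2}+d\Lt{n-1}$ and apply Lemma~\ref{Properties:res} Part~\eqref{Rbasic-3} with $f=g$: since $\deg(\Lt{n})=\eta n=\deg(d\Lt{n-1})$, the exponent $r-m$ is $0$, so $\ress{g}{\Lt{n}}=\ress{g}{d\Lt{n-1}}$, and then Part~\eqref{Rbasic-2} gives $\ress{g}{d\Lt{n-1}}=\ress{g}{d}\,\ress{g}{\Lt{n-1}}=\rho\cdot\rho^{n-1}=\rho^{n}$ by the inductive hypothesis.

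For Part~\eqref{resultant:RdL} the heart of the argument is the one-step reduction $\ress{\Lt{1}}{\Lt{n}}=\big((-1)^{\eta\omega}\beta^{2\eta-\omega}\rho\big)\,\ress{\Lt{1}}{\Lt{n-2}}$, valid for every $n\ge 2$. To obtain it, use $\Lt{1}=d$ to write $\Lt{n}=\Lt{1}\Lt{n-1}+g\Lt{n-2}$ and apply Lemma~\ref{Properties:res} Part~\eqref{Rbasic-3} with $f=\Lt{1}$ and $h=g\Lt{n-2}$: here $\lc(\Lt{1})=\beta$ and $\deg(\Lt{n})-\deg(g\Lt{n-2})=\eta n-(\omega+\eta(n-2))=2\eta-\omega$, so $\ress{\Lt{1}}{\Lt{n}}=\beta^{2\eta-\omega}\ress{\Lt{1}}{g\Lt{n-2}}$; then Part~\eqref{Rbasic-2} splits off $\ress{\Lt{1}}{g}=\ress{d}{g}=(-1)^{\eta\omega}\ress{g}{d}=(-1)^{\eta\omega}\rho$ (Part~\eqref{Rbasic-1} and the definition of $\rho$). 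Iterating this reduction, for $n$ even it terminates at $\ress{\Lt{1}}{\Lt{0}}=\ress{\Lt{1}}{2}=2^{\deg(\Lt{1})}=2^{\eta}$ (the constant rule recalled just before Lemma~\ref{Properties:res}), giving $\ress{\Lt{1}}{\Lt{n}}=2^{\eta}\big((-1)^{\eta\omega}\beta^{2\eta-\omega}\rho\big)^{n/2}$; for $n$ odd it terminates at $\ress{\Lt{1}}{\Lt{1}}=0$, which holds because $\Lt{1}$ is a common divisor of $\Lt{1}$ and $\Lt{1}$ of degree $\eta\ge 1$ (Lemma~\ref{Properties:res} Part~\eqref{Fundamental:propierty:resultant}), so the whole iterated product vanishes. (The vanishing for $n$ odd is also immediate from Proposition~\ref{Cor:2} with $m=1$, since $E_{2}(1)=0=E_{2}(n)$.)

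I do not anticipate a genuine obstacle; this is degree bookkeeping. The only points that need care are: (a) deducing $\Lt{1}=d$ and $\lc(\Lt{1})=\beta$ from $\Lt{0}=2\Rightarrow\alpha=1$; (b) the exponents arising in Lemma~\ref{Properties:res} Part~\eqref{Rbasic-3}, which come out to exactly $0$ in Part~\eqref{resultant:dgL} and exactly $2\eta-\omega$ in Part~\eqref{resultant:RdL}, precisely because $\deg(d)>\deg(g)$; and (c) handling the terminal value $\ress{\Lt{1}}{\Lt{1}}$ through the common-divisor criterion of Lemma~\ref{Properties:res} Part~\eqref{Fundamental:propierty:resultant} rather than the Euclidean rule, which does not apply to a self-resultant.
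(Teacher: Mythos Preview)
Your proposal is correct and follows essentially the same inductive approach as the paper: both parts are proved by reducing $\ress{g}{\Lt{n}}$ and $\ress{\Lt{1}}{\Lt{n}}$ along the recurrence $\Lt{n}=d\Lt{n-1}+g\Lt{n-2}$ via Lemma~\ref{Properties:res} Parts~\eqref{Rbasic-2} and~\eqref{Rbasic-3}, with the odd-case vanishing handled by Proposition~\ref{Cor:2} (or equivalently the self-resultant). The only cosmetic difference is that for the even case you iterate down to $\ress{\Lt{1}}{\Lt{0}}=2^{\eta}$, whereas the paper establishes the base at $\ress{\Lt{1}}{\Lt{2}}$ directly; the computations are identical.
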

	
	\begin{proof}
	We prove Part \eqref{resultant:dgL} by  mathematical induction on $n$. Since $\ress{g}{\Lt{1}}=\ress{g}{d}=\rho$, it holds that the result
	is true for  $n=1$. Suppose that for some integer $n=k>1$,  $\ress{g}{\Lt{k}}=\rho^k$ holds.
	From \eqref{Fibonacci;general:LT} and Lemma \ref{Properties:res} Parts  \eqref{Rbasic-2} and \eqref{Rbasic-3} we have
		$$
			\ress{g}{\Lt{k+1}}=\ress{g}{d\Lt{k}+g\Lt{k-1}} = \alpha^{\eta (k+1)-\eta (k+1)}\ress{g}{d}\ress{g}{\Lt{k}}.
		$$
	This and the inductive hypothesis imply that
		$$ \ress{g}{\Lt{k+1}}=\ress{g}{d}\ress{g}{\Lt{k}}=\ress{g}{d}^{k+1},$$
	which is our claim.	
	
We prove Part \eqref{resultant:RdL} by induction on $n$.  Let $Q(n)$ be the statement:
		\[ \ress{\Lt{1}}{\Lt{n}}=
		\begin{cases} 0 & \mbox{if $n$ is odd;}\\
				2^{\eta}\left((-1)^{\eta \omega }\beta^{2\eta-\omega} \rho\right)^{\frac{n}{2}} & \mbox{if  $n$ is even. }
		\end{cases}
		\]
Since $\ress{\Lt{1}}{\Lt{1}}=0$, $Q(1)$ holds. Note that $\Lt{1}=(p_{0}/2)d=d$. This and Lemma \ref{Properties:res} Parts \eqref{Rbasic-2} and \eqref{Rbasic-3} imply that
\[
			\ress{\Lt{1}}{\Lt{2}}=\ress{\Lt{1}}{d\Lt{1}+g\Lt{0}}=\beta^{2\eta-\omega}\ress{d}{2g}=2^{\eta} (-1)^{\eta\omega}\beta^{2\eta-\omega}\rho.
\]
This proves $Q(2)$.

Suppose that $Q(k-2)$ and $Q(k-1)$ is true and we prove $Q(k)$. Note that if $k$ is odd by Proposition \ref{Cor:2} we have that $\ress{\Lt{1}}{\Lt{n}}=0$. We suppose that $k$ is even. Lemma \ref{Properties:res}  Parts \eqref{Rbasic-2} and \eqref{Rbasic-3}, $\Lt{1}=d$, and Lemma \ref{Propierties:FT} Part \eqref{Propierties:LT}, and \eqref{bineformulados} imply
	\begin{eqnarray*}
		\ress{\Lt{1}}{\Lt{k}}&=&\ress{d}{d\Lt{k-1}+g\Lt{k-2}}\\
		&=&\beta^{2\eta-\omega}\ress{d}{g\Lt{k-2}}\\
		&=&\beta^{2\eta-\omega}\ress{d}{g}\ress{d}{\Lt{k-2}}\\
		&=&\left( (-1)^{\eta\omega}\beta^{2\eta-\omega}\rho\right) \ress{\Lt{1}}{\Lt{k-2}}.
	\end{eqnarray*}
Note that $k-2$ and $k$ have the same parity. This and $Q(k-2)$ imply that
	\[
		\ress{\Lt{1}}{\Lt{k}}=\left( (-1)^{\eta\omega}\beta^{2\eta-\omega}\rho\right)2^{\eta}\left((-1)^{\eta \omega }\beta^{2\eta-\omega}\rho\right)^{\frac{k-2}{2}} =2^{\eta}\left((-1)^{\eta \omega }\beta^{2\eta-\omega} \rho\right)^{\frac{k}{2}}.
\]
This proves $Q(k)$.
\end{proof}
\subsection{Proof of Theorem \ref{main:2:lucas}}\label{Proof:Resultants:Main:LT}
We now prove the resultant of two Lucas-type polynomials.
	
\begin{proof}[Proof of Theorem \ref{main:2:lucas}] We consider two cases: $\Lt{0}=2$ and $\Lt{0}=1$. We first prove the case $\Lt{0}=2$.
Let $A=\{i \in \mathbb{Z}_{>0}\mid \forall j \in \mathbb{Z}_{>0},  \text{Property \eqref{resultLnm1} holds for } \ress{\Lt{i}}{\Lt{j}}  \}$.
\begin{equation}\label{resultLnm1}
			 \ress{\Lt{i}}{\Lt{j}}=
			 	\begin{cases} 
				0 &  \mbox{if }E_{2}(i)=E_{2}(j);\\
				2^{\eta \gcd(i,j)}\left((-1)^{\eta\omega}\beta^{2\eta-\omega}\rho\right)^{ij/2}  &\mbox{if }E_{2}(i)\not=E_{2}(j).
		\end{cases}
\end{equation}	
From Lemma \ref{Lemmas:for:Lucas} Part \eqref{resultant:RdL} we have that $i=1 \in A$.  So, $A\ne \emptyset$.
The following claim completes the proof part $\Lt{0}=2$.

\noindent {\bf Claim.}  $A=\mathbb{Z}_{> 0}$.

\noindent \emph{Proof of Claim.} Suppose $B:=\mathbb{Z}_{> 0} \setminus A$ is a non-empty set.  Let $n\ne 1$ be the least element of $B$.
So, there is $h\in \mathbb{Z}_{> 0}$ such that $\ress{\Lt{n}}{\Lt{h}}$ does not satisfy Property \eqref{resultLnm1}. Let $m$ be the least element of the
non-empty set   $H=\{h \in \mathbb{Z}_{>0}\mid \ress{\Lt{n}}{\Lt{h}} \text{ does not satisfy  Property } \eqref{resultLnm1}\}$.  Note that
if  $E_{2}(n)=E_{2}(m)$, then $\ress{\Lt{n}}{\Lt{m}}=0$ (by Proposition  \ref{Cor:2}). That is a contradiction by the definition of $H$. Therefore,
we have that $E_{2}(n)\not=E_{2}(m)$. So,  $n\not=m$ where at least one of them is even. This implies that
 $\ress{\Lt{m}}{\Lt{n}}=(-1)^{\eta^2mn}\ress{\Lt{n}}{\Lt{m}}=\ress{\Lt{n}}{\Lt{m}}$.  Therefore, $\ress{\Lt{m}}{\Lt{n}}$ does not satisfy $\eqref{resultLnm1}$.
 So, $m \notin A$. Since $n\ne m$ is the least element of $B$, we have $m>n$. From the Euclidean algorithms we know that there are
 $q, r \in  \mathbb{Z}_{\ge 0}$  such that $m=nq+r$ with $0\le r<n$.

 We now proceed by cases over $q$.

\textbf{Case $q$ odd} Suppose that $q=2t-1$. Note that $t= \lceil q/2\rceil$ and that $(m-n+r)/2=(t-1)n+r$.  Since  $E_{2}(n)\ne E_{2}(m)$,  $r\ne0$ and $n(n-r)$ is even. This, Proposition \ref{Dic2} for odd case, and Lemma \ref{Properties:res} Part  \eqref{Rbasic-2} and Part \eqref{Rbasic-3} imply that $\ress{\Lt{n}}{\Lt{m}}$ equals
\begin{eqnarray*}
\ress{\Lt{n}}{\Lt{nq+r}}&=&\ress{\Lt{n}}{\Lt{n}T+(-1)^{t(n+1)+r-n}g^{(t-1)n+r}\Lt{n-r}}\\
 				&=&\ress{\Lt{n}}{\Lt{n}T+(-1)^{t(n+1)+r-n}g^{\frac{m-n+r}{2}}\Lt{n-r}}\\
 				&=&(\beta^n)^{\eta m-\frac{\omega(m-n+r)}{2}-\eta(n-r) }\ress{\Lt{n}}{(-1)^{t(n+1)+r-n}g^{\frac{m-n+r}{2}}\Lt{n-r}}\\
 				&=&\beta^{\frac{n(m-n+r)(2\eta-\omega)}{2}}\ress{\Lt{n}}{(-1)^{t(n+1)+r-n}g^{\frac{m-n+r}{2}}\Lt{n-r}}.
\end{eqnarray*}
Note that $\ress{\Lt{n}}{(-1)^{t(n+1)+r-n}}=(-1)^{\eta n(t(n+1)+r-n)}=(-1)^{\eta n(r-n)}=1$.
This, Lemma \ref{Properties:res} Parts \eqref{Rbasic-2add}  and \eqref{Rbasic-3} and Lemma \ref{Lemmas:for:Lucas}  Part \eqref{resultant:dgL} imply
 	\begin{eqnarray*}
 \ress{\Lt{n}}{\Lt{m}}&=&\beta^{\frac{n(m-n+r)(2\eta-\omega)}{2}}\ress{\Lt{n}}{g^{\frac{m-n+r}{2}}\Lt{n-r}}\\
 			      &=&\beta^{\frac{n(m-n+r)(2\eta-\omega)}{2}}\ress{\Lt{n}}{g}^{\frac{n-m+r}{2}}\ress{\Lt{n}}{\Lt{n-r}}\\
 			     &=&\beta^{\frac{n(m-n+r)(2\eta-\omega)}{2}}((-1)^{\eta n\omega}\rho^n)^{\frac{m-n+r}{2}}\ress{\Lt{n}}{\Lt{n-r}}.
 \end{eqnarray*}
Thus,
\begin{equation}\label{Ecuacion:auxiliar}
\ress{\Lt{n}}{\Lt{m}} =\left((-1)^{\eta \omega}\beta^{2\eta-\omega}\rho\right)^{\frac{(m-n+r)}{2}n}\ress{\Lt{n-r}}{\Lt{n}}.
\end{equation}
Since $n(n-r)$ is even, we have that 	$\ress{\Lt{n-r}}{\Lt{n}}=\ress{\Lt{n}}{\Lt{n-r}}$. This and $m>n-r$ imply that  $\ress{\Lt{n}}{\Lt{n-r}}$ satisfies \eqref{resultLnm1}.
From this and \eqref{Ecuacion:auxiliar} we have	
 \begin{eqnarray*}
 	\ress{\Lt{n}}{\Lt{m}}&=&\left((-1)^{\eta \omega}\beta^{2\eta-\omega}\rho\right)^{\frac{(m-n+r)}{2}n}2^{\eta \gcd(n-r,n)}\left((-1)^{\eta\omega}\beta^{2\eta-\omega}\rho\right)^{\frac{n(n-r)}{2}}\\
 	&=&2^{\eta \gcd(n,m)}\left((-1)^{\eta\omega}\beta^{2\eta-\omega}\rho\right)^{\frac{nm}{2}}.
 \end{eqnarray*}
 Thus, $\ress{\Lt{n}}{\Lt{m}}$ satisfies \eqref{resultLnm1}. That is a contradiction.

\textbf{Case $q$ is even} Let  $q=2t$. Note that $t= \lceil q/2\rceil$. Using Proposition \ref{Dic2} Part for the even case,
Lemma \ref{Properties:res} Parts  \eqref{Rbasic-2} and \eqref{Rbasic-3} and following a similarly procedure as in the
proof of the case $q=2t+1$ we obtain 
$\ress{\Lt{n}}{\Lt{m}}=\beta^{(2\eta-\omega)(m-r)n/2}\ress{\Lt{n}}{(-1)^{(n+1)t}}\ress{\Lt{n}}{g^{nt}\Lt{r}}$.
This, the fact that  $\ress{\Lt{n}}{(-1)^{(n+1)t}}=1$ and following a similar procedure as in the proof of the case $q=2t+1$ we obtain that
$\ress{\Lt{n}}{\Lt{m}}=((-1)^{\eta\omega}\beta^{2\eta-\omega}\rho)^{(m-r)n/2}\ress{\Lt{r}}{\Lt{n}}$.
Since $r<n$, we have $r\notin B$. Therefore, $r\in A$. This implies that
\begin{eqnarray*}
	\ress{\Lt{n}}{\Lt{m}}&=& ((-1)^{\eta\omega}\beta^{2\eta-\omega}\rho)^{\frac{(m-r)n}{2}}\ress{\Lt{r}}{\Lt{n}}\\
    &=&((-1)^{\eta\omega}\beta^{2\eta-\omega}\rho)^{\frac{(m-r)n}{2}}\left((-1)^{\eta\omega}2^{\eta \gcd(r,n)}\beta^{2\eta-\omega}\rho\right)^{\frac{nr}{2}}\\
	&=&2^{\eta \gcd(n,m)}\left((-1)^{\eta\omega}\beta^{2\eta-\omega}\rho\right)^{\frac{mn}{2}}.
\end{eqnarray*}
Thus, $\ress{\Lt{n}}{\Lt{m}}$ satisfies \eqref{resultLnm1}. That is a contradiction. This completes the proof that $A=\mathbb{Z}_{> 0}$.

We now prove the case $\Lt{0}=1$. It is easy to see that $\overline{\Lt{n}}=2\Lt{n}$ is a GFP sequence of  Lucas-type where
	$\overline{\Lt{0}}=2$. This and the previous case imply
	\[ \ress{\overline{\Lt{m}}}{\overline{\Lt{n}}}= 
		\begin{cases} 0 & \mbox{if }E_{2}(n)=E_{2}(m);\\
		2^{\eta \gcd(m,n)}\left((-1)^{\eta\omega}\beta^{2\eta-\omega}\rho\right)^{\frac{nm}{2}} & \mbox{if }E_{2}(n)\not=E_{2}(m).
		\end{cases}
	\]
		
		Since $\ress{\overline{\Lt{m}}}{\overline{\Lt{n}}}=\ress{2\Lt{m}}{2\Lt{n}}$, we have
		\[
\ress{\overline{\Lt{m}}}{\overline{\Lt{n}}} =\ress{2}{\Lt{n}}\ress{\Lt{m}}{2}\ress{\Lt{m}}{\Lt{n}}
							     =2^{(n+m)\eta}\ress{\Lt{m}}{\Lt{n}}.
		\]
		Therefore,
		$
		\ress{\Lt{m}}{\Lt{n}}=2^{-(n+m)\eta}\ress{\overline{\Lt{m}}}{\overline{\Lt{n}}}
		$, completing the proof.
	\end{proof}	
	
\subsection{Proof of Theorem \ref{Main:3:thm}}\label{Proof:Resultants:LFT}
We prove the third main result. The proof of Proposition \ref{cor3} is similar to the proof of 
Proposition \ref{Cor:2} (this uses  Proposition \ref{second:main:thm} Part \eqref{gcd:LnFm} instead of 
Part \eqref{gcd:two:lucas}) so it is omitted.

\begin{proposition}\label{cor3}
Let $m, n\in\mathbb{Z}_{> 0}$.
$E_2(n) < E_{2}(m)$  if and only if   $\ress{\Lt{n}}{\Ft{m}}= 0$.
\end{proposition}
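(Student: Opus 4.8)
The plan is to imitate the proof of Proposition \ref{Cor:2} line for line, replacing the appeal to Proposition \ref{second:main:thm} Part \eqref{gcd:two:lucas} by an appeal to Part \eqref{gcd:LnFm}, and using Lemma \ref{Properties:res} Part \eqref{Fundamental:propierty:resultant} to pass between ``having a common divisor of positive degree'' and ``having vanishing resultant''.

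First I would prove that $E_2(n) < E_2(m)$ implies $\ress{\Lt{n}}{\Ft{m}} = 0$. Under this hypothesis $E_2(m) > E_2(n)$, so Proposition \ref{second:main:thm} Part \eqref{gcd:LnFm} gives $\gcd(\Lt{n},\Ft{m}) = \Lt{\delta}$ with $\delta = \gcd(m,n)$. By Lemma \ref{Propierties:FT} Part \eqref{Propierties:LT}, $\deg(\Lt{\delta}) = \eta\delta$; since $\eta = \deg(d) \ge 1$ (because $\deg(d) > \deg(g) \ge 0$) and $\delta \ge 1$, this degree is positive. Hence $\Lt{n}$ and $\Ft{m}$ share a divisor of positive degree, and Lemma \ref{Properties:res} Part \eqref{Fundamental:propierty:resultant} forces $\ress{\Lt{n}}{\Ft{m}} = 0$.

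For the converse I would argue by contraposition. Suppose $E_2(n) \ge E_2(m)$. Then Proposition \ref{second:main:thm} Part \eqref{gcd:LnFm} yields $\gcd(\Lt{n},\Ft{m}) = 1$, so $\Lt{n}$ and $\Ft{m}$ have no common divisor of positive degree, and Lemma \ref{Properties:res} Part \eqref{Fundamental:propierty:resultant} gives $\ress{\Lt{n}}{\Ft{m}} \neq 0$. Combining the two implications proves the stated equivalence.

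The argument is essentially mechanical, so I do not anticipate a genuine obstacle. The only point requiring care is verifying that the common factor $\Lt{\delta}$ in the first case is genuinely non-constant; this rests on the standing hypothesis $\deg(d) > \deg(g)$, which gives $\eta \ge 1$, together with the convention $\deg(\Lt{1}) \ge 1$ recorded in \eqref{extra:condition} --- exactly the ingredients already used in Proposition \ref{Cor:2}.
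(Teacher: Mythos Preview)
Your proposal is correct and matches the paper's approach exactly: the paper explicitly omits the proof, noting that it is similar to the proof of Proposition~\ref{Cor:2} with Proposition~\ref{second:main:thm} Part~\eqref{gcd:LnFm} in place of Part~\eqref{gcd:two:lucas}. Your write-up carries this out in detail, including the check that $\deg(\Lt{\delta})\ge 1$.
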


\begin{proof}[Proof of Theorem \ref{Main:3:thm}]   We consider two cases: $\alpha=1$ and $\alpha=2$. We prove the case $\alpha=1$,
the case $\alpha = 2$ is similar and it is omitted.
Let $$A=\{i \in \mathbb{Z}_{>0}\mid \forall j \in \mathbb{Z}_{>0}, \text{ Property \eqref{resultLnFnm1} holds for } \ress{\Lt{j}}{\Ft{i}} \}.$$
\begin{equation}\label{resultLnFnm1}
			 \ress{\Lt{j}}{\Ft{i}}=
			 	\begin{cases} 0 &  \mbox{ if }E_{2}(j)<E_{2}(i);\\
				2^{\eta \gcd(i,j)-\eta}\left((-1)^{\eta\omega}\beta^{2\eta-\omega}\rho\right)^{(j(i-1))/2}  &\mbox{if }E_{2}(j) \ge E_{2}(i).
		\end{cases}
\end{equation}	
Since $\ress{\Lt{j}}{1}=1$, we have $i=1 \in A$.  So, $A\ne \emptyset$. The following claim completes the proof.

\noindent {\bf Claim.}  $A=\mathbb{Z}_{> 0}$.

\noindent \emph{Proof of Claim.} Suppose $B:=\mathbb{Z}_{> 0} \setminus A$ is a non-empty set.  Let $m \ne 1$ be the least element of $B$.
So, there is $h\in \mathbb{Z}_{> 0}$ such that $\ress{\Lt{h}}{\Ft{m}}$ does not satisfy \eqref{resultLnFnm1}. Let $n$ be the least element of the
non-empty set   $H=\{h \in \mathbb{Z}_{>0}\mid \ress{\Lt{h}}{\Ft{m}} \text{ does not satisfy  } \eqref{resultLnFnm1}\}$.

If  $E_{2}(n) < E_{2}(m)$, then by  Proposition \ref{cor3} it holds $\ress{\Lt{n}}{\Ft{m}}= 0$. This and \eqref{resultLnFnm1} imply that $m\in A$.
That is a contradiction. Let us suppose that  $E_{2}(n) \ge E_{2}(m)$.

We now analyze cases on $m$.

{\bf Case $m=n$.} Note that $\ress{\Lt{n}}{\Ft{n}}=\ress{\Ft{n}}{\Lt{n}}$. From Proposition \ref{Fibonacci:Lucas:Identities} Part \eqref{Fibonacci:Identities1}
with $r=q=\alpha=1$  (if $\alpha\ne 1$ is similar) we have $\Lt{n}= \Ft{n+1}+g\Ft{n-1}=d\Ft{n}+2g\Ft{n-1}$.  Therefore, $\ress{\Ft{n}}{\Lt{n}}$ equals
\begin{eqnarray*}
	\ress{\Ft{n}}{d\Ft{n}+2g\Ft{n-1}} &=&(\beta^{n-1})^{\eta n-(\omega+\eta(n-2))} \ress{\Ft{n}}{2g\Ft{n-1}}\\
	&=&\beta^{(n-1)(2-\omega)}2^{\eta(n-1)} \ress{\Ft{n}}{g\Ft{n-1}}.
\end{eqnarray*}	
By Lemma \ref{Properties:res} Part \eqref{gfactor:FIbo:Fibo} and Lemma \ref{lemma:Resultant:FibonaciT} Part \eqref{resnn-1}, we have
    \begin{eqnarray*}
	\ress{\Ft{n}}{d\Ft{n}+2g\Ft{n-1}}&=&2^{\eta(n-1)}(-1)^{\omega\eta (n-1)}(\beta^{(n-1)(2\eta-\omega)})\rho^{n-1} \big((-1)^{\omega\eta}\beta^{2\eta-\omega}\rho\big)^{\frac{(n-2)(n-1)}{2}}.\\
	&=&2^{\eta(n-1)} \big((-1)^{\omega\eta}\beta^{2\eta-\omega}\rho\big)^{\frac{n(n-1)}{2}}.
\end{eqnarray*}
So, $\ress{\Lt{n}}{\Ft{n}}$ satisfies  Property \eqref{resultLnFnm1}. That is contradiction. Therefore $m\ne n$.

{\bf Case $m>n$.}  From the Euclidean algorithm we know that $m=nq+r$ for $ 0\le r<n$. We consider two sub-cases on $q$.

{\bf Sub-case $q=1$.} Note that $ 0< r<n$. So, $m=n+r$ and $\ress{\Lt{n}}{\Ft{m}}=\ress{\Lt{n}}{\Ft{n+r}}$. This and Proposition \ref{Fibonacci:Lucas:Identities} Part \eqref{Fibonacci:Identities1} imply that $\ress{\Lt{n}}{\Ft{m}}= \ress{\Lt{n}}{\alpha \Lt{n}\Ft{r}+(-g)^r \Ft{n-r}}$. Therefore,
\begin{eqnarray*}
\ress{\Lt{n}}{\Ft{m}}
                    &=&\beta^{n \eta (n+r-1)-(\omega r+\eta(n-r-1)}\ress{\Lt{n}}{(-g)^{r}\Ft{n-r}}\\
                    &=& \beta^{n(2\eta-\omega)r}\ress{\Lt{n}}{(-g)^{r}\Ft{n-r}}\\
                    &=&\beta^{n(2\eta-\omega)r}\ress{\Lt{n}}{(-g)^{r}}\ress{\Lt{n}}{\Ft{n-r}}.
 \end{eqnarray*}                  
This and Lemma \ref{Properties:res} Part \eqref{gfactor:Lucas:Lucas} imply                
\begin{eqnarray*}
\ress{\Lt{n}}{\Ft{m}}                                     
                    &=&\beta^{n(2\eta-\omega)r}\left(\ress{\Lt{n}}{-g}\right)^r\ress{\Lt{n}}{\Ft{n-r}}\\
                    &=&\beta^{n(2\eta-\omega)r}\left(\ress{\Lt{n}}{-1}\right)^r\left(\ress{\Lt{n}}{g}\right)^r\ress{\Lt{n}}{\Ft{n-r}}\\
                    &=&\beta^{n(2\eta-\omega)r}(-1)^{r\eta n}(-1)^{\eta n\omega r}\ress{g}{\Lt{n}}^{r}\ress{\Lt{n}}{\Ft{n-r}}.
\end{eqnarray*}
Therefore,
 \begin{equation} \label{Aux:Res:Ln:Fn-r}
 \ress{\Lt{n}}{\Ft{m}}=(-1)^{\eta nr(\omega+1)}\beta^{n(2\eta-\omega)r}  \rho^{nr}\ress{\Lt{n}}{\Ft{n-r}}.
 \end{equation}
 Since  $m>n-r$ is the least element of $B$, we have $n-r\in A$. Therefore it holds
 $\ress{\Lt{n}}{\Ft{n-r}}=2^{\eta(\gcd(n,n-r)-1)}\left((-1)^{\eta\omega}\beta^{2\eta-w}\rho\right)^{n(n-r-1)/2}$. This and \eqref{Aux:Res:Ln:Fn-r}
 (after simplifications) imply that
 $$\ress{\Lt{n}}{\Ft{m}}=2^{\eta(\gcd(n,n-r)-1)}\left[\beta^{2\eta-\omega}(-1)^{\eta\omega}\rho\right]^{\frac{n(n+r-1)}{2}}.$$
Therefore, $\ress{\Lt{n}}{\Ft{m}}$ satisfies Property  \eqref{resultLnFnm1}. That is a contradiction. Thus, $m\ne n+r$.

{\bf Sub-case $q>1$.}  Since $\ress{\Lt{n}}{\Ft{m}}=\ress{\Lt{n}}{\Ft{nq+r}}$, by Proposition \ref{Fibonacci:Lucas:Identities}
Part \eqref{Fibonacci:Identities1} we have $\ress{\Lt{n}}{\Ft{m}}= \ress{\Lt{n}}{\alpha \Lt{n}\Ft{n(q-1)+r}-(-g)^n \Ft{n(q-2)+r}}$. 
This and   the fact that ${n\eta (nq+r-1)-(\omega n+\eta(n(q-2)+r-1))}= (2\eta-\omega)n^2$, imply that  
 $$\ress{\Lt{n}}{\Ft{m}} = \beta^{n(2\eta-\omega)n}\ress{\Lt{n}}{-(-g)^{n}\Ft{n(q-2)+r}}.$$
So,
\begin{eqnarray*}
\ress{\Lt{n}}{\Ft{m}} &=&\beta^{(2\eta-\omega)n^2}\ress{\Lt{n}}{-1}\ress{\Lt{n}}{(-g)^{n}}\ress{\Lt{n}}{\Ft{n(q-2)+r}}\\
                    	      &=&\beta^{(2\eta-\omega)n^2}(-1)^{\eta n}\left(\ress{\Lt{n}}{-g}\right)^n\ress{\Lt{n}}{\Ft{n(q-2)+r}}\\
                    	      &=&\beta^{(2\eta-\omega)n^2}(-1)^{\eta n}(-1)^{\eta n^2}\left(\ress{\Lt{n}}{g}\right)^n\ress{\Lt{n}}{\Ft{n(q-2)+r}}\\
	                       &=&\beta^{(2\eta-\omega)n^2}(-1)^{\eta n(n+1)}\left(\ress{\Lt{n}}{g}\right)^n\ress{\Lt{n}}{\Ft{n(q-2)+r}} \\
	                       &=&\beta^{(2\eta-\omega)n^2}\left(\ress{\Lt{n}}{g}\right)^n\ress{\Lt{n}}{\Ft{n(q-2)+r}}.
\end{eqnarray*}
Since $m$ is the least element of $B$ and $n(q-2)+r<m$, we have that $n(q-2)+r\in A$. This and  $E_{2}(n) \ge E_{2}(m) = E_{2}(n(q-2)+r)$ imply that
$$\ress{\Lt{n}}{\Ft{n(q-2)+r}} =2^{\eta(\gcd(n,n(q-2)+r)-1)}\left[(-1)^{\eta\omega}\beta^{2\eta-\omega}\rho \right]^\frac{n(n(q-2)+r-1)}{2}.$$
Note that $\gcd(n,n(q-2)+r)=\gcd(n,nq+r)$. Therefore,
\begin{eqnarray*}
\ress{\Lt{n}}{\Ft{m}}&=&\beta^{(2\eta-\omega)n^2}(-1)^{\eta n\omega}\rho^{n^2}2^{\eta(\gcd(n,n(q-2)+r)-1)}\left[(-1)^{\eta\omega}\beta^{2\eta-\omega}\rho \right]^\frac{n(n(q-2)+r-1)}{2}\\
                    &=&\beta^{\frac{(2\eta-\omega)n(nq+r-1)}{2}}(-1)^{\frac{\eta\omega n(nq+r-1)}{2}}\rho^{\frac{n(nq+r-1)}{2}}2^{\eta(\gcd(n,nq+r)-1)}\\
                    &=&2^{\eta(\gcd(n,nq+r)-1)}\left[(-1)^{\eta\omega}\beta^{2\eta-\omega}\rho\right]^\frac{n(nq+r-1)}{2}.
\end{eqnarray*}
From this we conclude that $ \ress{\Lt{n}}{\Ft{m}}=2^{\eta(\gcd(n,m)-1)}\left[\beta^{2\eta-\omega}(-1)^{\eta\omega}\rho\right]^{\frac{n(m-1)}{2}}$. Therefore,
$m\in A$.  That is is a contradiction. 

{\bf Case $m<n$.} From the Euclidean algorithm we know that $n=mq+r$ for $ 0\le r<m$. We consider two sub-cases on $q$.

{\bf Sub-case $q=1$.} The case $q> 1$ is similar and it is omitted.  In this case $r\ne 0$, for $r=0$ see the case $m=n$. 
Therefore, $\ress{\Lt{n}}{\Ft{m}}=\ress{\Lt{m+r}}{\Ft{m}}$. This, Proposition
\ref{Fibonacci:Lucas:Identities}  Part \eqref{Lucas:Identities2} and Lemma \ref {Properties:res} Parts \eqref{Rbasic-1} imply that
\begin{eqnarray*}
\ress{\Lt{n}}{\Ft{m}}&=&\ress{\left((a-b)^{2}/{\alpha}\right)\Ft{m}\Ft{r}+(-g)^{r}\Lt{m-r}}{\Ft{m}}\\
			     & =&(-1)^{\eta^2(m+r)(m-1)}\ress{\Ft{m}}{\frac{(a-b)^2}{\alpha}\Ft{m}\Ft{r}+(-g)^r\Lt{m-r}}.
\end{eqnarray*}

Note that $(m\pm r)(m-1)$ and $r(m-1)$ are even (it is clear if $m$ is odd), if $m$ is even, then $1\le E_2(m)\le  E_2(n=m+r)$.
So, both  $n$ and $r$ are even. Therefore, $(-1)^{\eta^2(m+r)(m-1)}=(-1)^{\eta(m-1)r}=(-1)^{\eta(m-1)\omega r}=1$.

From Lemma \ref {Properties:res} Parts \eqref{Rbasic-2}, \eqref{Rbasic-2add}, and  \eqref{Rbasic-3} we have
\begin{eqnarray*}
\ress{\Lt{n}}{\Ft{m}}
&=&(\beta^{m-1})^{\eta(m+r)-(\eta(m-r)+\omega r)}\ress{\Ft{m}}{(-g)^r\Lt{m-r}}\\
&=&(\beta^{m-1})^{(2\eta -\omega)r}\ress{\Ft{m}}{(-g)^r}\ress{\Ft{m}}{\Lt{m-r}}\\
&=&(\beta^{m-1})^{(2\eta -\omega)r}\ress{\Ft{m}}{(-g)^r}\ress{\Ft{m}}{\Lt{m-r}}\\
&=&(\beta^{m-1})^{(2\eta -\omega)r}\ress{\Ft{m}}{(-1)^r}\ress{\Ft{m}}{g^r}\ress{\Ft{m}}{\Lt{m-r}}\\
&=&(\beta^{m-1})^{(2\eta -\omega)r}(-1)^{\eta(m-1)r}(-1)^{\eta(m-1)\omega r}\ress{g}{\Ft{m}}^{r}\ress{\Ft{m}}{\Lt{m-r}}\\
&=&(\beta^{m-1})^{(2\eta -\omega)r} \rho^{r(m-1)}(-1)^{\eta^2(m-1)(m-r)}\ress{\Lt{m-r}}{\Ft{m}}\\
&=&(-1)^{\eta(m-1)(1+\omega)r} (\beta^{m-1})^{(2\eta -\omega)r}\rho^{r(m-1)}\ress{\Lt{m-r}}{\Ft{m}}.
\end{eqnarray*}
Since $n=m+r$ is the least element of $H$, we have that $(m-r)\not\in H$.
Therefore, $\ress{\Lt{m-r}}{\Ft{m}}$ satisfies  \eqref{resultLnFnm1}. So, 
$$\ress{\Lt{n}}{\Ft{m}}= \beta^{(m-1)(2\eta -\omega)r}\rho^{r(m-1)} 2^{\eta(\gcd(m-r,m)-1)}\left(\beta^{2\eta-\omega}\rho\right)^{(m-r)(m-1)/2}.$$ Since $0<r<m$, the
$\gcd(m-r,m)=1$. This (after some simplifications) implies that
$\ress{\Lt{n}}{\Ft{m}}=\left((-1)^{\eta\omega}\beta^{2\eta-\omega}\rho\right)^{(m-1)(m+r)/2}$. Therefore,
$m\in A$.  That is a contradiction. This completes the proof of the claim.
\end{proof}

\section{Derivatives of GFP}

In this section we  give closed formulas for the derivatives of GFPs. The derivatives of a Lucas-type polynomials is given in term of its equivalent polynomial 
and the derivative of a Fibonacci-type polynomial is given in terms of Fibonacci-type and its equivalent. 
The derivative of the familiar polynomials studied here are in Table \ref{Deriv:Gen:Falcon:Plaza}.

Theorem \ref{Gen:Falcon:Plaza} is a generalization of the derivative given by several authors 
\cite{Richard, Falcon, Horadam1, Horadam2,Horadam3,wang}  for some Fibonacci-type polynomials and some Lucas-type polynomials.
Recall that from \eqref{bineformulauno} and \eqref{bineformulados} we have $d=a+b$, $b= -g/a$ where $d$ and $g$ are the polynomials defined in
\eqref{Fibonacci;general:FT} and \eqref{Fibonacci;general:LT}. This implies that $a-b= a+ga^{-1}$. Here we use
$\Ft{n}^{\prime}$, $\Lt{n}^{\prime}$, $a^{\prime}$, $b^{\prime}$ and $d^{\prime}$ to mean the derivatives of
$\Ft{n}$, $\Lt{n}$, $a$, $b$ and $d$ with respect to $x$.

Evaluating the derivative of Fibonacci polynomials and the derivative of Lucas polynomials at $x=1$ and $x=2$ we obtain numerical sequences that appear in Sloan \cite{sloane}. Thus, 
\begin{eqnarray*}
\frac{d(F_{n})}{dx}\Bigr|_{\substack{x=1}} =\seqnum{A001629};\quad&& \quad  \frac{d(F_{n})}{dx}\Bigr|_{\substack{x=2}}=\seqnum{A006645};\\
\frac{d(D_{n})}{dx}\Bigr|_{\substack{x=1}} = \seqnum{A045925}; \quad &&\quad  \frac{d(D_{n})}{dx}\Bigr|_{\substack{x=2}} = \seqnum{A093967}.
\end{eqnarray*}

For the sequences generated by the derivatives of the other familiar polynomials studied here see:  \seqnum{A001871}, \seqnum{A317403}, \seqnum{A317404}, \seqnum{A317405}, \seqnum{A317408},  \seqnum{A317449}, \seqnum{A317450}, and \seqnum{A317451}.

\begin{theorem}\label{Gen:Falcon:Plaza} If $g$ is a constant, then
\begin{enumerate}[(i)]
  \item \label{Deriv:Fibo} $$\Ft{n}^{\prime}=\frac{d^{\prime} \left(n g \Ft{n-1}-d\cdot \Ft{n}+n \Ft{n+1}\right)}{(a-b)^2}=\frac{d^{\prime} \left(n \alpha \Lt{n}-d\Ft{n}\right)}{(a-b)^2}.$$
  \item \label{Deriv:Lucas}$$\Lt{n}^{\prime}= \frac{ n d^{\prime}  \Ft{n}}{\alpha}.$$
\end{enumerate}
\end{theorem}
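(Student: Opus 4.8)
The plan is to reduce everything to the Binet formulas \eqref{bineformulauno} and \eqref{bineformulados} and differentiate term by term, so the first and central step is to obtain closed forms for $a^{\prime}$ and $b^{\prime}$. Because $g$ is constant, $ab=-g$ has zero derivative, which gives $a^{\prime}b+ab^{\prime}=0$; combined with $a^{\prime}+b^{\prime}=d^{\prime}$ coming from $a+b=d$, this linear system solves to
\[
a^{\prime}=\frac{a\,d^{\prime}}{a-b},\qquad b^{\prime}=\frac{-\,b\,d^{\prime}}{a-b}.
\]
(One can get the same thing by differentiating $a=\tfrac12(d+\sqrt{d^{2}+4g})$ and $b=\tfrac12(d-\sqrt{d^{2}+4g})$, using $a-b=\sqrt{d^{2}+4g}$ and $g^{\prime}=0$.) This is the only point where the hypothesis that $g$ is a constant is used. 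I expect this to be the main obstacle, not because the algebra is hard but because it requires being careful that the derivation makes sense: one works either over the interval where $d^{2}+4g>0$, so that $a$ and $b$ are genuine differentiable functions of $x$, or formally in the algebraic extension of $\mathbb{Q}(x)$ equipped with the unique extension of $d/dx$.

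Granting these two identities, Part \eqref{Deriv:Lucas} is immediate: differentiating $\alpha\Lt{n}=a^{n}+b^{n}$ and substituting gives
\[
\alpha\Lt{n}^{\prime}=n a^{n-1}a^{\prime}+n b^{n-1}b^{\prime}=\frac{n\,d^{\prime}}{a-b}\bigl(a^{n}-b^{n}\bigr)=n\,d^{\prime}\,\Ft{n},
\]
using \eqref{bineformulauno} in the last step, hence $\Lt{n}^{\prime}=n\,d^{\prime}\Ft{n}/\alpha$.

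For Part \eqref{Deriv:Fibo} I would apply the quotient rule to $\Ft{n}=(a^{n}-b^{n})/(a-b)$. The needed auxiliary computations are $a^{\prime}-b^{\prime}=\dfrac{(a+b)d^{\prime}}{a-b}=\dfrac{d\,d^{\prime}}{a-b}$ and $na^{n-1}a^{\prime}-nb^{n-1}b^{\prime}=\dfrac{n\,d^{\prime}}{a-b}\bigl(a^{n}+b^{n}\bigr)$; feeding these into the quotient rule and clearing one factor of $a-b$ collapses the numerator to
\[
\Ft{n}^{\prime}=\frac{n\,d^{\prime}(a^{n}+b^{n})-d\,d^{\prime}\,\Ft{n}}{(a-b)^{2}}=\frac{d^{\prime}\bigl(n\alpha\Lt{n}-d\,\Ft{n}\bigr)}{(a-b)^{2}},
\]
which is the second displayed formula. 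To recover the first, I would invoke the identity $\alpha\Lt{n}=\Ft{n+1}+g\,\Ft{n-1}$; this is Proposition \ref{Fibonacci:Lucas:Identities} Part \eqref{Fibonacci:Identities1} specialized to $q=r=1$ (and also follows directly from the Binet formulas, since $\Ft{n+1}+g\,\Ft{n-1}=\dfrac{a^{n+1}-b^{n+1}-ab(a^{n-1}-b^{n-1})}{a-b}=a^{n}+b^{n}$). Substituting $n\alpha\Lt{n}=n\,\Ft{n+1}+n g\,\Ft{n-1}$ into the numerator turns it into $n g\,\Ft{n-1}-d\,\Ft{n}+n\,\Ft{n+1}$, giving the first expression and completing the proof.
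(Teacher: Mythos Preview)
Your proof is correct and, at heart, follows the same strategy as the paper's---differentiate the Binet formulas after computing $a'$ and $b'$---but your execution is cleaner on two points worth noting. First, you keep $a$ and $b$ symmetric throughout, solving the $2\times 2$ linear system for $(a',b')$ directly; the paper instead eliminates $b$ via $b=-g/a$, rewrites $\Ft{n}$ entirely in terms of $a$, and then differentiates, which leads to bulkier intermediate expressions before the same endpoint. Second, you prove Part~\eqref{Deriv:Lucas} in one line by differentiating $\alpha\Lt{n}=a^{n}+b^{n}$ directly, whereas the paper deduces Part~\eqref{Deriv:Lucas} from Part~\eqref{Deriv:Fibo} by writing $\Lt{n}=(g\Ft{n-1}+\Ft{n+1})/\alpha$, applying the already-proved derivative formula to each term, and then invoking three further identities ($g\Lt{n-1}+\Lt{n+1}=(a-b)^{2}\Ft{n}/\alpha$, $\Lt{n+1}-g\Lt{n-1}=\alpha\Lt{n}\Lt{1}$, and $\alpha\Lt{1}=d$) to collapse the result. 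Your route avoids all of that. The only place the paper is marginally more explicit is in citing the identity $a^{n}+b^{n}=g\Ft{n-1}+\Ft{n+1}$ from \cite{florezMcanallyMuk}, whereas you derive it on the spot from Binet (and also correctly identify it as the $q=r=1$ case of Proposition~\ref{Fibonacci:Lucas:Identities}\eqref{Fibonacci:Identities1}); either justification is fine.
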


\begin{proof} We prove Part \eqref{Deriv:Fibo}.
From Binet formula \eqref{bineformulauno} and $b= -g/a$ we have $$\Ft{n}=\left(a^{n}-(-g)^na^{-n}\right)/\left(a-(-g)a^{-1}\right).$$
Differentiating $\Ft{n}$ with respect to $x$, using= $a-b= a+ga^{-1}$ and simplifying we have
\begin{equation}\label{eq:derivative}
\Ft{n}^{\prime}=\dfrac{n a^{\prime}\left(a^{n-1}+(-g)^{n}a^{-n-1} \right)}{(a-b)^2}-\dfrac{a^{\prime}(1-g a^{-2}) \left(a^{n}-(-g)^{n}a^{-n} \right)}{(a-b)^2}.
\end{equation}
Since $d=a+b$, and $b= -g/a$, we have $a^{\prime}+b^{\prime}=d^{\prime}$, and $b^{\prime}= g a^{-2} a^{\prime}$. These imply that
$$a^{\prime}=\frac{a d^{\prime}}{a+ga^{-1}} \quad \text{ and }\quad 1-g a^{-2}=\frac{d}{a}.$$
Substituting these results in \eqref{eq:derivative} and simplifying  we have
$$\Ft{n}^{\prime}=\dfrac{n a d^{\prime} \left(a^{n-1}+(-g)^{n}a^{-n-1} \right)}{(a-b)^2}-\dfrac{d \cdot d^{\prime}}{(1+g a^{-1})^2}
\frac{ \left(a^{n}-(-g)^{n}a^{-n} \right)}{(a-b)}.$$
Thus,
$$\Ft{n}^{\prime}=\dfrac{n d^{\prime} \left(a^{n}+b^{n} \right)}{(a-b)^2}-\dfrac{d \cdot d^{\prime}}{(a-b)^2}
\frac{ \left(a^{n}-(-g)^{n}a^{-n} \right)}{(a-b)}.$$
It is known that (see for example \cite{florezMcanallyMuk}) $a^n+b^n=g\Ft{n-1}+\Ft{n+1}$. So,
$$\Ft{n}^{\prime}=\dfrac{n d^{\prime} \left(g\Ft{n-1}+\Ft{n+1} \right)-d \cdot d^{\prime}\Ft{n} }{(a-b)^2}.$$
This completes the proof of Part \eqref{Deriv:Fibo}.

We now prove Part \eqref{Deriv:Lucas}. From \cite{florezMcanallyMuk} we know that $\Lt{n}= \left( g \Ft{n-1}+\Ft{n+1}\right)/\alpha$.
Differentiating $\Lt{n}$ with respect to $x$, we have (recall that $g$ is constant)
$\Lt{n}^{\prime}= \left( g \Ft{n-1}^{\prime}+\Ft{n+1}^{\prime}\right)/\alpha$. This and Part \eqref{Deriv:Fibo} imply that
$$\Lt{n}^{\prime}= \frac{g d^{\prime} \left((n-1) \alpha \Lt{n-1}-d\Ft{n-1}\right)}{\alpha (a-b)^2} +\frac{d^{\prime} \left((n+1) \alpha \Lt{n+1}-d\Ft{n+1}\right)}{\alpha (a-b)^2}. $$
Simplifying we have
$$\Lt{n}^{\prime}= \frac{d^{\prime}}{\alpha (a-b)^2} \left( (n-1) \alpha g \Lt{n-1} + (n+1) \alpha \Lt{n+1}-d \alpha \frac{g\Ft{n-1} +\Ft{n+1}}{\alpha}\right). $$
This and $\Lt{n}= \left( g \Ft{n-1}+\Ft{n+1}\right)/\alpha$ imply that
$$\Lt{n}^{\prime}= \frac{d^{\prime} \left( (n-1) g \Lt{n-1} + (n+1) \Lt{n+1}-d \Lt{n} \right)}{ (a-b)^2} . $$
Therefore,
\begin{equation}\label{Lucas:derivative:short}
\Lt{n}^{\prime}= \frac{d^{\prime} \left( n (g \Lt{n-1} + \Lt{n+1})+( \Lt{n+1}-g \Lt{n-1} +)-d \Lt{n} \right)}{ (a-b)^2} .
\end{equation}
From \cite{florezMcanallyMuk} we know that $$ g \Lt{n-1} + \Lt{n+1}= (a-b)^2 \Ft{n}/\alpha, \quad  \Lt{n+1}-g \Lt{n-1}= \alpha \Lt{n}\Lt{1}, \quad\text{ and } \quad \alpha \Lt{1}-d=0.$$
Substituting these identities in   \eqref{Lucas:derivative:short} completes the proof.
\end{proof}

\begin{table} [!ht]
\begin{center}\scalebox{1}{
\begin{tabular}{|l|l||l|l|} \hline
 			Fibonacci-type	 	 &  Derivative           	&Lucas-Type   		&  Derivative           \\ \hline \hline
			Fibonacci            	 & $\frac{d(F_{n})}{dx}=\frac{n D_{n}-xF_{n}}{4+x^2}$  & Lucas               	& $\frac{d(D_{n})}{dx}=nF_{n}$         \\[3pt]
 			Pell		         	 & $\frac{d(P_{n})}{dx}=\frac{n Q_{n}-2xP_{n}}{2(1+x^2)}$	& Pell-Lucas-prime   & $ \frac{d(Q_{n})}{dx}=2nP_{n}$       \\[3pt]
 			Fermat		         & $\frac{d(\Phi_{n})}{dx}=\frac{3(n\vartheta_{n}-3x \Phi_{n})}{-8 + 9 x^2}$ & Fermat-Lucas        & $\frac{ d(\vartheta_{n})}{dx}=3n\Phi_{m}$  \\[3pt]
 			Chebyshev 2nd kind  & $\frac{d(U_{n})}{dx}=\frac{2 n T_{n} -2x U_{n}}{2 \left(x^2-1\right)}$   & Chebyshev 1st kind  & $\frac{ d (T_{n})}{dx}=n U_{m}$\\[3pt]
 			Morgan-Voyce           &$  \frac{d(B_{n})}{dx}=\frac{n C_{n}-(x+2)B_{n}}{x (x+4)}$   & Morgan-Voyce        &$\frac{ d (C_{n})}{dx}=n B_{m}$    \\[3pt] \hline 
\end{tabular}}
\end{center}
\caption{Derivatives of GFP using Theorem \ref{Gen:Falcon:Plaza}.} \label{Deriv:Gen:Falcon:Plaza}
\end{table}

\section{Proofs of main results about the discriminant}

Recall that one of the expressions for the discriminant of a polynomial $f$ is given by
$\dis(f)=(-1)^{n(n-1)/2}a^{-1}\ress{f}{f^{\prime}}$ where $a=\lc(f)$, $n=\deg(f)$ and $f^{\prime}$ the derivative of $f$.

\begin{lemma}\label{lemma1:Disc:Mod:d24g}
	For  $n\in \mathbb{Z}_{\ge 0}$ this holds
	$$
	\Ft{n}\bmod{d^2 +4g}\equiv  \begin{cases}n(-g)^{(n-1)/2} & \mbox{if $n$  is odd, }\\
					   (-1)^{(n+2)/2}\left(ndg^{(n-2)/2}\right)/2 & \mbox{if  $n$  is even.}
	\end{cases}
	$$
\end{lemma}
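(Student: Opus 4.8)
The plan is to prove the congruence by strong induction on $n$, the engine being the defining recurrence \eqref{Fibonacci;general:FT} together with the single reduction $d^2 \equiv -4g \pmod{d^2+4g}$. First I would rewrite the two branches in the parametrization $n = 2k+1$ and $n = 2k$, so that the asserted right‑hand sides become $(2k+1)(-g)^k$ in the odd case and $(-1)^{k+1}k\,d\,g^{k-1}$ in the even case; this clears the division by $2$ and makes the sign bookkeeping transparent.

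Next I would dispose of the base cases $n = 0,1,2,3$ by direct computation: $\Ft{0}=0$, $\Ft{1}=1$, $\Ft{2}=d$, and $\Ft{3}=d^2+g\equiv -3g$, each of which matches the formula. Taking the first four values as the basis also guarantees that in the inductive range $n\ge 4$ only nonnegative powers of $g$ are ever invoked, sidestepping the cosmetic issue that the even formula nominally contains $g^{(n-2)/2}$.

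For the inductive step I would split on the parity of $n\ge 4$. If $n = 2k+1$, then $n-1$ is even and $n-2$ is odd, so \eqref{Fibonacci;general:FT} and the induction hypothesis give $\Ft{n} \equiv (-1)^{k+1}k g^{k-1} d^2 + (2k-1)(-1)^{k-1}g^k$; substituting $d^2\equiv -4g$ and factoring out $(-1)^k g^k$ collapses this to $(-1)^k g^k\big(4k-(2k-1)\big) = (2k+1)(-g)^k$. If $n = 2k$, then $n-1$ is odd and $n-2$ is even, and the same two steps yield $d g^{k-1}(-1)^{k-1}\big((2k-1)-(k-1)\big) = (-1)^{k+1}k\,d\,g^{k-1}$, as wanted.

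The only point demanding care is tracking the parity of the powers of $-1$ across the substitution $d^2\equiv -4g$, which introduces an extra factor $-1$; beyond that the argument is purely mechanical and I foresee no real obstacle. One could alternatively try to read the result off the Binet formula \eqref{bineformulauno} by working modulo $(a-b)^2$ in a suitable extension, but the induction above is shorter and stays inside $\mathbb{Q}[x]$.
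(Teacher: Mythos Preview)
Your proposal is correct and follows essentially the same approach as the paper: both argue by induction on $n$ using the recurrence \eqref{Fibonacci;general:FT} together with the reduction $d^2\equiv -4g$, splitting on parity. Your reparametrization $n=2k,\,2k+1$ and the choice to carry four base cases $n=0,1,2,3$ make the sign and exponent bookkeeping a bit cleaner than in the paper (and legitimately avoid the formal $g^{-1}$), but the underlying argument is identical.
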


\begin{proof} We use mathematical induction.  Let $S(k)$ be statement:
$$
	\Ft{k}\bmod{d^2 +4g}\equiv 
	\begin{cases}(-1)^{(k-1)/2} k g^{(k-1)/2} & \mbox{if $k$  is odd, }\\
										 (-1)^{(k+2)/2} \left(kdg^{(k-2)/2}\right)/2 & \mbox{if  $k$  is even.}
	\end{cases}
$$
Since $\Ft{1}=1$ and $\Ft{2}=d$, we have $S(1)$ and $S(2)$ are true. Suppose that the statement is true for some $k=n-1$ and $k=n$.
Thus, suppose that $S(n-1)$ and  $S(n)$ are true and we prove $S(n+1)$.
We consider two cases on the parity of $n$.

{\bf Case $n$ even.} Recall that  $ \Ft{n+1}=d\Ft{n}+g\Ft{n-1}$. This and $S(n-1)$ and  $S(n)$ (with $n$ even and $n-1$ odd) imply that
$\Ft{n+1}\equiv (-1)^{(n+2)/2} \left(nd^2g^{(n-2)/2}/2\right)+ (n-1)(-g)^{(n-2)/2}g\bmod{d^2+4g}$. Simplifying
\[
 \Ft{n+1}\equiv (-1)^{(n+2)/2}\frac{nd^2g^{(n-2)/2}}{2}+ (2n-(n+1))(-1)^{(n-2)/2}g^{n/2}\bmod{d^2+4g}.
\]
It is easy to see that
\[
(-1)^{(n+2)/2}\frac{nd^2g^{(n-2)/2}}{2}+ 2n(-1)^{(n-2)/2}g^{n/2}= (-1)^{(n+2)/2}\frac{ng^{(n-2)/2}}{2}\left( d^2+4g\right).
\]
Thus,
\[ 
\Ft{n+1} \equiv (-1)^{(n+2)/2}\frac{ng^{(n-2)/2}}{2}\left( d^2+4g\right)+(n+1)(-g)^{n/2} \bmod {d^2+4g}.
\]
This implies that  $\Ft{n+1}\equiv (n+1)(-g)^{n/2}\bmod {d^2+4g}$.

{\bf Case $n$ odd.}  $S(n-1)$ and  $S(n)$ (with $n$ odd and $n-1$ even) and $ \Ft{n+1}=d\Ft{n}+g\Ft{n-1}$, imply that
\begin{eqnarray*}
 \Ft{n+1}	&\equiv &  n(-g)^{(n-1)/2}d+(-1)^{(n+1)/2}\left(\frac{(n-1)dg^{(n-3)/2}}{2}\right)g  \bmod{d^2+4g}\\
 		&\equiv & d g^{(n-1)/2}\left(\frac{(-1)^{(n-1)/2}2n-(-1)^{(n-1)/2}(n-1)}{2} \right) \bmod{d^2+4g}\\
		&\equiv & \frac{(-1)^{(n+3)/2}(n+1)d g^{(n-1)/2}}{2} \mod{d^2+4g}.\end{eqnarray*}
This completes the proof.
\end{proof}

\begin{lemma} \label{Disc:Lemma18}
If $n\in \mathbb{Z}_{\ge 0}$, then
$
\ress{(a-b)^2}{\Ft{n}}=(\beta^{2\eta-\omega}\rho)^{(n-1)}n^{2\eta}.
$
\end{lemma}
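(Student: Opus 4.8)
The plan is to compute $\ress{(a-b)^2}{\Ft{n}}$ by using the product-over-roots formula for the resultant together with the congruence information supplied by Lemma \ref{lemma1:Disc:Mod:d24g}. Note that $(a-b)^2 = d^2+4g$ as a polynomial in $x$, so this is really the resultant $\ress{d^2+4g}{\Ft{n}}$. First I would record the basic data: $\deg(d^2+4g) = 2\eta$ (since $\deg(d)>\deg(g)$, the leading term of $d^2+4g$ is $\beta^2 x^{2\eta}$, so $\lc(d^2+4g)=\beta^2$), while $\deg(\Ft{n})=\eta(n-1)$ and $\lc(\Ft{n})=\beta^{n-1}$ by Lemma \ref{Propierties:FT} Part \eqref{degft}.

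The key identity is the evaluation formula
\[
\ress{d^2+4g}{\Ft{n}} = \lc(d^2+4g)^{\deg(\Ft{n})}\prod_{j}\Ft{n}(\xi_j) = \beta^{2\eta(n-1)}\prod_{j}\Ft{n}(\xi_j),
\]
where $\{\xi_j\}$ are the $2\eta$ roots of $d^2+4g$; equivalently, using symmetry (Lemma \ref{Properties:res} Part \eqref{Rbasic-1}) and the reverse formula,
\[
\ress{d^2+4g}{\Ft{n}} = (-1)^{2\eta\cdot\eta(n-1)}\,\lc(\Ft{n})^{\deg(d^2+4g)}\prod_{k=1}^{2\eta}\bigl(d^2+4g\bigr)(\theta_k)
\]
over the roots $\theta_k$ of $\Ft{n}$. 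I expect the cleaner route is the first one: evaluate $\Ft{n}$ at the roots of $d^2+4g$. Lemma \ref{lemma1:Disc:Mod:d24g} tells us $\Ft{n}\equiv c_n\pmod{d^2+4g}$ for an explicit element $c_n$ of $\mathbb{Q}[x]$ (namely $n(-g)^{(n-1)/2}$ when $n$ is odd, and $(-1)^{(n+2)/2}ndg^{(n-2)/2}/2$ when $n$ is even), so $\Ft{n}(\xi_j) = c_n(\xi_j)$ for every root $\xi_j$ of $d^2+4g$. Hence
\[
\prod_j \Ft{n}(\xi_j) = \prod_j c_n(\xi_j) = \lc(d^2+4g)^{-\deg(c_n)}\ress{d^2+4g}{c_n},
\]
which converts the problem into computing $\ress{d^2+4g}{c_n}$ — a resultant of $d^2+4g$ against a monomial-type expression in $d$ and $g$.

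From there the computation splits along the parity of $n$ but should collapse to the same closed form. In the odd case $c_n = n(-g)^{(n-1)/2}$, so by Lemma \ref{Properties:res} Parts \eqref{Rbasic-2}, \eqref{Rbasic-2add} and the constant rule $\ress{k}{f}=k^{\deg f}$, one gets a power of $n$ times a power of $\ress{d^2+4g}{g}$; and $\ress{d^2+4g}{g}$ reduces (mod $g$, $d^2+4g\equiv d^2$) to a power of $\ress{d}{g}$, i.e.\ to a power of $\rho$, with an appropriate $\beta$-power bookkeeping coming from the degree differences. In the even case $c_n$ carries an extra factor of $d$, contributing an extra $\ress{d^2+4g}{d}$, which similarly reduces mod $d$ (there $d^2+4g\equiv 4g$) to a $\rho$-power times powers of $4$ and $\beta$; the factor $1/2$ in $c_n$ and the $4$'s are designed to cancel, leaving no stray $2$'s. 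Assembling the $\beta$-exponents, the $\rho$-exponents and the $n$-exponent should yield exactly $(\beta^{2\eta-\omega}\rho)^{n-1}n^{2\eta}$.

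The main obstacle will be the exponent bookkeeping: correctly tracking the powers of $\beta$ that arise from each application of Lemma \ref{Properties:res} Part \eqref{Rbasic-3}-type degree corrections and from the $\lc(\cdot)^{\deg(\cdot)}$ prefactors in the evaluation formula, and verifying that the two parity cases genuinely produce the same answer (in particular that all factors of $2$ cancel in the even case). A secondary technical point is justifying $\Ft{n}(\xi_j)=c_n(\xi_j)$ rigorously — this is immediate since $d^2+4g$ divides $\Ft{n}-c_n$ in $\mathbb{Q}[x]$, so every root of $d^2+4g$ is a root of $\Ft{n}-c_n$ — but one must also handle the possibility of repeated roots of $d^2+4g$ by working with the resultant identity $\ress{f}{g_1 g_2}=\ress{f}{g_1}\ress{f}{g_2}$ and $\ress{f}{g_1}=\ress{f}{g_2}$ when $g_1\equiv g_2\pmod f$ and $\deg g_1=\deg g_2$, rather than literally plugging in roots. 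I would state that congruence-invariance of the resultant as a small auxiliary observation (it follows from Part \eqref{Rbasic-3} with $q$ chosen so that $f q + c_n = \Ft{n}$, noting $\deg\Ft{n}$ versus $\deg c_n$) and then let the parity computations run.
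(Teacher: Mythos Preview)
Your proposal is correct and follows essentially the same route as the paper: both arguments use Lemma \ref{lemma1:Disc:Mod:d24g} to replace $\Ft{n}$ by its remainder $c_n$ modulo $(a-b)^2=d^2+4g$ via Lemma \ref{Properties:res} Part \eqref{Rbasic-3}, then split on the parity of $n$ and reduce to the elementary resultants $\ress{d^2+4g}{g}$ and $\ress{d^2+4g}{d}$. Your initial product-over-roots framing is a detour you yourself flag and replace by the congruence-invariance property (Part \eqref{Rbasic-3}); once you do that, the two proofs are the same computation.
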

\begin{proof} From \cite{florezMcanallyMuk} we know that  $(a-b)^2=d^2+4g$. This and Lemma \ref{lemma1:Disc:Mod:d24g} imply that there is a
polynomial $T$ such that
\begin{equation}\label{Formula:Lemma18}
\Ft{n}= \begin{cases}(a-b)^2T+n(-g)^{(n-1)/2} & \mbox{if $n$ is odd, }\\
(a-b)^2T+(-1)^{(n+2)/2}2^{-1}dg^{(n-2)/2}n & \mbox{if $n$ is even.}
	\end{cases}
\end{equation}
Using Lemma \ref{Properties:res} Parts \eqref{Rbasic-1}, \eqref{Rbasic-2add} and \eqref{Rbasic-3} and simplifying we have
\begin{equation}\label{ident20}
\ress{d^2+4g}{g^m}= \ress{d^2+4g}{g}^m =\ress{d^2+4g}{g}^m =(\lambda^{2\eta-2\eta}\ress{g}{d}^2)^m =\rho^{2m}.
\end{equation}
To find $\ress{(a-b)^2}{\Ft{n}}$ we consider two cases, depending on the parity of $n$.

{\bf Case $n$ is even.}  From \eqref{Formula:Lemma18} we have
$$\ress{(a-b)^2}{\Ft{n}}=\ress{(a-b)^2}{(a-b)^2T+(-1)^{(n+2)/2}2^{-1}dg^{(n-2)/2}n }.$$
This and Lemma \ref {Properties:res} Parts \eqref{Rbasic-1}, \eqref{Rbasic-2} and \eqref{Rbasic-3} imply that
\begin{eqnarray*}
\ress{(a-b)^2}{\Ft{n}} &=&\beta^{(2\eta-\omega)(n-2)}\ress{(a-b)^2}{(-1)^{(n+2)/2}2^{-1}n}\ress{(a-b)^2}{dg^{(n-2)/2}}\\
                      &=&\beta^{(2\eta-\omega)(n-2)}(2^{-1}n)^{2\eta}\ress{(a-b)^2}{d}\ress{(a-b)^2}{g^{(n-2)/2}}\\
                      &=&\beta^{(2\eta-\omega)(n-2)}(2^{-1}n)^{2\eta}\ress{d}{d^2+4g}\ress{(a-b)^2}{g^{(n-2)/2}}.
\end{eqnarray*}
 Using similar analysis as in  \eqref{ident20} we have $\ress{(a-b)^2}{g^{(n-2)/2}}=\rho^{n-2}$.
 It  is easy to see that $\ress{(a-b)^2}{d}=\ress{d}{d^2+4g}=\beta^{2n-\omega}2^{\eta}$. Therefore,
$$
\ress{(a-b)^2}{\Ft{n}} =\beta^{(2\eta-\omega)(n-1)}n^{2\eta}\rho\rho^{n-2} =(\beta^{2\eta-\omega}\rho)^{(n-1)}n^{2\eta}.
$$

{\bf Case $n$ is odd.} From \eqref{Formula:Lemma18} we have
$$\ress{(a-b)^2}{\Ft{n}}=\ress{(a-b)^2}{(a-b)^2T+(-1)^{(n+2)/2}2^{-1}dg^{(n-2)/2}n }.$$
This and Lemma \ref {Properties:res} Parts \eqref{Rbasic-1}, \eqref{Rbasic-2} and \eqref{Rbasic-3} imply that
\begin{eqnarray*}
\ress{(a-b)^2}{\Ft{n}}
&=&(\beta^2)^{\eta(n-1)-\omega(n-1)/2}\ress{d^2+4g}{n(-g)^{(n-1)/2}}\\
&=&\beta^{(2\eta-\omega)(n-1)}n^{2\eta}\ress{d^2+4g}{g^{(n-1)/2}}\\
&=&(\beta^{2\eta-\omega}\rho)^{(n-1)}n^{2\eta}.
\end{eqnarray*}
This completes the proof.
\end{proof}

\subsection{Proof of Theorems \ref{Disc:Fibonacci:type} and \ref{Disc:Lucas:type}}\label{Proof:Disc:FT}
We now prove the last two main results.

\begin{proof}[Proof of Theorem \ref{Disc:Fibonacci:type}] From Theorem \ref{Gen:Falcon:Plaza} we have $$\ress{\Ft{n}}{(a-b)^2\Ft{n}^{\prime}}=\ress{\Ft{n}}{d^{\prime}(n\alpha \Lt{n}-d\Ft{n})}.$$
Since $\deg(d)=1$, we have that $d^{\prime}$ is a constant. (Recall that when $\Ft{n}$ and $\Lt{n}$ are together in a resultant, they are equivalent.) Therefore,
$\ress{\Ft{n}}{(a-b)^2\Ft{n}^{\prime}} =(d^{\prime})^{n-1}\ress{\Ft{n}}{n\alpha \Lt{n}-d\Ft{n}}$.  Since $\deg(\Ft{n})=\eta(n-1)$ and $\deg(\Lt{n})=\eta(n)$, we have
$\ress{\Ft{n}}{n\alpha \Lt{n}-d\Ft{n}}=\ress{\Ft{n}}{n\alpha\Lt{n}}$. So, $\ress{\Ft{n}}{(a-b)^2\Ft{n}^{\prime}}=(\alpha  d^{\prime} n)^{n-1}\ress{\Ft{n}}{\Lt{n}}
=(\alpha  d^{\prime} n)^{n-1}\ress{\Lt{n}}{\Ft{n}}$. This and Theorem \ref{Main:3:thm} imply that
\begin{equation}\label{Half:formula:Disc}
\ress{\Ft{n}}{(a-b)^2\Ft{n}^{\prime}}
=(\alpha  d^{\prime} n)^{n-1} 2^{n-1} \alpha^{1-n}\big(\beta^{2}\rho\big)^{n(n-1)/2}\\
=(2d^{\prime} n)^{n-1} \big(\beta^{2}\rho\big)^{n(n-1)/2}.
\end{equation}
On the other hand, from Lemma \ref{Disc:Lemma18} and the fact that  $\deg(a-b)^2$ is even we have
$$
\ress{\Ft{n}}{(a-b)^2\Ft{n}^{\prime}}=\ress{\Ft{n}}{(a-b)^2}\ress{\Ft{n}}{\Ft{n}^{\prime}}=n^{2}(\beta^{2}\rho)^{(n-1)}\ress{\Ft{n}}{\Ft{n}^{\prime}}.
$$
This and \eqref{Half:formula:Disc} imply that
$$
\ress{\Ft{n}}{\Ft{n}^{\prime}}=\dfrac{(2d^{\prime} n)^{n-1} \big(\beta^{2}\rho\big)^{n(n-1)/2}}{n^{2}(\beta^{2}\rho)^{(n-1)}}=n^{n-3}(2d^{\prime})^{n-1}(\beta^{2}\rho)^{(n-1)(n-2)/2}.
$$
Therefore,
$$
\dis{\Ft{n}}=(-1)^{\frac{(n-1)(n-2)}{2}}\beta^{1-n}\ress{\Ft{n}}{\Ft{n}^{\prime}}=\beta^{1-n}
n^{n-3}(2d^{\prime})^{n-1}(-\beta^{2}\rho)^{(n-1)(n-2)/2}.
$$
This completes the proof.
\end{proof}

\begin{proof} [Proof of Theorem \ref{Disc:Lucas:type}]  From the definition of the discriminant we have
$$\dis{\Lt{n}}=(-1)^{n( n-1)/2}\alpha\beta^{-n}\ress{\Lt{n}}{\Lt{n} ^{\prime}}.$$ This and
Theorem \ref{Gen:Falcon:Plaza} imply that $\dis{\Lt{n}}=(-1)^{n( n-1)/2}\alpha\beta^{-n}\ress{\Lt{n}}{(nd^{\prime}\Ft{n})/{\alpha}}$.
Since $(nd^{\prime})/{\alpha}$ is a constant,  $\dis{\Lt{n}}=(-1)^{n( n-1)/2}\alpha\beta^{-n} (nd^{\prime}/{\alpha})^{ n} \ress{\Lt{n}}{\Ft{n}}$.
This and Theorem \ref{Main:3:thm} imply that
\begin{eqnarray*}
	\dis{\Lt{n}}
    &=&(-1)^{\frac{ n( n-1)}{2}}\alpha\beta^{-n}\Big(\frac{nd^{\prime}}{\alpha}\Big)^{ n}   2^{n-1} \alpha^{1-n}\big(\beta^{2}\rho\big)^{(n(n-1))/2}\\
    &=&\beta^{n(n-2)}\Big(nd^{\prime}\Big)^{ n}   2^{n-1} \alpha^{2-2n}\big(-\rho\big)^{(n(n-1))/2}.
\end{eqnarray*}
Completing the proof.
\end{proof}

{\bf Open question.} In this paper we did not investigate the case $\deg(g)\ge \deg(d)$. This property is satisfied by Jacobsthal polynomials. 

\section{Acknowledgement}

The first author was partially supported by Grant No 344524, 2018; The Citadel Foundation, Charleston SC.

%\noindent  MSC 2010:
%Primary 11B39; Secondary 11B83.

\end{document}